\def\arXiv#1{\href{http://arxiv.org/abs/#1}{arXiv:#1}}
\def\?[#1]{\textbf{[#1]}\marginpar{\Large{\textbf{??}}}}
\def\smallsection#1{\smallskip\noindent\textbf{#1}.}
\let\epsilon=\varepsilon 
\newcommand{\RR}{{\mathbb R}}
\newcommand{\E}{{\mathbb E}}
\newtheorem{theo}{Theorem}
\newtheorem{prop}{Proposition}[section]	
\newtheorem{defi}[prop]{Definition}
\newtheorem{ass}{Assumption}
\newtheorem{ex}{Example}
\newtheorem{lemm}[prop]{Lemma}
\newtheorem{corr}[prop]{Corollary}
\newtheorem{rem}{Remark}
\numberwithin{equation}{section}
\let\Re=\Real
\DeclareMathOperator{\tr}{tr}
\def\indic{\operatorname{1\hskip-2.75pt\relax l}}
\newcommand\reallywidehat[1]{\arraycolsep=0pt\relax%
\begin{array}{c}
\stretchto{
  \scaleto{
    \scalerel*[\widthof{\ensuremath{#1}}]{\kern-.5pt\bigwedge\kern-.5pt}
    {\rule[-\textheight/2]{1ex}{\textheight}} 
  }{\textheight} %
}{0.5ex}\\           
#1\\                 
\rule{-1ex}{0ex}
\end{array}
}
\title[Model reduction of controlled stochastic dynamics]{Error bounds for model reduction of feedback-controlled linear stochastic dynamics on Hilbert spaces}
\author{Simon Becker}
\email{simon.becker@damtp.cam.ac.uk}
\address{DAMTP, University of Cambridge, Wilberforce Road, Cambridge CB3 0WA, UK}
\author{Carsten Hartmann}
\email{carsten.hartmann@b-tu.de}
\address{Institute for Mathematics, Brandenburgische Technische Universit\"at, Konrad-Wachsmann-Allee 1, 03046 Cottbus, Germany}
\author{Martin Redmann}
\email{martin.redmann@mathematik.uni-halle.de}
\address{Martin Luther University Halle-Wittenberg, Institute for Mathematics,
Theodor-Lieser-Str.  5, D-06120 Halle (Saale), Germany}
\author{Lorenz Richter}
\email{lorenz.richter@gmail.com}
\address{Institute of Mathematics, Freie Universit\"at Berlin, Berlin; Institute for Mathematics, Brandenburgische Technische Universit\"at, Konrad-Wachsmann-Allee 1, 03046 Cottbus, Germany}
\begin{document}

\begin{abstract}
We analyze structure-preserving model order reduction methods for Ornstein-Uhlenbeck processes and linear S(P)DEs with multiplicative noise based on balanced truncation. {For the first time, we include in this study the analysis of non-zero initial conditions. We moreover allow for feedback-controlled dynamics for solving stochastic optimal control problems with reduced-order models and prove novel error bounds for a class of linear quadratic regulator problems. We provide numerical evidence for the bounds and discuss the application of our approach to enhanced sampling methods from non-equilibrium statistical mechanics.}
\end{abstract}

\maketitle


\section{Introduction}
{
In this paper, we consider optimal control problems for Ornstein-Uhlenbeck processes and linear stochastic systems with multiplicative noise in a Hilbert space setting. These (abstract) equations describe stochastic partial differential equations (SPDEs) or high dimensional stochastic differential equations (SDEs) representing spatial discretizations of underlying SPDEs. Since optimal control problems in large (or infinite) dimensions often require high computational effort, thereby rendering practical applications infeasible, we resort to model order reduction (MOR) techniques. Here, the key idea is to identify low-order approximations capturing the dynamics of the originally large-scale systems such that, subsequently, the optimal control problem can be solved in the dimensionally reduced setting in which the complexity is lower or in which algorithms can be applied that would not be feasible in the original framework.}

{Even though MOR of linear and bilinear control systems is often justified by the incentive to reduce the computational burden associated with solving optimal control problems, they are usually not designed for approximating feedback-control problems. Most of the standard techniques like Gramian-based (balanced) MOR \cite{CG,BTsurvey}, proper orthogonal decomposition \cite{POD,Rowley}, or interpolation-based MOR \cite{SdS,H2survey} are \emph{open-loop} methods. Therefore, most of the error analysis focuses on worst-case error bounds (if any) for a certain class of (e.g. square-integrable) admissible controls. The specifics of the control or the cost functional are not taken into account for the identification of the relevant subspace or the bounds on the approximation error, which explains that the typical error bounds, such as the Hankel norm or $L^\infty$-error bounds of balanced truncation, are rather conservative when applied to feedback control, i.e.~\emph{closed-loop} systems. Notable exceptions are linear quadratic Gaussian control (LQG) balancing methods \cite{LQGBT} and some of their more recent variants (e.g. \cite{LQGPS,LQGPHS}) that are based on balancing a pair of control and filter Riccati equations associated with an underlying linear quadratic regulator problem; other approaches include stochastic (Feynman-Kac or backward SDE) representations of the associated Hamilton-Jacobi-Bellman (HJB) equation \cite{HLZ,KNH18}.} 

{In this paper, we follow an alternative route to LGQ balancing or dynamic programming using stochastic representations of HJB equations and instead consider Gramian-based MOR with the goal to tighten the available open-loop error bounds. The motivation for this strategy is that the computational cost associated with solving Lyapunov equations for the Gramians rather than algebraic Riccati equations or HJB equations (e.g. using monotone finite difference schemes \cite{BS91} or deep learning \cite{NR21}) is greatly reduced. We should mention that for Ornstein-Uhlenbeck type systems with quadratic cost functionals, it is possible to reduce the corresponding HJB equations to Riccati equations, which further reduces the computational overhead of grid-based discretization schemes for HJB equations. Nevertheless, Lyapunov equations in infinite dimensions are relatively well-behaved, which cannot be said for the corresponding operator Riccati equations (e.g.~see \cite{OC05}), thereby further motivating our study.}

To fix ideas, let $(M_t)_{t\ge 0}$ be a square-integrable mean zero L\'evy process and let $(\mathcal F_t)_{t \ge 0}$ be its induced filtration. For control functions $u \in L^2_{\operatorname{ad}}(\Omega \times (0,T))$ with values in $\mathbb{R}^{m}$, we study the differential equations
\begin{subequations}
\label{eq:standardform}
\begin{eqnarray}
\label{eq:OU}
&\mathrm dZ^{\operatorname{ou}}_t = A Z^{\operatorname{ou}}_t \ \mathrm dt +Bu_t \ \mathrm dt+  K \ \mathrm dM_t\,, \quad Z^{\operatorname{ou}}_{t_0}=\xi\\
\label{eq:bil}
&\mathrm dZ^{\operatorname{lin}}_t = A Z^{\operatorname{lin}}_t \ \mathrm dt + Bu_t \ \mathrm dt+  N  Z^{\operatorname{lin}}_t  \ \mathrm dM_t\,,  \quad Z^{\operatorname{lin}}_{t_0}=\xi 
\end{eqnarray}
\end{subequations}
{for $t\in (t_0,T)$ on some separable Hilbert space $X$. We will mostly consider the case $t_0=0$, and throughout the paper we use the labels ``ou'' for  Ornstein-Uhlenbeck processes and ``lin'' for linear systems with multiplicative noise that are sometimes also referred to as ``bilinear'' in the literature  (e.g. \cite{BD11}).} In \eqref{eq:OU} the process is allowed to take values in $\mathbb{R}^d$, whereas in equation \eqref{eq:bil} the L\'evy process is assumed to be scalar\footnote{This assumption is only to simplify the notation in this article and an adaptation to multiple noise terms $\sum_{i=1}^{l} N_i  Z^{\operatorname{lin}}_t  \ \mathrm dM^{i}_t$ is straightforward.}. The precise assumptions we impose on the OU process \eqref{eq:OU} are stated in Section \ref{sec:stoch} and for equation \ref{ass:lin} in Section \ref{sec:bilstoch}. Most of the notation will be explained in Section \ref{sec:Not}. In the equations above $B: \mathbb R^m \rightarrow X$, is the linear \emph{input operator}. In general, we are interested in outputs $CZ_{t}$, where $C:X \rightarrow \mathcal H$ is the linear \emph{output operator}.

The study of controlled Ornstein-Uhlenbeck processes \eqref{eq:OU} is of great practical relevance and has various applications such as interest rates models \cite{V} or pair trading in mathematical finance \cite{ES}, and Langevin equations in physics \cite{K}. Such processes are also considered to model random perturbations of linear deterministic systems \cite{HNS}. 
Linear stochastic differential equations with multiplicative noise generalizes a dissipative geometric Brownian motion and has multiple applications in mathematical finance, where, most prominently, such equations describe stock prices in the Black-Scholes model \cite{H}. Examples involving SPDEs include stochastic variants of the linearized Navier-Stokes equations \cite{Donev2014}, stochastic polymer models \cite{Medina1989}, or the Kushner-Stratonovich equations of nonlinear filtering \cite{Bucy1965}. 

\subsection{Optimal control}\label{sec_opt_control}

For the optimal control problem associated to the equations in \eqref{eq:standardform}, we consider quadratic cost functions on a time horizon $T \in (0,\infty)$, given by
\begin{equation}
\begin{split}
\label{eq:energy2}
J_{\operatorname{LQR}}^{\text{ou}} (CZ^{\text{ou}},u,T)&:=  \frac{1}{T}\left(\left\lVert C Z^{\text{ou}} \right\rVert^2_{L^2(\Omega_T)}+ \langle u, R u \rangle_{L^2(\Omega_T)}\right)\\
J_{r}^{\text{lin}} (CZ^{\text{lin}},u,T)&:=  \left\lVert C Z^{\text{lin}} \right\rVert^2_{L^2_tL^r_{\omega}(\Omega_T)}+ \langle u, R u \rangle_{L^2(\Omega_T)} \text{ with }r \in (1,2].
\end{split}
\end{equation} 
(For the definition of the corresponding norms and scalar products, see Section \ref{sec:Not} below.)
{We consider only functionals of quadratic type, as they allow us to use an explicit representation of the optimal feedback control using LGQ theory, which is necessary to obtain our error bounds. Let us remark that the functionals in \eqref{eq:energy2} are defined slightly differently compared to some of the control applications appearing in the literature in order to acknowledge the fact that a stable OU dynamics with uniformly bounded second moment is not decaying, in contrast to a (mean-square) stable dynamics with multiplicative noise. For a fixed simulation time $T$, the regularization by $1/T$ of the first control functional can be omitted, however, it becomes necessary for an infinite simulation time. To be precise,} in case of $T=\infty$ we define for the Ornstein-Uhlenbeck process 
\[J_{\operatorname{LQR}}^{\text{ou}} (CZ^{\text{ou}},u,\infty) :=\limsup_{T \rightarrow \infty}J_{\operatorname{LQR}}^{\text{ou}} (CZ^{\text{ou}},u,T).  \]
In \eqref{eq:energy2}, $R$ is a (strictly) symmetric positive-definite matrix such that all eigenvalues of $R$ are strictly positive. 
{For the ease of notation, we suppress the explicit dependence of the cost in case of a finite time horizon, $T<\infty$, on the initial data $(t_0,\xi)$. In the case $T=\infty$ and under some suitable ergodicity (i.e.~stability and complete controllability) assumptions, the optimal cost after taking the infimum over the controls $u$ can be shown to be independent of the initial conditions \cite{BorkarBook}.}

\subsection{Model order reduction}
{
As mentioned above, MOR shall be applied in order to lower the complexity of the problem discussed in Section \ref{sec_opt_control}. We mainly focus on a Gramian-based approach called balanced truncation (BT). Gramians can be interpreted as algebraic structures that are constructed to identify less relevant directions in state equations such as \eqref{eq:OU} and \eqref{eq:bil} as well as redundant information in the quantity of interest $C Z_t$. Simultaneous diagonalization of these Gramians then allows to easily detect and truncate unimportant states in order to find an accurate reduced system.}

{It turns out that MOR of control systems is intimately related with MOR of non-zero initial conditions. Therefore a few remarks on the specifics of Gramian-based BT in connection with non-zero initial conditions are in order.}
\subsubsection{Deterministic systems}
BT is very popular in the context of linear and bilinear deterministic control systems, since it features computable error bounds and preserves many structural properties of the dynamics, such as stability or passivity (e.g.~\cite{SchildersBook}). Nevertheless, considered as an approximation tool for the Hankel operator that is underlying the system under consideration, it heavily relies on $L^{2}$-isometries and the fact that inputs and outputs are square-integrable functions on the positive reals \cite{Glover84}. With few exceptions (see~\cite{BGM,HRA,DHQ}), most of the available error bounds consider the dynamics under zero (or: homogeneous) initial conditions.  
This is somewhat surprising as, for example, the system-theoretic concepts of finite-time controllability and reachability make assertions about bounded measurable control inputs only and do not assume the initial condition to be zero (see, e.g. \cite[Sec.~4]{Casti85}). It is  possible to think of the initial conditions as an extra control input, however, the control input associated with the initial condition is a Dirac delta function, and as a consequence it is neither bounded nor square-integrable; the approach thus requires an appropriate regularization that then leads to Hankel norm error bounds that depend on the particular regularization chosen (see e.g. \cite{HRA}).

\subsubsection{Stochastic setting}
In this article, we follow a different route and extend the notion of the Hankel operator to account for the non-zero initial conditions by an appropriate shifting of the underlying reachability and observability Gramians. The details will be given below in Section \ref{sec:BTIAN}. {In doing so, we study balanced MOR methods for \eqref{eq:standardform} under non-zero initial states.} Reduced order models, based on BT, for (uncontrolled) Ornstein-Uhlenbeck processes $(Z^{\operatorname{ou}}_t)$ have been considered in \cite{FR2}; controlled processes $(Z^{\operatorname{lin}}_t)$ have been extensively studied within the standard stochastic BT framework and we refer the reader to \cite{BH19,BR15,BD11} and references therein for a general overview. {To our knowledge, non-zero initial conditions for equations like  \eqref{eq:bil} have in general not been considered in the BT MOR framework so far.}

{For stochastic control problems, for which the optimal policies are know to be Markovian feedback controls, the dependence of the controlled dynamics on the initial conditions is crucial \cite[Sec. III.7]{FlemingSoner}. In contrast to the deterministic case, dynamic programming, i.e. (approximately) solving HJB equations, or stochastic optimization methods are the methods of choice to compute optimal controls, and these methods rely on a careful treatment of the initial data. For example, the solution to the HJB equation, the value function, is a function of the initial conditions, and the optimal control can often be expressed in terms of the derivatives of the value function.}
%
%
{As we will detail below, we include the initial states in the MOR process by projecting them on an $L^2$-subspace that is spanned by the \emph{admissible initial states}, which guarantees that we can treat control and initial data on the same footing.} 


\subsubsection{Differences between \eqref{eq:OU} and \eqref{eq:bil}}
The treatment of OU processes and systems with multiplicative noise seems analogous and follows a similar guiding principle in our work, but has fundamental differences. While we consider the same noise processes $(M_t)$ for both equations, the assumptions on the considered dynamics are different. The assumption on the OU-type dynamics requires a strictly dissipative linear part, whereas we require {a slightly stronger stability condition} for the stochastic dynamics for systems with multiplicative noise. In case of OU processes, we work directly with the underlying semigroup, whereas for systems with multiplicative noise, it is the stochastic flow generated by the uncontrolled part, that takes on the fundamental position.
{What prevents us from putting the two dynamics \eqref{eq:OU} and \eqref{eq:bil} under the same umbrella are the different mathematical structures of the two equations, which force us to use different estimates. Specifically, we end up controlling different norms of the solution, even though we enforce the same square integrability condition on the controls. This is unavoidable, and it is owed to the fact that for the OU process with additive noise large randomness will induce a large norm, whereas for systems with multiplicative noise, the effect of the noise on the norm of the solution depends by the magnitude of the process itself.}

\subsection{Outline}

The rest of the article is organized as follows: Before presenting BT in a nutshell in Section \ref{sec:BTIAN}, Section \ref{sec:Not} briefly introduces the basic notation for this article. The OU semigroup and the corresponding model reduction error bound are discussed in Section \ref{sec:stoch}, whereas linear S(P)DEs with multiplicative noise are the subject of Section \ref{sec:bilstoch}. The OU and S(P)DE error bounds are then revisited from the perspective of optimal control theory in Section \ref{sec:OCT}, where we focus on linear quadratic regulator (LQR) problems.  Finally, in Section \ref{sec:num}, we illustrate the theoretical findings from Sections  \ref{sec:stoch}--\ref{sec:OCT} with suitable numerical examples.  

\subsection{Further notation}
\label{sec:Not}
The space of bounded linear operators between Banach spaces $X,Y$ is denoted by $\mathcal L(X,Y)$ and just by $\mathcal L(X)$ if $X=Y.$ The operator norm of a bounded operator $T \in \mathcal L(X,Y)$ is written as $\left\lVert T \right\rVert$. The trace-class and Hilbert-Schmidt operators between Hilbert spaces $X,Y$ are denoted by $\operatorname{TC}(X,Y)$ and $\operatorname{HS}(X,Y),$ respectively. In particular, we recall that for a linear operator $T \in \operatorname{TC}(X,Y)$, where $X$ and $Y$ are now separable Hilbert spaces, the trace norm is given as
\begin{equation}
\label{tracenorm}
\left\lVert T \right\rVert_{\operatorname{TC}}=\sup \left\{ \sum_{n \in \mathbb{N}} \left\lvert \langle f_n,T e_n  \rangle_Y \right\rvert : (e_n)_{n \in \mathbb{N}} \text{ ONB of } X \text{ and } (f_n)_{n \in \mathbb{N}} \text{ ONB of } Y \right\}.
\end{equation}
The Hilbert-Schmidt norm is given by
\begin{equation}
\label{hsnorm}
\left\lVert T \right\rVert_{\operatorname{HS}}:= \sqrt{\sum_{n,m \in \mathbb{N}} \left\lvert \langle f_m,T e_n  \rangle_Y \right\rvert^2} 
\end{equation}
where $(e_n)_{n \in \mathbb{N}}$ is any ONB of $X$ and $(f_n)_{n \in \mathbb{N}}$ any ONB of $Y.$

We say that $g=\mathcal O (f)$ if there is a $C>0$ such that $\left\lVert g \right\rVert \le C \left\lVert f \right\rVert.$ 
The domain of unbounded operators $A$ is denoted by $D(A).$

We write $\Delta(\Xi)$ to denote the difference of the quantity $\Xi$ for two systems, i.e.\@ $\Delta(\Xi) = \Xi_{\operatorname{System \ 1}}-\Xi_{\operatorname{System \ 2}}.$ We denote the expectation of a random variable $Y$ by $\mathbb E(Y)$ where we throughout the article assume to work on some fixed probability space $(\Omega,\mathcal F,\mathbb P).$ If we want to address an operator $L$ for both OU processes and linear systems with multiplicative noise, we write $L^{\operatorname{ou}\vert \operatorname{lin}}.$ 

We write $\Omega_T:=\Omega \times (0,T)$ and define, for a Banach space $Y$, the norm associated with the space $L^2(\Omega_T,Y)$
\begin{equation}
\label{eq:L2space}
\left\lVert f \right\rVert_{L^2(\Omega_T,Y)}:= \sqrt{\mathbb E \int_{(0,T)} \Vert f(t) \Vert^2_Y \, \mathrm dt }.
\end{equation}

When writing $L^p$ spaces, we most often omit the domain and sometimes also the image space to shorten the notation. 

We also define the norm on iterated $L^pL^q$ spaces by
\begin{equation}
\label{eq:lplq}
\Vert f \Vert_{L^p_xL^q_y}:= \Vert x \mapsto  \Vert y \mapsto f(x,y) \Vert_{L^q} \Vert_{L^p}\,,
\end{equation}
where the $L^{q}$ norm is taken over the second argument, $y$, followed by the $L^{p}$ norm integration over the first argument, $x$.  

We use the subscript $ad$ for $L^p$ spaces to denote stochastic processes in $L^p$ that are adapted to a canonical filtration.

The convolution of two functions is denoted by 
\[ (f*g)(x) = \int_{\mathbb R} f(x-y) g(y) \ \mathrm dy. \]

We write $\indic_X$ for the indicator function on some measurable set $X$, i.e. $\indic_X(x)=1$ if $x \in X$ and $0$ otherwise. 

If a sequence $(x_n)$ converges with respect to the weak topology of a Banach space to some element $x$ of that space, we write $x_n \rightharpoonup x.$

To include subspaces of relevant initial states in the MOR process, we define for an orthonormal family $\phi_i\in L^2(\Omega,X)$, the map $B_{\operatorname{in}}: \mathbb R^k \rightarrow X$ by $B_{\operatorname{in}}v:=\sum_{i=1}^k \left\langle v, \widehat{e_i} \right\rangle_{\mathbb R^k} \ \phi_i.$ Here,  $\operatorname{span}\left\{\phi_i; i \in \left\{1,..,k \right\}\right\}$ is the space of \emph{admissible initial states}. In other words, we define an operator $B_{\operatorname{in}}$ such that $B_{\operatorname{in}}B_{\operatorname{in}}^{*}$ is a projection onto the subspace of admissible initial states.

\section{Balanced truncation in a nutshell}
\label{sec:BTIAN}
In this article, we study MOR methods for equations \eqref{eq:standardform}. To fix ideas, let us for now assume that the underlying Hilbert space $X$ is finite-dimensional. In the first step of the MOR process, positive semidefinite observability and reachability  Gramians $\mathscr O^{\operatorname{ou}\vert \operatorname{lin}}$ and $\mathscr P^{\operatorname{ou}\vert \operatorname{lin}}$ are computed from Lyapunov equations, using an auxiliary operator $S:=B_{\operatorname{in}}B_{\operatorname{in}}^{*}+ K \mathbb E(M_1 M_1^{*}) K^{*}$. {We note that Gramians are the key ingredient of balancing-related MOR methods like BT, since from their eigenspaces, dominant subspaces of the underlying system can be extracted.}

For Ornstein-Uhlenbeck processes, for which we consider two types of reachability Gramians $ \mathscr P^{\operatorname{ou}}$ and $ \mathcal P^{\operatorname{ou}}$, the Lyapunov equations (Prop. \ref{eq:LyapOU}) take the form
\begin{equation}
\begin{split}
\label{eq:Lyapunoveq}
&A^{*}\mathscr O^{\operatorname{ou}} + \mathscr O^{\operatorname{ou}} A  + C^{*}C = 0,  \\
&A\mathscr P^{\operatorname{ou}}+ \mathscr P^{\operatorname{ou}}A^{*}+ BB^{*}+K \mathbb E(M_1 M_1^{*}) K^{*}=0 \text{ and }\\
&A( \mathcal P^{\operatorname{ou}}-S)+ ( \mathcal P^{\operatorname{ou}}-S)A^{*}+ BB^{*}=0.
\end{split}
\end{equation}

\begin{itemize}
\item The first reachability Gramian $\mathscr P^{\operatorname{ou}}$ is employed to obtain an error bound on the supremum norm with initial state $0$ (Theorem \ref{general_bound_same_u}), {which is the basis for a bound with general initial states relying on the same type of Gramian (Corollary \ref{L2-error-nonzero-initial-split})}.  

\item The second reachability Gramian $\mathcal P^{\operatorname{ou}}$ depends also on the chosen initial states and allows us to obtain an $L^2$ error bound (Theorem \ref{theo:stoch}). 
\end{itemize}

For linear systems with multiplicative noise they satisfy (see Prop. \ref{prop:Lyapbil} below)
\begin{equation}
\begin{split}
\label{eq:Lyapunoveq2}
&A(\mathscr P^{\operatorname{lin}}-B_{\operatorname{in}}B_{\operatorname{in}}^{*}) +(\mathscr P^{\operatorname{lin}}-B_{\operatorname{in}}B_{\operatorname{in}}^{*})A^{*} +  N (\mathscr P^{\operatorname{lin}}-B_{\operatorname{in}}B_{\operatorname{in}}^{*}) N^{*} \\
&\qquad + BB^{*}+B_{\operatorname{in}}B_{\operatorname{in}}^{*}=0 \text{ and }\\
&A^{*}\mathscr O^{\operatorname{lin}} + \mathscr O^{\operatorname{lin}} A + N^{*} \mathscr O^{\operatorname{lin}} N + C^{*}C = 0.
\end{split}
\end{equation}
Since both $\mathscr O^{\operatorname{ou}\vert \operatorname{lin}}$ and $\mathscr P^{\operatorname{ou}\vert \operatorname{lin}}$ are positive semidefinite, they can be decomposed as $\mathscr O^{\operatorname{ou}\vert \operatorname{lin}} = W^{*}W$ and $\mathscr P^{\operatorname{ou}\vert \operatorname{lin}} =RR^{*}.$
Let $\mathscr O^{\operatorname{ou}\vert \operatorname{lin}}$ and $\mathscr P^{\operatorname{ou}\vert \operatorname{lin}}$ have for simplicity full rank, the \emph{balanced representation} is obtained by first performing a singular value decomposition $WR = V \Sigma U^{*}$, to identify a dominant subspace for the dynamics of the system, where $V,U$ are unitary and $\Sigma$ is diagonal. {The diagonal entries of $\Sigma$ are called Hankel singular values of the system}. Then, we conjugate the system by operators $T:=\Sigma^{-1/2} V^{*}W$ and $T^{-1}:=RU\Sigma^{-1/2}$ such that 
\begin{equation}\label{blanced_realization}
\begin{split}
& A_b^{\operatorname{ou}\vert \operatorname{lin}} := TA^{\operatorname{ou}\vert \operatorname{lin}}T^{-1},\ B_b^{\operatorname{ou}\vert \operatorname{lin}}:=TB^{\operatorname{ou}\vert \operatorname{lin}}, \, C_b^{\operatorname{ou}\vert \operatorname{lin}}:=C^{\operatorname{ou}\vert \operatorname{lin}}T^{-1}, \text{ and }\\
& N_b^{\operatorname{lin}}:=TN^{\operatorname{lin}}T^{-1}, \  K_b^{\operatorname{ou}}:=TK^{\operatorname{ou}}.
\end{split}
\end{equation}
{The state space transformation in \eqref{blanced_realization} can be viewed as a procedure in which the Gramians $\mathscr O^{\operatorname{ou}\vert \operatorname{lin}}$ and $\mathscr P^{\operatorname{ou}\vert \operatorname{lin}}$ are simultaneously diagonalized. This is done because the Hankel singular values characterize the importance of associated state components as shown in other stochastic settings for additive noise \cite{FR2} and multiplicative noise \cite{redmannspa2}.
}
To obtain a \emph{reduced system} {by BT}, the operator $\Sigma$ is now approximated. This approximation is obtained by discarding the smallest singular values of $\Sigma$ { and only capturing the large ones since the corresponding state variables mainly determine the dynamics. }
Error bounds in this article are commonly expressed in terms of the difference of Hankel operators for the full and the reduced system. This difference of Hankel operators we denote by $\Delta(H).$ The Hankel operator is one possible decomposition $WR$ of the Gramians above. The precise definition of the Hankel operator is stated in Definitions \ref{defi:OUHankel}, for OU processes, and \ref{defi:bilHank}, for linear systems with multiplicative noise, respectively. However, to evaluate the trace norm difference it is not necessary to analyze the Hankel operator directly:
{
To evaluate the singular values of $\Delta(H)$, and thus the trace norm of $\Delta(H)$, we introduce an \emph{error system} 
\begin{equation}
\begin{split}
\label{eq:compsys}
&\widehat A^{\operatorname{ou}\vert \operatorname{lin}}:=\left(\begin{matrix} A & 0 \\ 0 & \widetilde{A} \end{matrix} \right),\ \widehat B^{\operatorname{ou}\vert \operatorname{lin}} := \left(\begin{matrix} B\\ \widetilde{B}\end{matrix}\right), \  \widehat B^{\operatorname{ou}\vert \operatorname{lin}}_{\text{in}}:= \left(\begin{matrix}B_{\text{in}}\\ \widetilde{B}_{\text{in}}\end{matrix}\right),  \ \widehat C^{\operatorname{ou}\vert \operatorname{lin}}:=\left(\begin{matrix}C & -\widetilde C\end{matrix}\right), \\
& \widehat N^{\operatorname{lin}}:=\left(\begin{matrix}N & 0 \\ 0& \widetilde N \end{matrix} \right),  \widehat K^{\operatorname{ou}}_{\text{in}}:= \left(\begin{matrix}K_{\text{in}}\\ \widetilde{K}_{\text{in}}\end{matrix}\right), \text{and state variable } \widehat Z_t^{\operatorname{ou}\vert \operatorname{lin}}:= \left(\begin{matrix} Z_t\\ \widetilde{Z}_t\end{matrix}\right),
\end{split}
\end{equation}
where operators/states without tilde belong to System $1$, as in \eqref{eq:standardform}, and with tilde to some System $2$. This second system could be any other system with the same structure such as the reduced system, e.g., resulting from applying BT. Certainly, the output of the error system is the error between the outputs of both systems.}
Then one can define Gramians $\widehat O=\widehat W^{*}\widehat W$ and $\widehat P=\widehat R \widehat R^{*}$ of this error system \eqref{eq:compsys} that satisfy Lyapunov equations \eqref{eq:Lyapunoveq} or \eqref{eq:Lyapunoveq2} for the error system, i.e.
\begin{equation}
\begin{split}
&\widehat A^{*}\widehat O^{\operatorname{ou}} + \widehat O^{\operatorname{ou}} \widehat A  + \widehat C^{*}\widehat C = 0,  \\
&\widehat A( \widehat P^{\operatorname{ou}}-\widehat S)+ ( \widehat P^{\operatorname{ou}}-\widehat S)\widehat A^{*}+ \widehat B\widehat B^{*}=0
\end{split}
\end{equation}
where $\widehat S:=\widehat B_{\operatorname{in}} \widehat B_{\operatorname{in}}^{*}+ \widehat K \mathbb E(M_1 M_1^{*})\widehat K^{*}$ and analogously for linear systems with multiplicative noise. We can then perform a singular value decomposition $\widehat W\widehat R = \widehat V \Lambda \widehat U^{*}$ with diagonal operator $\Lambda$ that contains all singular values of the error system \eqref{eq:compsys} on its diagonal \cite[Theorem $5.1$]{ReSe}. It is then easy to check that 
\[\left\lVert \Delta(H) \right\rVert_{\operatorname{TC}} = \sum_{\lambda \in \Lambda} \lambda = \sum_{\mu \in \sqrt{\sigma(\widehat{O}\widehat{P})}} \mu.\]
This property follows as any decomposition $\widehat W \widehat R$ is equivalent to the Hankel operator $\widehat H$ associated with system \eqref{eq:compsys}:

More precisely, there exist unitary mappings \cite[Prop. $6.1$]{ReSe} $U: \overline{ \operatorname{ran}}(\widehat W\widehat R) \rightarrow \overline{ \operatorname{ran}}(\widehat H)$ and $V: \operatorname{ker}^{\perp}(\widehat W\widehat R) \rightarrow\operatorname{ker}^{\perp}(\widehat H)$ 
such that 
\[ \Delta(H) \vert_{\operatorname{ker}^{\perp}(\widehat H)} = \widehat H \vert_{\operatorname{ker}^{\perp}(\widehat H)}=U\left(\widehat W \widehat  R\right) \vert_{\operatorname{ker}^{\perp}(\widehat  W\widehat  R)} V^{*} \vert_{\operatorname{ker}^{\perp}(\widehat H)}. \]
{Notice that when BT is used, $\left\lVert \Delta(H) \right\rVert_{\operatorname{TC}}$ is expected to be small, since a reduced system is constructed that is supposed to keep the large Hankel singular values of the original system such that $ \sum_{\lambda \in \Lambda} \lambda$ has small summands in most of the cases.}

We summarize the preceding discussion of the Hankel operator error bounds:
\begin{itemize}
    \item The trace class norm of the Hankel operator difference is computable by solving in addition the Lyapunov equations for the error system consisting of the original and the reduced system \eqref{eq:compsys}.
    \item The error bound does not require the user to compute the Hankel operator directly. 
    \item As a word of caution: The Hankel operators do not have any obvious energy interpretation. In particular, the difference of Hankel operators in trace norm is not the same as the sum of truncated Hankel singular values in the MOR process.
\end{itemize}

\section{Ornstein-Uhlenbeck processes}
\label{sec:stoch}
Let $X$ be a Hilbert space, $A$ be the generator of a $C_0$-semigroup $(T_t)_{t \ge 0}$ on $X$, as well as $K:\mathbb R^d \rightarrow X$ and $B:\mathbb{R}^m \rightarrow X$ both linear and continuous maps.
For the OU processes \eqref{eq:OU}, we define the mild solution $(Z^{\operatorname{ou}}_t)_{t \ge 0}$ with initial state $\xi \in L^2(\Omega,\mathcal F_0,X)$ with output given by the variation of constant formula 
 \begin{equation}
 \label{eq:typea}
Y_t =  CZ^{\operatorname{ou}}_t  = CT_t\xi + \int_0^t CT_{t-s}K \ \mathrm dM_s + \int_0^t CT_{t-s} Bu_s \ \mathrm ds.
 \end{equation}

In particular, if $X$ is finite-dimensional or more general, if $(T_t)$ is uniformly continuous, then the semigroup is just given by $T_t:=e^{tA}.$

For OU processes we make the following stability assumption:
\begin{ass}[OU processes]
\label{ass:OU}
We assume that $A$ is the generator of an exponentially stable semigroup $(T_t)_{t \ge 0}$ such that for some $\omega>0$ and $\nu\ge 1: \ \Vert T_t \Vert \le \nu e^{-\omega t}.$ Moreover, we assume that $(M_t)_{t \ge 0}$ is a square-integrable mean zero  L\'evy process taking values in $\mathbb R^d$.
\end{ass}
{In the theory of balanced truncation, it is common to introduce two types of Gramians, an observability Gramian and a reachability Gramian. Here, we introduce for Ornstein-Uhlenbeck processes two possible types of such reachability Gramians, from which one of them is also taking into account non-zero initial conditions. Their slightly different definitions are mainly motivated by our two methods of obtaining error bounds that we introduce in this article.}
\begin{defi}[OU Gramians]
For the controlled OU process, we define the observability Gramian for $x,y \in X$ by
\begin{equation}
\begin{split}
\label{eq:OUGramians}
\langle x, \mathscr O^{\operatorname{ou}} y \rangle_X &:= \int_0^{\infty} \left\langle CT_s x,CT_s y \right\rangle_{\mathcal H} \ \mathrm ds
\end{split}
\end{equation}
and two types of reachability Gramians for $x,y \in X$ by
\begin{equation}
    \begin{split}
    \label{eq:OUGramians2}
\langle x,\mathscr P^{\operatorname{ou}}y \rangle_X &:= \int_0^{\infty} \langle x,T_t( K\mathbb E\left(M_1M_1^{*} \right)K^{*}+BB^{*}) T_t^{*}y \rangle_X \ \mathrm dt , \text{ and } \\
\langle x,\mathcal P^{\operatorname{ou}}y \rangle_X &:= \int_0^{\infty} \langle x,T_tBB^{*} T_t^{*}y \rangle_X \ \mathrm dt + \left\langle x,(B_{\operatorname{in}}B_{\operatorname{in}}^{*}+ K \mathbb E\left(M_1M_1^{*} \right)K^{*}) y \right\rangle_X.
\end{split}
\end{equation}
If $X$ is finite-dimensional, then the definition of the Gramians reduces in case of the observability Gramian to 
\begin{equation}
    \begin{split}
\mathscr O^{\operatorname{ou}} &= \int_0^{\infty}  T_s^{*}C^{*}CT_s  \ \mathrm ds
\end{split}
\end{equation}
and for the reachability Gramians to
\begin{equation}
    \begin{split}
\mathscr P^{\operatorname{ou}} &= \int_0^{\infty}  T_t ( K\mathbb E\left(M_1 M_1^{*} \right)K^{*}+BB^{*}) T_t^{*}  \ \mathrm dt, \text{ and } \\
\mathcal P^{\operatorname{ou}}&= \int_0^{\infty}T_tBB^{*} T_t^{*} \ \mathrm dt +B_{\operatorname{in}}B_{\operatorname{in}}^{*}+ K\mathbb E\left(M_1 M_1^{*} \right)K^{*}.
\end{split}
\end{equation}
\end{defi}
The weak formulation for infinite-dimensional spaces $X$ is needed in general, as $t\mapsto T_t$ is not necessarily measurable but $t \mapsto T_tx$ for any fixed $x \in X$ is.

\begin{defi}[OU Hankel operator]
\label{defi:OUHankel}
The \emph{OU Hankel operator} is the operator $H^{\operatorname{ou}}:=W^{\operatorname{ou}}R^{\operatorname{ou}} \in \mathcal L( L^2((0,\infty), \mathbb R^m) \oplus\mathbb R^d \oplus \mathbb R^k, L^2((0,\infty), \mathcal H))$.

Here, we assume that the controls take values in $\mathbb R^m$, the space of admissible initial states is $k$-dimensional, and the noise process takes values in $\mathbb R^d$. 

The \emph{observability map}  $W^{\operatorname{ou}} \in \mathcal L(X, L^2((0,\infty), \mathcal H))$ is defined as
\[ W^{\operatorname{ou}}_tx:=CT_tx \text{ such that }\mathscr O^{\operatorname{ou}} = W^{\operatorname{ou}*}W^{\operatorname{ou}}, \]
where $W^{\operatorname{ou}}$ is a Hilbert-Schmidt operator if $\mathcal H$ is finite-dimensional. 

The \emph{reachability map} $R^{\operatorname{ou}} \in \operatorname{HS}(   L^2((0,\infty), \mathbb R^m) \oplus\mathbb R^d \oplus \mathbb R^k,X)$ is defined as
\begin{equation}
\begin{split}
&R^{\operatorname{ou}}(f,v,u):= \int_0^{\infty} T_sBf_s \ \mathrm ds+ K\sqrt{\mathbb E(M_1M_1^{*})} v+ B_{\operatorname{in}}u \\
&\text{ such that } \mathcal P^{\operatorname{ou}}= R^{\operatorname{ou}}R^{\operatorname{ou}*}.
\end{split}
\end{equation}
\end{defi}
The Gramians \eqref{eq:OUGramians} and \eqref{eq:OUGramians2} satisfy the following Lyapunov equations:
\begin{prop}[Lyapunov equations]
\label{eq:LyapOU}
The observability Gramian satisfies for all $x_2,y_2 \in D(A)$
\begin{equation*}
\begin{split}
& \langle Ax_2, \mathscr O^{\operatorname{ou}}y_2 \rangle_X + \langle x_2, \mathscr O^{\operatorname{ou}}  Ay_2 \rangle_X+\langle x_2, C^{*}C y_2 \rangle_X  =0
\end{split}
\end{equation*}
and the reachability Gramians satisfies, for all $x_1,y_1 \in D(A^{*})$, with $S:=B_{\operatorname{in}}B_{\operatorname{in}}^{*}+ K\mathbb E(M_1M_1^*)K^{*}$,
\begin{equation*}
\begin{split}
&\left\langle  x_1,\mathscr P^{\operatorname{ou}} A^{*}y_1 \right\rangle_X +  \left\langle A^{*}x_1,\mathscr P^{\operatorname{ou}} y_1 \right \rangle_X  +\left\langle x_1,(BB^{*}+ K\mathbb E(M_1M_1^*)K^{*}) y_1 \right\rangle_X = 0 \text{ and } \\
&\left\langle  x_1,(\mathcal P^{\operatorname{ou}} -S)A^{*}y_1 \right\rangle_X +  \left\langle A^{*}x_1,(\mathcal P^{\operatorname{ou}} -S)y_1 \right \rangle_X+\left\langle x_1, BB^{*} y_1 \right\rangle_X = 0.
\end{split}
\end{equation*}
If $A$ is bounded, the equations reduce to 
\[A^{*}\mathscr O^{\operatorname{ou}} +  \mathscr O^{\operatorname{ou}}A+C^{*}C  =0\]
and
\begin{equation*}
\begin{split}
& \mathscr P^{\operatorname{ou}}A^{*}+ A\mathscr P^{\operatorname{ou}}+BB^{*}+ K\mathbb E(M_1M_1^*)K^{*}= 0\text{ and } \\
&(\mathcal P^{\operatorname{ou}} -S)A^{*}+ A(\mathcal P^{\operatorname{ou}} -S)+BB^{*}= 0.
\end{split}
\end{equation*}
\end{prop}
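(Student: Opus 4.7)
The plan is to derive each Lyapunov equation by differentiating the integrand in the definition of the relevant Gramian and integrating over $[0,\infty)$, using exponential stability from Assumption \ref{ass:OU} to discard the boundary term at infinity. For the observability Gramian I would fix $x,y \in D(A)$, use that $T_s$ maps $D(A)$ into itself with $\frac{d}{ds}T_s x = T_sAx = AT_sx$, and compute
\begin{equation*}
\frac{d}{ds}\langle CT_s x, CT_s y\rangle_{\mathcal H} = \langle CT_s Ax, CT_s y\rangle_{\mathcal H} + \langle CT_s x, CT_s Ay\rangle_{\mathcal H}.
\end{equation*}
Integrating on $[0,\infty)$, the bound $\|CT_s\|\le \|C\|\nu e^{-\omega s}$ forces the upper boundary term to vanish and makes both sides absolutely convergent, so the Newton--Leibniz formula produces precisely the first Lyapunov identity.

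For the reachability Gramian $\mathscr P^{\operatorname{ou}}$, I would exploit that on a Hilbert space $(T_t^*)_{t\ge 0}$ is itself a $C_0$-semigroup with generator $A^*$, satisfying the same estimate $\|T_t^*\|\le \nu e^{-\omega t}$. Writing $\langle x,T_tQT_t^* y\rangle_X=\langle T_t^* x,QT_t^* y\rangle_X$ with $Q:=BB^*+K\mathbb E(M_1M_1^*)K^*$, I would differentiate for $x,y\in D(A^*)$,
\begin{equation*}
\frac{d}{dt}\langle T_t^* x,QT_t^* y\rangle_X = \langle A^* T_t^* x,QT_t^* y\rangle_X + \langle T_t^* x,QA^* T_t^* y\rangle_X,
\end{equation*}
and integrate on $[0,\infty)$ to read off the second identity. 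The third equation drops out for free: by construction $\mathcal P^{\operatorname{ou}}-S=\int_0^\infty T_tBB^*T_t^*\,dt$, so the identical argument with $Q$ replaced by $BB^*$ applies verbatim.

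The bounded-$A$ statements are then an immediate corollary. Indeed, if $A\in\mathcal L(X)$ then $D(A)=D(A^*)=X$, the semigroup is uniformly continuous with $T_t=e^{tA}$, and the weak Lyapunov relations extend by continuity to operator identities on all of $X$. The main technical obstacle I anticipate is not algebraic but analytic: justifying the differentiation under the integral sign and the vanishing of the boundary term at $t=\infty$ in the unbounded setting. Both are handled uniformly by the exponential bound $\|T_t\|\le\nu e^{-\omega t}$ from Assumption \ref{ass:OU}, combined with boundedness of $B$, $C$, $K$ and square-integrability of $M_1$, which also guarantees that the integrals defining $\mathscr O^{\operatorname{ou}}$, $\mathscr P^{\operatorname{ou}}$ and $\mathcal P^{\operatorname{ou}}$ converge absolutely.
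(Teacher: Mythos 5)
Your proof is correct and follows essentially the same route as the paper: the paper simply observes that $\mathscr O^{\operatorname{ou}}$, $\mathscr P^{\operatorname{ou}}$ and $\mathcal P^{\operatorname{ou}}-S$ have the form of deterministic observability/reachability Gramians and cites the standard Lyapunov identities from the literature, whereas you carry out that standard semigroup-differentiation argument explicitly (differentiate the integrand, use $\|T_t\|\le\nu e^{-\omega t}$ to kill the boundary term at infinity, and note that $(T_t^*)$ is a $C_0$-semigroup with generator $A^*$ on a Hilbert space). The only difference is that your version is self-contained where the paper defers to a reference.
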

\begin{proof}
The Lyapunov equations follow immediately from the Lyapunov equations for linear deterministic systems \cite{ORW}: 
\medskip

This is immediate for the observability Gramian, since it coincides with the observability Gramian for linear systems. 

\medskip

For the reachability Gramian it suffices to observe that $\mathscr P^{\operatorname{ou}}$ and $\mathcal P^{\operatorname{ou}}-S$ are of the form of a linear reachability Gramian.
\end{proof}
\subsection{Error bounds}
We start by stating a direct bound for two OU processes as in \eqref{eq:OU} with $(C,A,K,B)$ and $(\widetilde C, \widetilde A, \widetilde K, \widetilde B)$, respectively, both having zero initial conditions. To this end, let $(T_t)$ and $(\widetilde T_t)$ be the semigroups generated by $A$ and $\widetilde A$.
To state the error bound, we introduce for $i \in \left\{1, 2 \right\}$ the auxiliary Gramians defined in terms of $B_1=B$ and $B_2 = K \sqrt{\mathbb E(M_1M_1^*)}$ by
\begin{equation}
\begin{split}
\label{Lyapunov_equations}
AP_i + P_i A^{*} &= -B_i B_i^{*}, \  P_i:= \int_0^\infty  C T_{s}B_iB_i^{*} T_{s}^{*} C^{*} \mathrm ds,\\
AP_{i, g}  + P_{i, g}  \widetilde{A}^{*} &= -B_i \widetilde{B}_i^{*}, \ P_{i, g} := \int_0^\infty  C T_{s} B_i\widetilde{B}_i^{*} \widetilde T_{s}^{*} \widetilde{C}^{*} \mathrm ds,\\
\widetilde{A}\widetilde{P}_i + \widetilde{P}_i \widetilde{A}^{*} &= -\widetilde{B}_i\widetilde{B}_i^{*}, \ \widetilde{P}_i := \int_0^\infty  \widetilde{C} \widetilde T_{s}\widetilde{B}_i\widetilde{B}_i^{*} \widetilde T_{s}^{*} \widetilde{C}^{*} \mathrm ds,
\end{split}
\end{equation}
and observe that the sums $\mathscr P^{\operatorname{ou}}=P_1+P_2 $ and $\widetilde{\mathscr P}^{\operatorname{ou}}=\widetilde P_1+\widetilde P_2$ coincide with the reachability Gramian for $B_{\text{in}}=0.$ Moreover, we write $\mathscr P^{\operatorname{ou}}_g=P_{1,g}+P_{2,g}.$ We then have the following error bound for the outputs of two Ornstein-Uhlenbeck processes starting from zero with possibly two different controls.

\begin{theo}[Error bound from zero]
\label{general_bound_same_u}
For control functions $u,\widetilde u \in L^2_{\operatorname{ad}}(\Omega_T,\mathbb R^n)$ and initial conditions $Y_0=\widetilde{Y}_0=0$, it follows that the difference between the outputs of two OU processes satisfies
\begin{equation}
\begin{split}
\label{eq:equation1}
\sup_{t \in [0, T]} \sqrt{\E\left[ \| Y_t - \widetilde{Y}_t\|^2 \right]} &\le \sqrt{2} (1 \vee \| u\|_{L^2(\Omega_T)}) \Big(\tr\Big(C \mathscr P^{\operatorname{ou}} C^{*} - 2C \mathscr P^{\operatorname{ou}}_g \widetilde{C}^{*} \\
&+\widetilde{C}\widetilde{\mathscr P}^{\operatorname{ou}}\widetilde{C}^{*}\Big) \Big)^{\frac{1}{2}} + \left(\operatorname{tr}(\widetilde C \widetilde{P}_1 \widetilde C^{*})\right)^{1/2} \Vert u - \widetilde u \Vert_{L^2(\Omega_T)}.
\end{split}
\end{equation}
\end{theo}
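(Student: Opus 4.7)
My plan is to expand $Y_t-\tilde Y_t$ using the variation-of-constants formula \eqref{eq:typea}, split it into three pieces that isolate (i) the mismatch between the two deterministic control-to-output maps, (ii) the mismatch between the two control signals, and (iii) the mismatch between the two noise-to-output maps, and then bound each piece in $L^2(\Omega)$ separately.

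\textbf{Step 1: Decomposition.} Writing $G(r) := CT_rB - \tilde C \tilde T_r \tilde B$ and $F(r) := CT_rK - \tilde C \tilde T_r \tilde K$, I would decompose
\begin{equation*}
Y_t-\tilde Y_t \;=\; \underbrace{\int_0^t G(t-s)u_s\,ds}_{=:\,A_t} \;+\; \underbrace{\int_0^t \tilde C\tilde T_{t-s}\tilde B (u_s-\tilde u_s)\,ds}_{=:\,B_t} \;+\; \underbrace{\int_0^t F(t-s)\,dM_s}_{=:\,C_t}.
\end{equation*}
Since the initial conditions vanish, no term depending on $\xi$ appears. Applying the triangle inequality in $L^2(\Omega)$ and then $(a+b)^2 \le 2a^2+2b^2$, it suffices to bound $\sqrt{\E\|A_t+C_t\|^2}$ and $\sqrt{\E\|B_t\|^2}$ separately and combine.

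\textbf{Step 2: The deterministic pieces $A_t$ and $B_t$.} For both pieces I will use Cauchy--Schwarz in the Bochner integral, together with $\|\cdot\|_{\mathrm{op}}\le\|\cdot\|_{\mathrm{HS}}$ and extension of the $s$-integral to $[0,\infty)$ by positivity. For $B_t$ this produces
\begin{equation*}
\E\|B_t\|^2 \le \int_0^\infty \|\tilde C\tilde T_r\tilde B\|_{\mathrm{HS}}^2\,dr\cdot \|u-\tilde u\|_{L^2(\Omega_T)}^2 = \tr(\tilde C\tilde P_1\tilde C^*)\,\|u-\tilde u\|_{L^2(\Omega_T)}^2,
\end{equation*}
using the identity $\int_0^\infty \|\tilde C\tilde T_r\tilde B\|_{\mathrm{HS}}^2\,dr=\tr(\tilde C\tilde P_1\tilde C^*)$ from \eqref{Lyapunov_equations}. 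An analogous argument with $G$ in place of $\tilde C\tilde T_r\tilde B$ yields
\begin{equation*}
\E\|A_t\|^2 \le \Big(\tr(CP_1C^*)-2\tr(CP_{1,g}\tilde C^*)+\tr(\tilde C\tilde P_1\tilde C^*)\Big)\,\|u\|_{L^2(\Omega_T)}^2,
\end{equation*}
since $\|G(r)\|_{\mathrm{HS}}^2$ expands to the three traces by Hilbert--Schmidt bilinearity.

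\textbf{Step 3: The noise piece $C_t$.} I will invoke the It\^o isometry for the square-integrable, mean-zero L\'evy integral, giving
\begin{equation*}
\E\|C_t\|^2 = \int_0^t \|F(t-s)\sqrt{\E(M_1M_1^*)}\|_{\mathrm{HS}}^2\,ds \le \int_0^\infty \|CT_rB_2-\tilde C\tilde T_r\tilde B_2\|_{\mathrm{HS}}^2\,dr,
\end{equation*}
with $B_2=K\sqrt{\E(M_1M_1^*)}$ (and $\tilde B_2$ analogously). The right-hand side expands to $\tr(CP_2C^*)-2\tr(CP_{2,g}\tilde C^*)+\tr(\tilde C\tilde P_2\tilde C^*)$ by the same Hilbert--Schmidt expansion.

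\textbf{Step 4: Assembly.} Summing the bounds for $A_t$ and $C_t$ and using that $c_1\|u\|_{L^2}^2+c_2\le(c_1+c_2)(1\vee\|u\|_{L^2}^2)$, the definitions $\mathscr P^{\mathrm{ou}}=P_1+P_2$ and $\mathscr P^{\mathrm{ou}}_g=P_{1,g}+P_{2,g}$ combine the $i=1,2$ trace terms into the single expression appearing in \eqref{eq:equation1}. Taking square roots, inserting the $\sqrt 2$ from the $(a+b)^2\le 2a^2+2b^2$ step, and adding the $B_t$ bound via the triangle inequality yields \eqref{eq:equation1}. Finally, all three right-hand sides are independent of $t\in[0,T]$, so the supremum is inherited for free.

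\textbf{Expected main obstacle.} The computations are essentially bookkeeping; the only nontrivial point is verifying the It\^o isometry in Step 3 for a general $\mathbb R^d$-valued square-integrable L\'evy process (so that the covariance $\E(M_1M_1^*)$ appears in the correct tensorial way) and matching the resulting integrand exactly with $\|CT_rB_2-\tilde C\tilde T_r\tilde B_2\|_{\mathrm{HS}}^2$ under Assumption~\ref{ass:OU}. The exponential stability $\|T_t\|\le\nu e^{-\omega t}$ is what justifies extending the $s$-integrals to $[0,\infty)$ and guarantees that all traces in \eqref{Lyapunov_equations} are finite.
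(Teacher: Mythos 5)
Your proposal is correct and follows essentially the same route as the paper's proof: the same three-term decomposition (control-map mismatch, control-signal mismatch, noise-map mismatch), the same Cauchy--Schwarz and integral-extension arguments yielding the Gramian traces, the same $\sqrt{2}$ combination and $1\vee\|u\|^2$ enlargement. The only cosmetic difference is that you derive the noise-term bound directly from the It\^o isometry, whereas the paper cites the corresponding inequality from \cite{FR2}; the underlying computation is identical.
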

\begin{proof}
The explicit outputs of controlled OU processes are according to \eqref{eq:typea} given by 
\begin{align}
\label{OU_explicit_solutions}
\begin{split}
Y_t &= \underbrace{C \int_0^t T_{t-s}B u_s \ \mathrm ds}_{=:I_1(u)} + \underbrace{C \int_0^t T_{t-s}K \ \mathrm d M_s}_{=:I_2} \text{ and } \\
\widetilde{Y}_t &= \underbrace{\widetilde{C} \int_0^t \widetilde{T}_{t-s}  \widetilde{B} \widetilde{u}_s \  \mathrm ds}_{=:\widetilde{I}_1(\widetilde{u})} + \underbrace{\widetilde{C} \int_0^t \widetilde{T}_{t-s} \widetilde{K} \ \mathrm d M_s}_{=:\widetilde{I}_2}.
\end{split}
\end{align}

We insert the representations for $Y_t$ and $\widetilde{Y}_t$ from \eqref{OU_explicit_solutions} and obtain for \eqref{eq:equation1}
\begin{align}\label{eq:ineq_split_integrals}
\begin{split}
\left(\E \|Y_t-\widetilde{Y}_t\|^2\right)^{\frac{1}{2}}  &= \left(\E \| (I_1(u) - \widetilde{I}_1(u))+(\widetilde{I}_1(u) - \widetilde{I}_1(\widetilde u)) + (I_2 - \widetilde{I}_2) \|^2\right)^{\frac{1}{2}}  \\
&\le \left(\E \|I_1(u) - \widetilde{I}_1(u)\|^2\right)^{\frac{1}{2}} + \left(\E \|I_2 - \widetilde{I}_2\|^2\right)^{\frac{1}{2}}+\left(\E \|\widetilde I_1(u) - \widetilde{I}_1(\widetilde u)\|^2\right)^{\frac{1}{2}} 
\end{split}
\end{align}
From \cite[(31)]{FR2} we know that
\begin{equation}
\label{ineq_BM}
\mathbb E [\|I_2 - \widetilde{I}_2\|^2] \le \tr\left[C P_2 C^{*}- 2 C P_{2,g} \widetilde{C}^{*} + \widetilde{C}\widetilde{P}_2\widetilde{C}^{*}\right]
\end{equation}
for all $t \in [0, T]$. We can estimate the first term in \eqref{eq:ineq_split_integrals} using that
\begin{equation}
\label{ineq_control}
\begin{split}
\E [\|I_1(u) - \widetilde{I}_1(u)\|^2] &= \E\left[ \left\| \int_0^t \left( C T_{t-s}B u_s - \widetilde{C}\widetilde{T}_{t-s}\widetilde{B}\widetilde{u}_s  \right) \mathrm ds \right\|^2 \right] \\
&\le \E\left[ \left(\int_0^t \left\|  C T_{t-s}B- \widetilde{C} \widetilde{T}_{t-s}\widetilde B \right\|_{\operatorname{HS}} \|u_s\| \mathrm ds \right)^2 \right]\\
&\le \E\left[  \int_0^t \left\|  C T_{t-s}B  - \widetilde{C} \widetilde{T}_{t-s}\widetilde{B}  \right\|_{\operatorname{HS}}^2 \mathrm  ds \int_0^t\|u_s\|^2 \mathrm ds \right] \\
&=  \int_0^t \left\|  C T_{t-s}B  - \widetilde{C} \widetilde{T}_{t-s}\widetilde{B}  \right\|_{\operatorname{HS}}^2 \mathrm  ds \ \E \left[   \int_0^t\|u_s\|^2 \mathrm ds \right] \\
&\le \tr\left[CP_1 C^{*} - 2CP_{1, g}  \widetilde{C}^{*} + \widetilde{C}\widetilde{P}_1 \widetilde{C}^{*} \right] \| u\|^2_{L^2(\Omega_T)},
\end{split}
\end{equation}
where we used Cauchy-Schwarz and took the limit $t \to \infty$ in the first integral and $t \to T$ in the second one. Furthermore, we find for the remaining term in \eqref{eq:ineq_split_integrals} that \begin{align*}
\left(\E \|\widetilde I_1(u) - \widetilde{I}_1(\widetilde u)\|^2 \right)^{\frac{1}{2}} &\le \left(\E\left[\left(\int_0^t \left\|\widetilde{C} e^{\widetilde{A}(t-s)}\widetilde{B}_1 
\right\|_{\operatorname{HS}} \|u_s - \widetilde u_s\| \mathrm ds \right)^2 \right] \right)^{\frac{1}{2}}  \\
&\le \left(\int_0^t \left\|\widetilde{C} e^{\widetilde{A}(t-s)}\widetilde{B}_1  \right\|_{\operatorname{HS}}^2  \mathrm ds \right)^{\frac{1}{2}} \left(\E\int_0^t \|u_s - \widetilde u_s\|^2 \mathrm ds\right)^{\frac{1}{2}} \\
&\le\left(\tr(\widetilde{C}\widetilde{P}_1 \widetilde{C}^*) \right)^{\frac{1}{2}}   \| u - \widetilde u\|_{L^2(\Omega_T)}.
\end{align*}  
In order to get \eqref{eq:equation1}, we estimate
\begin{equation}\label{someset}
 \left(\E \|I_1(u) - \widetilde{I}_1(u)\|^2\right)^{\frac{1}{2}} + \left(\E \|I_2 - \widetilde{I}_2\|^2\right)^{\frac{1}{2}}
 \le  \sqrt{2}\sqrt{\E [\|I_1(u) - \widetilde{I}_1(u)\|^2]+ \E [\|I_2 - \widetilde{I}_2\|^2]}
\end{equation}
applying $a + b \le \sqrt{2}\sqrt{a^2 + b^2}$ for $a, b \in \mathbb{R}^+$. We insert \eqref{ineq_BM} and \eqref{ineq_control} into (\ref{someset}) and enlarge the resulting expression trough 
$1, \| u\|^2_{L^2(\Omega_T)}\leq (1 \vee \| u\|^2_{L^2(\Omega_T)})$. The bound \eqref{eq:equation1} now follows, by the linearity of the trace.
\end{proof}
A different control $\widetilde u$ in the reduced order model appears for example if model reduction is applied in the context of optimal control. Solving a control problem in the reduced system then
leads to a different control strategy compared to the full model. However, we see from the bound in Theorem \ref{general_bound_same_u} that the expression depending on the difference between $u$ and $\widetilde u$ is scaled by a term depending on $\widetilde P_1$, an operator that cannot 
be expected to be small. Hence, one can only guarantee a good approximation if $u$ and $\widetilde u$ are not too different. {Notice that the bound in Theorem \ref{general_bound_same_u} is a generalization of the result in \cite{FR2}, where $B=0$ was considered. Moreover, if the second system is a reduced model based on BT, then
$\tr\Big(C \mathscr P^{\operatorname{ou}} C^{*} - 2C \mathscr P^{\operatorname{ou}}_g \widetilde{C}^{*} +\widetilde{C}\widetilde{\mathscr P}^{\operatorname{ou}}\widetilde{C}^{*}\Big) $ in Theorem \ref{general_bound_same_u} can be expressed in terms of a weighted sum of truncated Hankel singular values of the system with zero initial data, which can be shown following the steps of \cite{FR2}. Therefore, the error of BT is low if we choose the reduced system dimension such that the truncated Hankel singular values are small.}

We now state an error bound in case the initial condition is not zero.
\begin{corr} [Error bound non-zero initial states]
\label{L2-error-nonzero-initial-split}
Let $u,\widetilde u \in L^2_{\operatorname{ad}}(\Omega_T,\mathbb R^n)$, $Y$ be the output of \eqref{eq:OU} with  $Z^{\operatorname{ou}}_0=\xi = B_{in} v$ and $\widetilde Y$ be the output of the reduced system 
with $\widetilde Z^{\operatorname{ou}}_0=0$. We define \begin{align}\label{definetildeY0}
\widetilde Y_t^{(0)} =\widetilde C^{(0)} \widetilde T^{(0)}_t \widetilde B_{in} v + \widetilde Y_t,\end{align}
where $(\widetilde T^{(0)}_t)_{t\geq 0}$ is a $C_0$-semigroup generated by some operator $\widetilde A^{(0)}$ and $\widetilde B_{in}$, $\widetilde C^{(0)}$ are additional input and output operators, respectively. Then, we have
\begin{align*}
\| Y - \widetilde{Y}^{(0)}\|_{L^2(\Omega_T)}  \le & \sqrt{2 T} (1 \vee \| u\|_{L^2(\Omega_T)}) \Big(\tr\Big(C \mathscr P^{\operatorname{ou}} C^{*} - 2C \mathscr P^{\operatorname{ou}}_g \widetilde{C}^{*} 
+\widetilde{C}\widetilde{\mathscr P}^{\operatorname{ou}}\widetilde{C}^{*}\Big) \Big)^{\frac{1}{2}} \\
&+ \|v\|_{L^2(\Omega)}\Big(\tr\Big(C P_0 C^{*} - 2C P_{0, g} \widetilde{C}^{(0)*} 
+\widetilde{C}^{(0)}\widetilde{P}_0\widetilde{C}^{(0)*}\Big) \Big)^{\frac{1}{2}} \\
&+\sqrt{T}  \left(\operatorname{tr}(\widetilde C \widetilde{P}_1 \widetilde C^{*})\right)^{1/2} \Vert u - \widetilde u \Vert_{L^2(\Omega_T)},
\end{align*}
where $P_0, P_{0, g}$ and $\widetilde P_0$ satisfy \begin{align*}
AP_0 + P_0 A^{*} = -B_{in} B_{in}^{*}, \  AP_{0, g}  + P_{0, g}  \widetilde{A}^{(0)*} = -B_{in} \widetilde{B}_{in}^{*}, \  
\widetilde{A}^{(0)}\widetilde{P}_0 + \widetilde{P}_0 \widetilde{A}^{(0)*} = -\widetilde{B}_{in}\widetilde{B}_{in}^{*}.
\end{align*}
\end{corr}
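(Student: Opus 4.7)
The plan is to split $Y_t - \tilde Y_t^{(0)}$ into a purely initial-state piece and a forced-response piece, apply Theorem \ref{general_bound_same_u} to the latter after converting its supremum bound to an $L^2_t$ bound, and handle the former by a direct Hilbert--Schmidt/trace computation.

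First, using the variation-of-constants formula \eqref{eq:typea} for $Y_t$ together with the definition \eqref{definetildeY0} of $\tilde Y_t^{(0)}$, I would write
\[
Y_t - \tilde Y_t^{(0)} \;=\; \underbrace{\bigl(C T_t B_{\mathrm{in}} - \tilde C^{(0)} \tilde T_t^{(0)} \tilde B_{\mathrm{in}}\bigr)v}_{=:\, D_t v} \;+\; \underbrace{\bigl(I_1(u)+I_2\bigr) - \bigl(\tilde I_1(\tilde u) + \tilde I_2\bigr)}_{=:\, E_t},
\]
with $I_1,I_2,\tilde I_1,\tilde I_2$ as in \eqref{OU_explicit_solutions}. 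The triangle inequality in $L^2(\Omega_T)$ reduces the proof to bounding $\|Dv\|_{L^2(\Omega_T)}$ and $\|E\|_{L^2(\Omega_T)}$ separately.

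The process $E_t$ is exactly the output difference of two OU systems driven from zero with controls $u$ and $\tilde u$, i.e.\ the quantity controlled by Theorem \ref{general_bound_same_u}. Combining that theorem with the elementary estimate $\|E\|_{L^2(\Omega_T)}^2 = \int_0^T \E\|E_t\|^2 \, dt \le T\,\sup_{t\in[0,T]}\E\|E_t\|^2$ immediately produces the first and third summands of the claimed inequality, with the extra factor $\sqrt{T}$ arising from this conversion.

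For the initial-state piece I would exploit that $D_t$ is a deterministic operator while $v \in L^2(\Omega,\mathbb R^k)$ is random. Using $\|D_t v\| \le \|D_t\|_{\operatorname{HS}}\,\|v\|$ and extending the $t$-integration to $[0,\infty)$ (legitimate since the integrand is non-negative, and finite thanks to the exponential stability in Assumption \ref{ass:OU}, applied to $(T_t)$ and to $(\tilde T_t^{(0)})$),
\[
\|Dv\|_{L^2(\Omega_T)}^2 \;\le\; \|v\|_{L^2(\Omega)}^2 \int_0^\infty \|D_t\|_{\operatorname{HS}}^2 \, dt \;=\; \|v\|_{L^2(\Omega)}^2 \; \tr \!\int_0^\infty D_t D_t^{*}\, dt.
\]
Expanding $D_t D_t^{*}$ into four summands, the three integrals
\(\int_0^\infty T_t B_{\mathrm{in}}B_{\mathrm{in}}^{*} T_t^{*}\,dt\),
\(\int_0^\infty T_t B_{\mathrm{in}}\tilde B_{\mathrm{in}}^{*} \tilde T_t^{(0)*}\,dt\), and
\(\int_0^\infty \tilde T_t^{(0)}\tilde B_{\mathrm{in}}\tilde B_{\mathrm{in}}^{*}\tilde T_t^{(0)*}\,dt\)
are identified with $P_0$, $P_{0,g}$, $\tilde P_0$ via the stated Lyapunov equations, and the cyclic property of the trace collapses the two cross-terms into the coefficient $-2\,\tr(C P_{0,g}\tilde C^{(0)*})$. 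This produces the middle line of the bound.

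No step seems to present a genuine obstacle; the main care is in (a) using $\|\cdot\|_{\operatorname{op}} \le \|\cdot\|_{\operatorname{HS}}$ to separate the random factor $v$ from a deterministic operator norm, and (b) the bookkeeping that matches the four expanded terms of $\int_0^\infty D_t D_t^{*}\, dt$ with the three Gramians $P_0, P_{0,g}, \tilde P_0$ defined at the end of the statement.
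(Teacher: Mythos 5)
Your proposal is correct and follows essentially the same route as the paper: the same splitting of $Y_t-\tilde Y_t^{(0)}$ into the zero-initial-state output difference (handled by Theorem \ref{general_bound_same_u} and the conversion $\|E\|_{L^2(\Omega_T)}\le\sqrt{T}\sup_t(\E\|E_t\|^2)^{1/2}$) plus the deterministic initial-state term, which the paper also bounds via $\|\cdot\|_{\operatorname{HS}}$ to factor out $\E\|v\|^2$ and then identifies with $\tr(CP_0C^*-2CP_{0,g}\tilde C^{(0)*}+\tilde C^{(0)}\tilde P_0\tilde C^{(0)*})$. No substantive differences.
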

\begin{proof}
We use the triangle inequality to obtain
\begin{align*}
 \| Y - \widetilde{Y}^{(0)}\|_{L^2(\Omega_T)} \leq &\left(\mathbb E \int_0^T \|(Y_t - C T_t B_{in} v) - \widetilde Y_t\|^2 \mathrm dt\right)^{\frac{1}{2}} \\&
 + \left(\mathbb E\int_0^T \|C T_t B_{in} v - \widetilde C^{(0)} \widetilde T^{(0)}_t \widetilde B_{in} v  \|^2 \mathrm dt\right)^{\frac{1}{2}}.
\end{align*}
Since the function $Y_t - C T_t B_{in} v$, $t\in [0, T]$, is the output to \eqref{eq:OU} with zero initial state, Theorem \ref{general_bound_same_u} yields \begin{align*}
&\left(\mathbb E \int_0^T \|(Y_t - C T_t B_{in} v) - \widetilde Y_t\|^2 \mathrm dt\right)^{\frac{1}{2}} \leq  \sqrt{T}\left(\operatorname{tr}(\widetilde C \widetilde{P}_1 \widetilde C^{*})\right)^{1/2} \Vert u - \widetilde u \Vert_{L^2(\Omega_T)}\\
&\quad \quad\quad+ \sqrt{2T} (1 \vee \| u\|_{L^2(\Omega_T)}) \Big(\tr\Big(C \mathscr P^{\operatorname{ou}} C^{*} - 2C \mathscr P^{\operatorname{ou}}_g \widetilde{C}^{*} 
\widetilde{C}\widetilde{\mathscr P}^{\operatorname{ou}}\widetilde{C}^{*}\Big) \Big)^{\frac{1}{2}}.
                                          \end{align*}
Moreover, as in previous estimates, we find \begin{align*}
\mathbb E \int_0^T \|C T_t B_{in} v - \widetilde C^{(0)} \widetilde T^{(0)}_t \widetilde B_{in} v  \|^2 \mathrm dt &\leq
\int_0^T \|C T_t B_{in} - \widetilde C^{(0)} \widetilde T^{(0)}_t \widetilde B_{in}\|^2_{HS} \mathrm dt \;\mathbb E\|v\|^2 \\
&\leq \mathbb E\|v\|^2 \tr\Big(C P_0 C^{*} - 2C P_{0, g} \widetilde{C}^{(0)*}+\widetilde{C}^{(0)}\widetilde{P}_0\widetilde{C}^{(0)*}\Big)
\end{align*}
concluding the proof.
 \end{proof}
\begin{rem}\label{rem_splitting}
The choice of $\widetilde Y^{(0)}$ in (\ref{definetildeY0}) is motivated by the fact that \eqref{eq:OU} can be decomposed into a homogeneous and inhomogeneous part. Its output can then be written as
$Y_t = C \mathcal H^{\operatorname{ou}}_{t} + C \mathcal I^{\operatorname{ou}}_t$, where
\begin{align}
\label{eq:homOU}
d \mathcal H^{\operatorname{ou}}_{t} &= A \mathcal H^{\operatorname{ou}}_{t} \ \mathrm dt,\quad\mathcal H_{0}^{\operatorname{ou}}=\xi=B_{in} v, \\
\label{eq:perOU}
 d \mathcal I^{\operatorname{ou}}_t &= A\mathcal I^{\operatorname{ou}}_t \ \mathrm dt + Bu_t \ \mathrm dt + K\ \mathrm dM_t ,\quad \mathcal I_{0}^{\operatorname{ou}}=0.
\end{align}
As in \cite{BGM}, BT based on the Gramian $P_0$ can be applied to \eqref{eq:homOU} in order to get a reduced system with matrices $(\widetilde A^{(0)}, \widetilde B_{in}, \widetilde C^{(0)})$. BT
is used a second time but now based on $\mathscr P^{\operatorname{ou}}$ to find a reduced system to \eqref{eq:perOU}. The reduced order matrices in this case are $(\widetilde A, \widetilde B, \widetilde C, \widetilde K)$. The sum of 
both reduced order outputs is then a suitable candidate for the choice of $\widetilde Y^{(0)}$. {In the context of BT, it was also shown in \cite{BGM} that the error term $\tr\Big(C P_0 C^{*} - 2C P_{0, g} \widetilde{C}^{(0)*}+\widetilde{C}^{(0)}\widetilde{P}_0\widetilde{C}^{(0)*}\Big)$ is a function of the truncated Hankel singular values based on $P_0$. Consequently, BT applied to \eqref{eq:homOU} and \eqref{eq:perOU} yields a small error if one truncates the respective small Hankel singular values only.}
\end{rem}
{
We now state another error bound that takes into account the initial states and bounds the norms appearing in the control functional \eqref{eq:energy2}. In contrast to the previous approach in Remark \ref{rem_splitting}, the second ansatz does not rely on a splitting of the system. It is an all in one reduction procedure which invokes the Hankel operator that relies on the reachability Gramian $\mathcal P^{\operatorname{ou}}.$ However, the error will be bounded by the truncated singular values of the error system \eqref{eq:compsys} instead of the truncated Hankel singular values of the large-scale system. First, we need the following lemma,}
{where we employ $\Delta$ introduced in Subsection \ref{sec:Not}.}
\begin{lemm} 
\label{lemmas}
Let $\mathcal H \simeq \mathbb R^n$ be a finite-dimensional space, then for two systems with the same L\'evy noise profile, satisfying Assumption \ref{ass:OU}, the difference of their Hankel operators $\Delta(H^{\operatorname{ou}})$ satisfies 
\begin{equation}
\begin{split}
\label{eq:estm2}
\frac{1}{\sqrt{T}} \left\lVert  \Delta \left(  \int_0^t CT_{t-s} K \ \mathrm dM_s \right) \right\rVert_{L^2(\Omega_T,\operatorname{HS}(\mathbb R^m, \mathbb R^n))} 
&\le \left\lVert \Delta \left(H^{\operatorname{ou}} \right) \right\rVert_{\operatorname{HS}}, \\
 \left\lVert \Delta \left(CT B_{\operatorname{in}}\right)\right\rVert_{L^2(0,\infty),\operatorname{HS}(\mathbb R^k, \mathbb R^n))} &\le  \left\lVert \Delta\left(H^{\operatorname{ou}}\right) \right\rVert_{\operatorname{HS}}, \text{ and }\\
\left\lVert \Delta \left(CT B\right) \right\rVert_{L^1 ((0,\infty),\operatorname{HS}(\mathbb R^m, \mathbb R^n))} &\le 2\left\lVert \Delta \left(H^{\operatorname{ou}} \right) \right\rVert_{\operatorname{TC}}.
\end{split}
\end{equation}
\end{lemm}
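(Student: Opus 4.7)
My approach is to split the three inequalities according to the three input channels of $H^{\operatorname{ou}}$, namely the direct-sum decomposition $L^{2}((0,\infty),\mathbb{R}^{m})\oplus \mathbb{R}^{d}\oplus \mathbb{R}^{k}$ of its domain (controls, noise directions, admissible initial states). With $\Sigma:=\mathbb{E}(M_{1}M_{1}^{*})$, the unifying observation is that, by picking an orthonormal basis of the domain that respects this decomposition, the Hilbert--Schmidt norm splits as
\[
\|\Delta(H^{\operatorname{ou}})\|_{\operatorname{HS}}^{2}\;=\;\int_{0}^{\infty}\!\|\Delta(CT_{t}K)\sqrt{\Sigma}\|_{\operatorname{HS}}^{2}\,dt\;+\;\int_{0}^{\infty}\!\|\Delta(CT_{t}B_{\operatorname{in}})\|_{\operatorname{HS}}^{2}\,dt\;+\;(\text{control part}),
\]
since on the noise channel $\Delta(H^{\operatorname{ou}})$ sends $v\mapsto \Delta(CT_{\cdot}K)\sqrt{\Sigma}v$ and on the initial-state channel it sends $u\mapsto \Delta(CT_{\cdot}B_{\operatorname{in}})u$. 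In particular each of the three summands on the right is bounded by $\|\Delta(H^{\operatorname{ou}})\|_{\operatorname{HS}}^{2}$.

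For the first inequality I would apply It\^o's isometry to the deterministic integrand $\Delta(CT_{t-s}K)$ against the square-integrable L\'evy process and substitute $\tau=t-s$:
\[
\mathbb{E}\Big\|\int_{0}^{t}\Delta(CT_{t-s}K)\,dM_{s}\Big\|^{2}=\int_{0}^{t}\|\Delta(CT_{\tau}K)\sqrt{\Sigma}\|_{\operatorname{HS}}^{2}\,d\tau .
\]
Averaging over $t\in(0,T)$, swapping the order of integration by Fubini and discarding the factor $(T-\tau)/T\le 1$ bounds the left-hand side by $\int_{0}^{\infty}\|\Delta(CT_{\tau}K)\sqrt{\Sigma}\|_{\operatorname{HS}}^{2}d\tau$; combined with the opening decomposition this yields the first estimate after taking a square root. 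The second estimate is literally the middle term above, so there is nothing further to do.

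The third estimate is the main obstacle, since it must upgrade from an $L^{2}$--Hilbert--Schmidt bound to an $L^{1}$--trace-class bound. My plan is to use the Schmidt decomposition of the control part of $\Delta(H^{\operatorname{ou}})$, which has the (operator-valued) integral kernel $(t,s)\mapsto\Delta(CT_{t+s}B)$. Writing
\[
\Delta(CT_{t+s}B)=\sum_{i}\sigma_{i}\,u_{i}(t)\,v_{i}(s)^{T}
\]
with orthonormal Schmidt pairs $\{u_{i}\},\{v_{i}\}$ and singular values $\sigma_{i}$, the key trick is to evaluate the kernel on the diagonal $(t,s)=(\tau/2,\tau/2)$. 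The triangle inequality, substitution $s=\tau/2$ and Cauchy--Schwarz then give
\[
\int_{0}^{\infty}\!\|\Delta(CT_{\tau}B)\|_{\operatorname{HS}}\,d\tau\;\le\;2\sum_{i}\sigma_{i}\!\int_{0}^{\infty}\!|u_{i}(s)||v_{i}(s)|\,ds\;\le\;2\sum_{i}\sigma_{i}\;\le\;2\|\Delta(H^{\operatorname{ou}})\|_{\operatorname{TC}},
\]
the last inequality using that the trace-class norm of the restriction of an operator to a direct summand of its domain is dominated by that of the full operator.

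The one technical subtlety in this final step is that \emph{a priori} the Schmidt functions $u_{i},v_{i}$ are only $L^{2}$-equivalence classes, whereas the argument needs pointwise values at $\tau/2$. This is resolved by the representation $\sigma_{i}u_{i}(t)=\hat{C}\hat{T}_{t}\hat{R}v_{i}$ (and symmetrically for $v_{i}$) inherited from the error-system operators of Section~\ref{sec:BTIAN}, whose right-hand side is smooth on $(0,\infty)$ because $\hat{T}_{t}$ is an exponentially stable semigroup acting on finite-dimensional vectors; one works throughout with these smooth representatives and the diagonal identity is rigorous.
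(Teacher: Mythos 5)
Your treatment of the first two inequalities is essentially the paper's own argument: It\^o's isometry plus the substitution $\tau=t-s$, averaging over $t\in(0,T)$ and discarding the factor $(T-\tau)/T\le 1$ for the noise term, and reading off the initial-state term directly from the Hilbert--Schmidt norm restricted to the channel $(0,0,u)$ of the domain $L^{2}((0,\infty),\mathbb{R}^{m})\oplus\mathbb{R}^{d}\oplus\mathbb{R}^{k}$. (The paper phrases this as a lower bound on $\lVert\Delta(H^{\operatorname{ou}})\rVert_{\operatorname{HS}}^{2}$ obtained by summing over an incomplete orthonormal system rather than as an exact orthogonal splitting, but the two formulations are interchangeable here; note also that your factorisation $\Delta(CT_{t}K)\sqrt{\Sigma}=\Delta(CT_{t}K\sqrt{\Sigma})$ uses the hypothesis that both systems share the same noise profile, exactly as the paper does.)

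Where you genuinely diverge is the third inequality. The paper disposes of it in one line by citing linear balanced truncation theory, namely Theorem~2.1 of Glover--Curtain--Partington, whereas you re-derive that result from scratch: Schmidt decomposition of the control-channel Hankel kernel $\Delta(CT_{t+s}B)$, evaluation on the diagonal $t=s=\tau/2$, and Cauchy--Schwarz on $\int|u_{i}||v_{i}|$. This is precisely the classical argument behind the cited theorem, so your route buys self-containedness at the cost of having to justify the pointwise diagonal evaluation. Your smoothness argument via $\sigma_{i}u_{i}(t)=\widehat{C}\widehat{T}_{t}w_{i}$ handles each individual Schmidt pair, and it closes the argument completely whenever the Hankel operator has finite rank (e.g.\ finite-dimensional state space $X$). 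For infinite-dimensional $X$ the lemma only assumes $\mathcal H\simeq\mathbb{R}^{n}$, and there the one remaining point is that the Schmidt series must be shown to converge absolutely and represent the kernel \emph{pointwise} on the diagonal, not merely in $L^{2}((0,\infty)^{2})$; this is exactly the technical content of the cited theorem and is not supplied by continuity of the individual terms alone. So: correct and essentially equivalent for the first two bounds, a legitimate but slightly under-justified (in full generality) re-proof of the external citation for the third.
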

\begin{proof}
 To obtain the first bound in \eqref{eq:estm2}, consider the process $X_t:=\int_0^t CT_{t-s}K \ \mathrm dM_s$ such that by Ito's isometry 
 \begin{equation}
 \begin{split}
 \label{eq:equality}
 \frac{1}{T} \left\lVert \Delta(X) \right\rVert_{L^2(\Omega_T)}^2 
 &= \frac{1}{T} \int_0^T \mathbb E \left\lVert \Delta(X_t) \right\rVert^2 \ \mathrm dt \\
 &\overset{\text{Ito's iso.}}{=}  \frac{1}{T}   \int_0^T \int_0^t  \Vert \Delta(CT_{t-s}K)\sqrt{\mathbb E(M_1M_1^*)} \Vert_{\operatorname{HS}}^2 \ \mathrm ds \ \mathrm dt\\ 
  &\overset{t-s \mapsto s}{=}  \frac{1}{T}   \int_0^T \int_0^t  \Vert \Delta(CT_sK)\sqrt{\mathbb E(M_1M_1^*)} \Vert_{\operatorname{HS}}^2 \ \mathrm ds \ \mathrm dt\\ 
 &\overset{(1)}{\le} \frac{1}{T}   \int_0^T \int_0^T  \Vert \Delta(CT_sK\sqrt{\mathbb E(M_1M_1^*)}) \Vert_{\operatorname{HS}}^2 \ \mathrm ds \ \mathrm dt\\
 &\overset{(2)}{=}   \int_0^T  \left\lVert \Delta( CT_s K \sqrt{\mathbb E(M_1M_1^*)} \right\rVert_{\operatorname{HS}}^2 \ \mathrm ds
 \end{split}
 \end{equation}
 where in $(1)$ we extended the integration range from $0$ to $T$ and in $(2)$ we used that the integrand is independent of $t.$
 
 \medskip
 
We now derive a lower bound on the Hilbert-Schmidt norm of the Hankel operator. 
Recall that the Hilbert-Schmidt norm of an operator is defined in \eqref{hsnorm}.

\medskip

Thus, using any ONB $(e_i)_{i \in \mathbb N}$ of $L^2((0,\infty), \mathbb R^n)$ and $(f_j)_{j \in \{1,..,d\}}$ of $\mathbb R^d$, we have the lower bound on the Hilbert-Schmidt norm, since we do not take a complete basis of the input space of the Hankel operator, yields the first estimate in \eqref{eq:estm2}
 \begin{equation}
 \begin{split}
 \label{eq:lb}
\left\lVert \Delta\left(H^{\operatorname{ou}}\right) \right\rVert_{\operatorname{HS}}^2
&\overset{\eqref{hsnorm}}{\ge} \sum_{j=1}^{d}\sum_{i=1}^{\infty}  \left\vert \left\langle \Delta\left(H^{\operatorname{ou}}\right)(0,f_j,0),e_i \right\rangle_{L^2} \right\rvert^2 \\
&\overset{\text{Def.}\ref{defi:OUHankel}}{=}  \sum_{j=1}^{d}\sum_{i=1}^{\infty}  \left\vert \left\langle \Delta \left(   CT_{\bullet} K \sqrt{\mathbb E(M_1M_1^*)} \right)f_j,e_i \right\rangle_{L^2} \right\rvert^2 \\
&\overset{\eqref{hsnorm}}{=}  \int_0^{\infty} \left\lVert \Delta (CT_sK\sqrt{\mathbb E(M_1M_1^*)}) \right\rVert_{\operatorname{HS}}^2 \ \mathrm ds  \overset{\eqref{eq:equality}}{\ge}\frac{1}{T} \left\lVert \Delta(X) \right\rVert_{L^2(\Omega_T)}^2. 
 \end{split}
 \end{equation}
 The second bound in \eqref{eq:estm2} follows straight from the definition of the Hilbert-Schmidt norm by taking an orthonormal basis $(e_i)_{i \in \mathbb N}$ of $L^2((0,\infty),\mathcal H)$ and $(f_i)_{i \in \{1,..,k\}}$ an orthonormal system of $\mathbb R^k$. Then, it follows that 
 \begin{equation}
 \begin{split}
 \Vert \Delta(H) \Vert_{\operatorname{HS}}^2 &\ge \sum_{i=1}^{\infty} \sum_{j=1}^k \vert \langle e_i, \Delta(H)(0,0,f_j)\rangle_{L^2} \vert^2 \\
 &=\left\lVert \Delta \left(CT B_{\operatorname{in}}\right)\right\rVert^2_{L^2((0,\infty),\operatorname{HS}(\mathbb R^k, \mathbb R^n))}.
  \end{split}
 \end{equation}
 The last bound in \eqref{eq:estm2} follows from linear BT theory \cite[Theorem $2.1$]{CGP}.
 \end{proof}
From the preceding estimates we can now obtain the following error bound on the global dynamics.
\begin{theo}[OU Error bound]
\label{theo:stoch}
Consider two OU-processes with the same control function $u \in L^2(\Omega_T,\mathbb R^m)$, see \eqref{eq:L2space}, driven by the same L\'evy processes, but (possibly different) initial conditions $\xi := \sum_{i=1}^k \langle v , \widehat{e_i} \rangle_{\mathbb{R}^k} \phi_i$ and $\widetilde{\xi}:= \sum_{i=1}^k \langle  v , \widehat{e_i} \rangle_{\mathbb{R}^k} \widetilde{\phi_i}$. Here, $(\phi_i)$ is the $L^2(\Omega,\mathcal F_0,X)$-orthonormal system of $B_{\operatorname{in}}.$ The difference between the outputs of two such processes satisfies 
\begin{equation}
\begin{split}
&\frac{\left\lVert \Delta \left(CZ^{\operatorname{ou}}\right) \right\rVert_{L^2(\Omega_T)}}{\sqrt T} \ \le  \left\lVert \Delta(H^{\operatorname{ou}}) \right\rVert_{\operatorname{TC}} \left(1+ \frac{ \left\lVert \xi \right\rVert_{L^{2}(\Omega)} + 2  \left\lVert u \right\rVert_{L^2(\Omega_T)}  }{\sqrt T} \right). 
\end{split}
\end{equation}
\end{theo}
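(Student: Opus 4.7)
The plan is to split $\Delta(CZ^{\operatorname{ou}})$ according to the variation of constants formula \eqref{eq:typea} into three pieces: the homogeneous term driven by the initial condition, the stochastic convolution against $(M_t)$, and the deterministic convolution against the control $u$. Writing $\xi = B_{\operatorname{in}}v$ and $\widetilde{\xi} = \widetilde{B}_{\operatorname{in}}v$ (with the same $\mathbb R^k$-vector $v$ by hypothesis on the initial data), these pieces become $I_{\text{init}}(t):=\Delta(CTB_{\operatorname{in}})(t)\,v$, $I_{\text{noise}}(t):=\int_0^t \Delta(CT_{t-s}K)\,dM_s$, and $I_{\text{ctrl}}(t):=\int_0^t \Delta(CT_{t-s}B)u_s\,ds$. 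The triangle inequality in $L^2(\Omega_T)$ reduces the theorem to bounding each piece separately, and each bound will be supplied by one of the three estimates in Lemma \ref{lemmas}.

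For the initial-state piece, I would use $\|Mv\|^2\le\|M\|_{\operatorname{HS}}^2\|v\|^2$ pointwise in $t$, integrate over $(0,T)$ (enlarged to $(0,\infty)$ for the kernel), and use the orthonormality of $(\phi_i)$ in $L^2(\Omega,X)$ to identify $\mathbb E\|v\|_{\mathbb R^k}^2$ with $\|\xi\|_{L^2(\Omega)}^2$. Combining with the second estimate of Lemma \ref{lemmas} and $\|\cdot\|_{\operatorname{HS}}\le\|\cdot\|_{\operatorname{TC}}$ yields $\|I_{\text{init}}\|_{L^2(\Omega_T)}\le \|\Delta(H^{\operatorname{ou}})\|_{\operatorname{TC}}\,\|\xi\|_{L^2(\Omega)}$. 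The noise piece is handled directly by the first estimate of Lemma \ref{lemmas}, which after multiplying by $\sqrt T$ gives $\|I_{\text{noise}}\|_{L^2(\Omega_T)}\le \sqrt T\,\|\Delta(H^{\operatorname{ou}})\|_{\operatorname{TC}}$; this is where the constant $1$ in the parenthesis arises once the overall inequality is divided by $\sqrt T$.

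For the control piece, set $g(s):=\|\Delta(CT_sB)\|_{\operatorname{HS}}$. Cauchy--Schwarz under the integral bounds $\|I_{\text{ctrl}}(t)\|$ by $(g\ast\|u\|)(t)$, where $u$ is extended by zero outside $(0,T)$. Young's convolution inequality then gives $\|g\ast\|u\|\|_{L^2(0,T)}\le \|g\|_{L^1(0,\infty)}\|u\|_{L^2(0,T)}$, and taking expectations together with the third estimate of Lemma \ref{lemmas} produces $\|I_{\text{ctrl}}\|_{L^2(\Omega_T)}\le 2\,\|\Delta(H^{\operatorname{ou}})\|_{\operatorname{TC}}\,\|u\|_{L^2(\Omega_T)}$. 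Summing the three bounds and dividing by $\sqrt T$ gives exactly the stated inequality.

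The main technical care is in the identification $\mathbb E\|v\|_{\mathbb R^k}^2 = \|\xi\|_{L^2(\Omega)}^2$, which uses $\mathbb E\langle\phi_i,\phi_j\rangle_X=\delta_{ij}$; once this bookkeeping is settled, the proof is a clean assembly of the three estimates in Lemma \ref{lemmas}, the triangle inequality, and Young's convolution inequality. No deeper fact about the Hankel operator is needed beyond what Lemma \ref{lemmas} already encapsulates.
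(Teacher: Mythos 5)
Your proposal is correct and follows essentially the same route as the paper's proof: the same three-way split via the variation of constants formula, the same use of the three estimates in Lemma \ref{lemmas} (including the implicit upgrade from Hilbert--Schmidt to trace norm for the first two pieces), and the same application of Young's convolution inequality to the control term before dividing by $\sqrt{T}$. The only cosmetic difference is that the paper treats $v\in\mathbb R^k$ as deterministic so that $\lVert v\rVert = \lVert \xi\rVert_{L^2(\Omega)}$ directly, whereas you phrase the identification through an expectation; this does not affect the argument.
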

\begin{proof}
We have for $v \in \mathbb R^k$ by orthonormality of $(\phi_i)$ that $\left\lVert v \right\rVert= \left\lVert \xi \right\rVert_{L^2(\Omega)}$ and define $X_t:=\int_0^t CT_{t-s}K \ \mathrm dM_s.$
By Young's inequality, which implies that for $f(s):= \left\lVert \Delta \left(\indic_{[0,\infty)}C T_s B \right) \right\rVert$ and $g(s):=\indic_{[0,T)} \left\lVert u_s \right\rVert$ we have 
\[ \Vert f*g \Vert_{L^2(0,T)}\le \Vert f \Vert_{L^1(0,T)} \Vert g \Vert_{L^2(0,T)},\] and Lemma \ref{lemmas}, it follows that 
\begin{equation*}
\begin{split}
 \left\lVert \Delta \left(CZ^{\operatorname{ou}}\right) \right\rVert_{L^2(\Omega_T)} 
 &\overset{\eqref{eq:typea}}{\le}  \Bigg(\left\lVert \Delta(X) \right\rVert_{L^2(\Omega_T)} +  \left\lVert \Delta(CT B_{\operatorname{in}})(v)\right\rVert_{L^2(\Omega_T)} \\
  &\quad\qquad \qquad \qquad  + \left\lVert \left\lVert \Delta \left(\indic_{[0,\infty)}C T B \right) \right\rVert * \indic_{[0,T)} \left\lVert u \right\rVert \right\rVert_{L^2(\Omega_T)} \Bigg)  \\
 &\overset{\text{Young's ineq.}}{\le}  \Bigg(\left\lVert \Delta(X) \right\rVert_{L^2(\Omega_T)} + \left\lVert \Delta(C T B_{\operatorname{in}})\right\rVert_{(0,\infty)}  \left\lVert \xi \right\rVert_{L^{2}(\Omega)} \\
 & \qquad \qquad \qquad\qquad \qquad  +  \left\lVert \Delta \left(C T B\right) \right\rVert_{L^1(0,\infty)} \left\lVert u \right\rVert_{L^{2}(\Omega_T)} \Bigg)\\
 &\overset{\text{Lemma }\ref{lemmas}}{\le}  \left\lVert \Delta(H^{\operatorname{ou}}) \right\rVert_{\operatorname{TC}} \left(\sqrt{T}+\left\lVert \xi \right\rVert_{L^{2}(\Omega)} + 2\left\lVert u \right\rVert_{L^2(\Omega_T)} \right).
 \end{split}
 \end{equation*} 

 \end{proof}
 
{We can see that the bound in Theorem \ref{theo:stoch} depends on $\left\lVert \Delta(H^{\operatorname{ou}}) \right\rVert_{\operatorname{TC}}$, which is the sum of singular values of the error system. By construction of BT, the associated reduced system keeps the larger Hankel singular values of the original system such that $\left\lVert \Delta(H^{\operatorname{ou}}) \right\rVert_{\operatorname{TC}}$ and hence the error is expected to be small whenever the second system is a reduced model by BT with appropriate reduced order dimension.}

\section{Linear systems with multiplicative noise}
\label{sec:bilstoch}
{{In this section, a bound for the output error between two S(P)DEs of the form \eqref{eq:bil} is proved. It is based on the singular values of the associated error system and therefore requires the study of suitable Gramians. This bound provides an a posteriori criterion for the approximation error, e.g., in the context of model order reduction. The solution to the linear S(P)DE is given as the sum of the homogeneous process satisfying}}
 \begin{equation}
\begin{split}
\label{eq:hom}
 &d \mathcal H^{\operatorname{lin}}_{t} = A \mathcal H^{\operatorname{lin}}_{t} \ \mathrm dt + N \mathcal H^{\operatorname{lin}}_{t} \ \mathrm dM_t,\text{ such that } \\
 &\mathcal H_{0}^{\text{lin}}=\xi
\end{split}
\end{equation}
and the solution to the inhomogeneous problem starting from zero
 \begin{equation}
\begin{split}
\label{eq:per}
 &d \mathcal I^{\operatorname{lin}}_t = A\mathcal I^{\operatorname{lin}}_t \ \mathrm dt + N \mathcal I^{\operatorname{lin}}_{t} \ \mathrm dM_t + Bu_t \ \mathrm dt,\text{ such that } \\
 &\mathcal I_{0}^{\operatorname{lin}}=0.
 \end{split}
\end{equation}
The solution to the homogeneous equation \eqref{eq:hom}, started at time $s$ from state $\xi$, defines a flow $\mathcal H_t^{\operatorname{lin}}=:\Phi^{\operatorname{lin}}_{t,s} \xi$. If the initial time is $s=0$, we just write $\Phi^{\operatorname{lin}}_t:=\Phi^{\operatorname{lin}}(t,0).$ 
{We now introduce a stability criterion for linear systems with multiplicative noise which is necessary to ensure dissipative dynamics.} 
\begin{ass}[Linear systems with multiplicative noise]
\label{ass:lin}
We make the assumption that $\Phi^{\operatorname{lin}}$ is exponentially stable in mean square sense, i.e.\@ there are $\gamma,c>0$ such that for all $\xi \in L^2(\Omega,\mathcal F_s,X)$ and $t \ge s$
 \begin{equation}
 \label{eq:flow}
  \mathbb E \left(\Vert (\Phi^{\operatorname{lin}}_{t,s} \xi) \Vert^2\right) \le  \gamma  e^{-c(t-s)} \mathbb E \Vert \xi \Vert^2.
  \end{equation}
Moreover, we assume that $(M_t)_{t \ge 0}$ is a square-integrable scalar-valued mean zero L\'evy process.
\end{ass}
We use the following representation of the homogeneous solution with flow $\mathcal H^{\operatorname{lin}}_{t}=:\Phi^{\operatorname{lin}}_t \xi$ such that
\begin{equation}
\begin{split}
\label{eq:process}
&CZ^{\operatorname{lin}}_t:=C\mathcal H^{\operatorname{lin}}_{t} +C\mathcal I^{\operatorname{lin}}_{t}  =C\Phi^{\operatorname{lin}}_t \xi + \int_0^t C \Phi^{\operatorname{lin}}_{t,s}B u_s \ \mathrm ds.
\end{split}
\end{equation}
This expression coincides with the output of the mild solution as discussed in \cite[(5.4)ff.]{BH19}.
The observability and reachability Gramian for linear systems with multiplicative noise are for $x,y \in X$ defined as
\begin{equation}
\begin{split}
\label{eq:bilGram}
\langle x, \mathscr O^{\operatorname{lin}} y \rangle_X &=\mathbb E \int_0^{\infty} \left\langle C \Phi^{\operatorname{lin}}_s x,C \Phi^{\operatorname{lin}}_s y \right\rangle_{\mathcal H} \ \mathrm ds \\
\langle x,\mathscr P^{\operatorname{lin}}y \rangle_X &= \mathbb E \int_0^{\infty} \left\langle x, (\Phi^{\operatorname{lin}}_s B)( \Phi^{\operatorname{bil}}_sB)^{*} y \right\rangle_X \ \mathrm ds + \left\langle x,B_{\operatorname{in}}B_{\operatorname{in}}^{*} y \right\rangle_X.
\end{split}
\end{equation}
To decompose the Gramians as 
\begin{equation}
\label{eq:decomposition}
\mathscr O^{\operatorname{lin}}=W^{\operatorname{lin}*}W^{\operatorname{lin}} \text{ and }\mathscr P^{\operatorname{lin}} =R^{\operatorname{lin}}R^{\operatorname{lin}*},
\end{equation}
we introduce observability $W^{\operatorname{lin}} \in \mathcal L(X,L^2(\Omega_{\infty}, \mathcal H))$ and reachability maps $R^{\operatorname{lin}} \in \operatorname{HS}( L^2(\Omega_{\infty}, \mathbb R^m) \oplus \mathbb R^k,X)$ defined as
\begin{equation}
\begin{split}
(W^{\operatorname{lin}}x)_t:=C\Phi^{\operatorname{lin}}_tx \text{ and } R^{\operatorname{lin}}(f,u):=\mathbb E \int_0^{\infty} \Phi^{\operatorname{lin}}_sBf_s \ \mathrm ds  + B_{\operatorname{in}}u.
\end{split}
\end{equation}
A straightforward computation shows that the above operators indeed satisfy \eqref{eq:decomposition}.
{The main theoretical tool for our study is the Hankel operator which we shall introduce next.}
\begin{defi}[Hankel operator]
\label{defi:bilHank}
The Hankel operator for the linear system with multiplicative noise is the Hilbert-Schmidt operator defined as 
\[ H^{\operatorname{lin}} := W^{\operatorname{lin}}R^{\operatorname{lin}} \in \operatorname{HS}(L^2(\Omega_{\infty}, \mathbb R^m) \oplus \mathbb R^k, \mathcal H) \]
and is trace-class if $\mathcal H$ is finite-dimensional.
\end{defi}
The above Hilbert-Schmidt and trace-class properties follow from the same arguments as in \cite[Sec. $5.2$]{BH19}. Adding the operator $B_{\operatorname{in}}$ to $R^{\operatorname{lin}}$ does not affect these properties as $B_{\operatorname{in}}$ is a finite rank operator.

\medskip

{The Gramians \eqref{eq:bilGram} satisfy Lyapunov equations given in the following proposition. This fact is very useful for the practical computation of these Gramians since such equations can be solved even in very high-dimensional settings.}
\begin{prop}[Lyapunov equations]
\label{prop:Lyapbil}
The stochastic Gramians for the system with multiplicative noise satisfy the following Lyapunov equations for all $x_1,y_1 \in D(A^{*})$ and $x_2,y_2 \in D(A)$
\begin{equation*}
\begin{split}
&\langle x_1, BB^{*} \ y_1 \rangle_X +  \langle A^{*}x_1,(\mathscr P^{\operatorname{lin}} -B_{\operatorname{in}}B_{\operatorname{in}}^{*})y_1  \rangle_X +\langle  x_1,\mathscr (\mathscr P^{\operatorname{lin}} -B_{\operatorname{in}}B_{\operatorname{in}}^{*})A^{*}y_1 \rangle_X  \\
&\qquad + \langle   N^{*}x_1,(\mathscr P^{\operatorname{lin}} -B_{\operatorname{in}}B_{\operatorname{in}}^{*}) N^{*}y_1  \rangle_X\ \mathbb E\left(M_1^2\right)=0  \text{ and } \\
&\langle x_2, C^{*}C y_2 \rangle_X + \langle Ax_2, \mathscr O^{\operatorname{lin}}y_2 \rangle_X + \langle x_2, \mathscr O^{\operatorname{lin}}  Ay_2 \rangle_X + \langle Nx_2, \mathscr O^{\operatorname{lin}}  Ny_2 \rangle_X \ \mathbb{E}(M_1^2) =0.
\end{split}
\end{equation*}
\end{prop}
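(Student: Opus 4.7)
The plan is to apply It\^o's product rule to bilinear functionals of the homogeneous flow $\Phi^{\operatorname{lin}}$ and its adjoint, and to remove the boundary term at $t=\infty$ via the mean-square exponential stability of Assumption~\ref{ass:lin}. The martingale parts of the It\^o expansions vanish after taking expectations, and the quadratic variation of $M$ contributes the factor $\mathbb E(M_1^2)$ via $\mathbb E\,d[M]_s = \mathbb E(M_1^2)\,ds$. The two identities require slightly different arguments because $\Phi^{\operatorname{lin}}_t$ sits on the left in $\mathscr P^{\operatorname{lin}}-B_{\operatorname{in}}B_{\operatorname{in}}^{*}$ but on the right in $\mathscr O^{\operatorname{lin}}$.

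\emph{Observability.} Fix $x_2,y_2\in D(A)$ and set $f(s):=\mathbb E\langle \Phi^{\operatorname{lin}}_s x_2,\mathscr O^{\operatorname{lin}}\Phi^{\operatorname{lin}}_s y_2\rangle_X$. Applying It\^o's product rule to the two semimartingales $\Phi^{\operatorname{lin}}_s x_2$ and $\mathscr O^{\operatorname{lin}}\Phi^{\operatorname{lin}}_s y_2$ and taking expectations gives
\[ f'(s)=\mathbb E\langle A\Phi^{\operatorname{lin}}_s x_2,\mathscr O^{\operatorname{lin}}\Phi^{\operatorname{lin}}_s y_2\rangle+\mathbb E\langle \Phi^{\operatorname{lin}}_s x_2,\mathscr O^{\operatorname{lin}} A\Phi^{\operatorname{lin}}_s y_2\rangle+\mathbb E(M_1^2)\,\mathbb E\langle N\Phi^{\operatorname{lin}}_s x_2,\mathscr O^{\operatorname{lin}} N\Phi^{\operatorname{lin}}_s y_2\rangle, \]
whose value at $s=0$ is precisely the three $A,N$-dependent terms of the claim. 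On the other hand, the cocycle identity $\Phi^{\operatorname{lin}}_{s+r}=\Phi^{\operatorname{lin}}_{s,s+r}\Phi^{\operatorname{lin}}_s$, with $\Phi^{\operatorname{lin}}_{s,s+r}$ independent of $\mathcal F_s$ and equidistributed with $\Phi^{\operatorname{lin}}_r$ (Markov/stationarity property of the L\'evy-driven flow), yields the representation $f(s)=\mathbb E\int_s^{\infty}\langle C\Phi^{\operatorname{lin}}_r x_2,C\Phi^{\operatorname{lin}}_r y_2\rangle_{\mathcal H}\,dr$, whose derivative at $s=0$ equals $-\langle x_2,C^{*}C\,y_2\rangle$. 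Equating the two expressions for $f'(0)$ delivers the observability equation.

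\emph{Reachability.} Let $x_1,y_1\in D(A^{*})$ and $W_t:=\mathbb E[\Phi^{\operatorname{lin}}_t BB^{*}\Phi^{\operatorname{lin}*}_t]$, so that $\int_0^{\infty} W_t\,dt=\mathscr P^{\operatorname{lin}}-B_{\operatorname{in}}B_{\operatorname{in}}^{*}$ by \eqref{eq:bilGram}. Taking adjoints in \eqref{eq:hom} gives $d(\Phi^{\operatorname{lin}*}_t z)=\Phi^{\operatorname{lin}*}_t A^{*} z\,dt+\Phi^{\operatorname{lin}*}_t N^{*} z\,dM_t$ for $z\in D(A^{*})$, and It\^o's product rule applied to $\langle \Phi^{\operatorname{lin}*}_t x_1,BB^{*}\Phi^{\operatorname{lin}*}_t y_1\rangle$ yields
\[ \frac{d}{dt}\langle x_1,W_t y_1\rangle=\langle A^{*}x_1,W_t y_1\rangle+\langle x_1,W_t A^{*}y_1\rangle+\mathbb E(M_1^2)\,\langle N^{*}x_1,W_t N^{*}y_1\rangle. \]
Integrating over $t\in[0,\infty)$ with boundary values $W_0=BB^{*}$ and $W_{\infty}=0$ (the latter by Assumption~\ref{ass:lin}), and pulling the deterministic $A^{*},N^{*}$ through the time integral, gives the reachability equation.

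The hard part is the rigorous application of It\^o's formula when $A,N$ are unbounded: the invariance of $D(A)$ under $(\Phi^{\operatorname{lin}}_t)$ and of $D(A^{*})$ under the adjoint flow, together with the correct formulation of the adjoint SDE, will likely require a Yosida or mild-solution approximation argument. A subtler point specific to the observability equation is that naively integrating the It\^o identity $f'(s)=\cdots$ over $[0,\infty)$ produces an expression with $A^{*},A,N^{*}C^{*}CN$ sandwiched \emph{between} $\Phi^{\operatorname{lin}*}_s$ and $\Phi^{\operatorname{lin}}_s$, which is not equal to the desired $A^{*}\mathscr O^{\operatorname{lin}}+\mathscr O^{\operatorname{lin}} A+\mathbb E(M_1^2) N^{*}\mathscr O^{\operatorname{lin}} N$; evaluating at the single point $s=0$ and pulling the $C^{*}C$ contribution out of the Markov representation sidesteps this mismatch.
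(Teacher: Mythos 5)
Your proof is correct in substance, but it takes a genuinely different route from the paper. The paper's proof is a one-line reduction: it observes that $\mathscr O^{\operatorname{lin}}$ and $\mathscr P^{\operatorname{lin}}-B_{\operatorname{in}}B_{\operatorname{in}}^{*}$ coincide exactly with the observability and reachability Gramians of the standard stochastic balanced truncation framework (the $B_{\operatorname{in}}B_{\operatorname{in}}^{*}$ shift simply cancels), and then cites \cite[Lemma~5.6]{BH18} for the Lyapunov equations. You instead rederive those equations from scratch via It\^o's product rule on the flow and its adjoint, using mean-square exponential stability to kill the boundary term at infinity and the compensator identity $\mathbb E\,d[M]_s=\mathbb E(M_1^2)\,ds$ to produce the $N$-terms. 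What your route buys is self-containedness and transparency about where each term comes from; in particular, you correctly identify the genuine asymmetry between the two Gramians --- the naive integration of the It\^o identity for the observability functional leaves $A$ sandwiched between $\Phi_s^{*}$ and $C^{*}C\Phi_s$, and your fix (evaluate the derivative at $s=0$ and use the Markov/cocycle representation $f(s)=\int_s^{\infty}\mathbb E\langle C\Phi_r x_2,C\Phi_r y_2\rangle\,dr$ for the $C^{*}C$ contribution) is exactly the right workaround, whereas the reachability side goes through directly because the adjoint SDE places $A^{*}$ outside the flow. What the paper's route buys is that all of the technical machinery you flag but do not carry out --- invariance of domains under the flow, the rigorous formulation of the adjoint SDE for unbounded $A$ and $N$, justification of the It\^o formula in the mild-solution setting, and the Fubini/integrability arguments --- is delegated to the cited lemma rather than reproved. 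As a standalone argument yours is complete only in the finite-dimensional (or bounded-generator) case; for the general statement as posed, with $x_1,y_1\in D(A^{*})$ and $x_2,y_2\in D(A)$, the approximation argument you defer to is genuinely needed and is precisely what the citation supplies.
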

\begin{proof}
It suffices to observe that the observability Gramian and $\mathscr P^{\operatorname{lin}} -B_{\operatorname{in}}B_{\operatorname{in}}^{*} $ coincide with the observability and reachability Gramian in \cite{BH19}. The Lyapunov equations are then stated in \cite[Lemma $5.6$]{BH19}.
\end{proof}
{Our next Lemma provides some auxiliary results that are relevant for the final error estimate of the difference of the stochastic dynamics in terms of the Hankel operator.}
\begin{lemm} 
\label{lemmabil}
Let $\mathcal H$ be a finite-dimensional space, we consider two linear multiplicative systems with the same or two i.i.d. square-integrable mean zero L\'evy processes $(M_t)_{t \ge 0}$ each, then the difference of Hankel operators $\Delta(H^{\operatorname{lin}})$ satisfies
\begin{equation}
\begin{split}
\label{eq:estmbil}
 \left\lVert \Delta \left(C\Phi^{\operatorname{lin}} B_{\operatorname{in}}\right)\right\rVert_{L^2(\Omega_{\infty},\operatorname{HS}(\mathbb R^k, \mathbb R^n))} &\le  \left\lVert \Delta\left(H^{\operatorname{lin}}\right) \right\rVert_{\operatorname{HS}}\text{ and }\\
\left\lVert \Delta \left(C\Phi^{\operatorname{lin}} B\right) \right\rVert_{L^1_tL^2_{\omega}(\Omega_{\infty},\operatorname{HS}(\mathbb R^m, \mathbb R^n))} &\le 2\left\lVert \Delta \left(H^{\operatorname{lin}} \right) \right\rVert_{\operatorname{TC}}.
\end{split}
\end{equation}
\end{lemm}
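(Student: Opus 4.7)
The plan is to mirror the proof of Lemma \ref{lemmas}, replacing the deterministic semigroup $(T_t)$ by the random flow $\Phi^{\operatorname{lin}}_t$ and the output space $L^2((0,\infty),\mathcal H)$ by $L^2(\Omega_\infty,\mathcal H)$. I would work throughout with the error system \eqref{eq:compsys}, so that $\Delta(H^{\operatorname{lin}})=\widehat H^{\operatorname{lin}}=\widehat W^{\operatorname{lin}}\widehat R^{\operatorname{lin}}$.

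For the first inequality, I would transplant the calculation \eqref{eq:lb} to the stochastic setting. Fix an orthonormal basis $(f_j)_{j=1}^k$ of $\mathbb R^k$ and any ONB $(e_i)_{i\in\mathbb N}$ of $L^2(\Omega_\infty,\mathcal H)$. Since $\{(0,f_j)\}_{j=1}^k$ is an orthonormal family but not a complete basis of the input space $L^2(\Omega_\infty,\mathbb R^m)\oplus\mathbb R^k$ of $\widehat H^{\operatorname{lin}}$, restricting the sum in \eqref{hsnorm} to these inputs yields
\[
\|\Delta(H^{\operatorname{lin}})\|_{\operatorname{HS}}^2 \;\ge\; \sum_{j=1}^k\sum_{i\in\mathbb N}\bigl|\langle e_i,\Delta(H^{\operatorname{lin}})(0,f_j)\rangle_{L^2(\Omega_\infty,\mathcal H)}\bigr|^2.
\]
By Definition \ref{defi:bilHank}, $\Delta(H^{\operatorname{lin}})(0,f_j)$ is the random function $t\mapsto \Delta(C\Phi^{\operatorname{lin}}_t B_{\operatorname{in}})f_j$. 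Parseval in $i$ together with Fubini in $j$ identifies the right-hand side with $\mathbb E\int_0^\infty\|\Delta(C\Phi^{\operatorname{lin}}_t B_{\operatorname{in}})\|_{\operatorname{HS}}^2\,dt$, which is the square of the claimed $L^2$-norm.

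For the second inequality, I would follow the deterministic balanced-truncation $L^1$-bound of \cite[Thm.~2.1]{CGP} adapted via the stochastic Hankel framework of \cite[Sec.~5.2]{BH18}. The key input is the pathwise cocycle identity $\Phi^{\operatorname{lin}}_t=\Phi^{\operatorname{lin}}_{t,t/2}\Phi^{\operatorname{lin}}_{t/2}$, where by the independent-increment property of $(M_t)$ the two factors are independent and $\Phi^{\operatorname{lin}}_{t,t/2}$ shares the law of $\Phi^{\operatorname{lin}}_{t/2}$. Combining this pathwise with the Schatten--H\"older estimate $\|AB\|_{\operatorname{HS}}\le\|AB\|_{\operatorname{TC}}\le\|A\|_{\operatorname{HS}}\|B\|_{\operatorname{HS}}$ and then averaging yields
\[
\bigl(\mathbb E\|\Delta(C\Phi^{\operatorname{lin}}_t B)\|_{\operatorname{HS}}^2\bigr)^{1/2}\;\le\;\bigl(\mathbb E\|\widehat C\widehat\Phi^{\operatorname{lin}}_{t/2}\|_{\operatorname{HS}}^2\bigr)^{1/2}\bigl(\mathbb E\|\widehat\Phi^{\operatorname{lin}}_{t/2}\widehat B\|_{\operatorname{HS}}^2\bigr)^{1/2}.
\]
Integrating in $t$, substituting $s=t/2$ (producing the factor $2$) and applying Cauchy--Schwarz in $s$ give the bound $2\|\widehat W^{\operatorname{lin}}\|_{\operatorname{HS}}\|\widehat R^{\operatorname{lin}}\|_{\operatorname{HS}}$, after noting that the restriction of $\widehat R^{\operatorname{lin}}$ to the control subspace has HS norm bounded by that of $\widehat R^{\operatorname{lin}}$. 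In the balanced realization of the error system, simultaneous diagonalization of $\widehat O^{\operatorname{lin}},\widehat P^{\operatorname{lin}}$ gives $\|\widehat W^{\operatorname{lin}}\|_{\operatorname{HS}}^2=\|\widehat R^{\operatorname{lin}}\|_{\operatorname{HS}}^2=\|\Delta(H^{\operatorname{lin}})\|_{\operatorname{TC}}$; since the $L^1_t L^2_\omega$ norm of $\Delta(C\Phi^{\operatorname{lin}} B)$ is invariant under state-space changes of basis, the claimed bound $2\|\Delta(H^{\operatorname{lin}})\|_{\operatorname{TC}}$ follows.

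The main obstacle will be the second bound: in the deterministic case the half-time factorization $T_t=T_{t/2}T_{t/2}$ commutes cleanly with Lebesgue integration, whereas here the random flow forces one to combine the pathwise Schatten--H\"older inequality with the independence and stationarity of Lévy increments before averaging. Producing exactly the $L^1_t L^2_\omega$ pairing on the left---rather than $L^2_t L^2_\omega$---hinges on taking the $L^2_\omega$ norms inside via independence \emph{before} applying Cauchy--Schwarz in $t$; this ordering is the natural fit for the trace-class Hankel bound and carries the argument through.
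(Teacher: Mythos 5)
Your proof is correct and follows essentially the same route as the paper: the first bound is obtained identically by restricting the Hilbert--Schmidt sum to the orthonormal inputs $(0,f_j)$. For the second bound the paper simply cites \cite[Theorem 3, (5.11)]{BH18} together with the observation that the coupled error system remains Markov when both blocks are driven by the same noise; the half-time factorization, independence-then-Cauchy--Schwarz argument you reconstruct is precisely the content of that cited proof.
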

\begin{proof}
The first bound in \eqref{eq:estmbil} follows straight from the definition of the Hilbert-Schmidt norm, i.e. let $(f_j)_{j \in \{1,..,k\}}$ be an orthonormal basis of $\mathbb R^k$ and $(e_i)_{i \in \mathbb{N}}$ an orthonormal basis of $L^2(\Omega_{(0,\infty)},\mathcal H).$ This implies that 
\[ \left\lVert \Delta\left(H^{\operatorname{lin}}\right) \right\rVert^2_{\operatorname{HS}} \ge \sum_{j=1}^k \sum_{i=1}^{\infty} \vert \langle e_i,\Delta(H^{\operatorname{lin}})(0,f_j) \rangle_{L^2} \vert^2 = \left\lVert \Delta \left(C\Phi^{\operatorname{lin}} B_{\operatorname{in}}\right)\right\rVert_{L^2(\Omega_{\infty},\operatorname{HS}(\mathbb R^k, \mathbb R^n))}^2.\]
The second bound has been derived in \cite[Theorem $3$, (5.11)]{BH19} under the assumption that the noise profiles are independent. In the case of the same noise profile, the same proof as for \cite[Theorem $3$]{BH19} applies. This is because the flow of the coupled system $\widehat{Z_t}=(Z_t,\widetilde{Z_t})$ is a Markov process, which is the key property used in \cite[(5.12)]{BH19}.

The Markov property of $\widehat{Z_t}$ follows, since $\widehat{Z_t}$ is a solution to the S(P)DE
\begin{equation}
    \begin{split}
        d\widehat{Z_t}^{\operatorname{lin}} = \widehat{A}^{\operatorname{lin}} \widehat{Z_t}^{\operatorname{lin}} \ \mathrm dt + \widehat{N}^{\operatorname{lin}}\widehat{Z_t}^{\operatorname{lin}} \ \mathrm dM_t +  \widehat{B}^{\operatorname{lin}} u_t \ \mathrm dt,
    \end{split}
\end{equation}
where we used the notation introduced in \eqref{eq:compsys}. The solution to this system satisfies the Markov property \cite[Sec.\@$9.6$]{P15}.

\end{proof}
{We are now ready to state our main error bound.}
\begin{theo}[Error bound]
\label{theo:stochbil}
Consider two linear systems with multiplicative noise.
For initial conditions $\xi = \sum_{i=1}^k \langle v, \widehat{e_i} \rangle_{\mathbb{R}^k} \xi_i$ with $L^2(\Omega,\mathcal F_0,X)$-orthonormal system $(\xi_i),$ and $\widetilde{\xi}:= \sum_{i=1}^k \langle  v, \widehat{e_i} \rangle_{\mathbb{R}^k} \widetilde{\xi_i},$ it follows that for two L\'evy processes $(M_t)_{t \ge 0}$, which we assume to be either the same or independent, each one of them driving the dynamics of a linear system with multiplicative noise, we have for control functions $u \in L^2_{\operatorname{ad}}(\Omega_{\infty},\mathbb R^m)$ that  
\begin{equation}
\begin{split}
\left\lVert \Delta \left(CZ^{\operatorname{lin}}\right) \right\rVert_{L^2_tL^1_{\omega}(\Omega_{\infty})} \ &\le  \left\lVert \Delta(H^{\operatorname{lin}}) \right\rVert_{\operatorname{TC}} \left(\left\lVert \xi \right\rVert_{L^{2}(\Omega)} + 2 \left\lVert u \right\rVert_{L^2(\Omega_{\infty})}\right) 
\end{split}
\end{equation}
and for control functions $u \in L^2_tL^{\infty}_{\omega}(\Omega_{\infty},\mathbb{R}^n)$ we have 
\begin{equation} 
\begin{split}
\left\lVert \Delta \left(CZ^{\operatorname{lin}}\right) \right\rVert_{L^2(\Omega_{\infty})} \ &\le  \left\lVert \Delta(H^{\operatorname{lin}}) \right\rVert_{\operatorname{TC}} \left(\left\lVert \xi \right\rVert_{L^{2}(\Omega)} + 2 \left\lVert u \right\rVert_{L^{2}_{t}L^{\infty}_{\omega}(\Omega_{\infty})}\right)
\end{split}
\end{equation}
\end{theo}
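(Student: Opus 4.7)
The plan is to mimic the structure of the proof of Theorem \ref{theo:stoch}, splitting $\Delta(CZ^{\operatorname{lin}})$ into an initial-condition part and a control part by the variation-of-constants representation \eqref{eq:process}. Note that, unlike in the OU case, there is no additive stochastic integral $\int_0^t C T_{t-s} K\,dM_s$ to handle, because the noise has been absorbed into the flow $\Phi^{\operatorname{lin}}_{t,s}$. By the triangle inequality for $\|\cdot\|_{L^2_tL^1_\omega}$ (resp.\ $\|\cdot\|_{L^2}$), it is enough to bound
\[
\|\Delta(C\Phi^{\operatorname{lin}}\xi)\|_\star \quad\text{and}\quad \Bigl\|\Delta\!\Bigl(\int_0^\cdot C\Phi^{\operatorname{lin}}_{\cdot,s} B u_s\,ds\Bigr)\Bigr\|_\star
\]
separately, where $\star$ denotes the relevant norm on $\Omega_\infty$.

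For the initial-condition term I would use the representation $\xi = B_{\operatorname{in}}v$ with $\|v\|_{\mathbb R^k}=\|\xi\|_{L^2(\Omega)}$, and the fact that $v$ is $\mathcal F_0$-measurable and hence independent of the noise increments driving $\Phi^{\operatorname{lin}}_t$. Cauchy--Schwarz in $\omega$ together with the first estimate of Lemma \ref{lemmabil} and $\|\cdot\|_{\operatorname{HS}}\le\|\cdot\|_{\operatorname{TC}}$ then yields
\[
\bigl\|\Delta(C\Phi^{\operatorname{lin}}\xi)\bigr\|_{L^2_tL^1_\omega},\ \bigl\|\Delta(C\Phi^{\operatorname{lin}}\xi)\bigr\|_{L^2(\Omega_\infty)} \ \le\ \|\Delta(H^{\operatorname{lin}})\|_{\operatorname{TC}}\,\|\xi\|_{L^2(\Omega)}.
\]

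The main work is on the control term. The crucial structural input is that the operator-valued random variable $\Delta(C\Phi^{\operatorname{lin}}_{t,s}B)$ depends only on the L\'evy increments on $[s,t]$, so by adaptedness of $u$ it is independent of $u_s$, and by stationarity of the flow it has the same law as $\Delta(C\Phi^{\operatorname{lin}}_{t-s}B)$ (this is exactly the Markov property used in the proof of Lemma \ref{lemmabil}). Setting
\[
g(r):=\sqrt{\mathbb E\|\Delta(C\Phi^{\operatorname{lin}}_r B)\|_{\operatorname{HS}}^2},
\]
and applying Cauchy--Schwarz in $\omega$ in the $L^2_tL^1_\omega$ case (resp.\ Minkowski's integral inequality for the $L^2_\omega$ norm in the $L^2$ case), I would bound the required norm of the control integral at time $t$ by a deterministic convolution $(g*h)(t)$, where $h(s)=\sqrt{\mathbb E\|u_s\|^2}$ or $h(s)=\|u_s\|_{L^\infty_\omega}$ respectively. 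Young's convolution inequality $\|g*h\|_{L^2_t}\le\|g\|_{L^1_t}\|h\|_{L^2_t}$ then reduces everything to $\|g\|_{L^1_t}=\|\Delta(C\Phi^{\operatorname{lin}}B)\|_{L^1_tL^2_\omega}$, which is at most $2\|\Delta(H^{\operatorname{lin}})\|_{\operatorname{TC}}$ by the second estimate of Lemma \ref{lemmabil}. Summing the two contributions gives the claimed bounds in both cases.

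The main obstacle is the careful probabilistic handling of the control integral: one must rigorously separate the flow $\Phi^{\operatorname{lin}}_{t,s}$ from the control $u_s$ via the Markov/adaptedness argument before one can invoke Young's inequality, and the choice of the Minkowski step (versus Cauchy--Schwarz) is what distinguishes the two bounds. The initial-condition term and the reduction to $\|\Delta(H^{\operatorname{lin}})\|_{\operatorname{TC}}$ at the end are then straightforward consequences of Lemma \ref{lemmabil}.
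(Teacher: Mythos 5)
Your proposal is correct and follows essentially the same route as the paper: the same decomposition of $\Delta(CZ^{\operatorname{lin}})$ via \eqref{eq:process} into homogeneous and control parts, the same use of both estimates of Lemma \ref{lemmabil}, H\"older/Cauchy--Schwarz in $\omega$ for the $L^2_tL^1_\omega$ bound versus Minkowski's integral inequality for the $L^2$ bound, the Markov property to pass from $\Phi^{\operatorname{lin}}_{t,s}$ to $\Phi^{\operatorname{lin}}_{t-s}$, and Young's convolution inequality to close the argument. The only cosmetic difference is your explicit appeal to independence of $\Delta(C\Phi^{\operatorname{lin}}_{t,s}B)$ from $u_s$, which the paper's H\"older step does not actually require.
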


\begin{proof}
From \eqref{eq:process} we find that
\begin{equation}
\begin{split}
\label{eq:estone}
&\left\lVert \Delta(CZ^{\operatorname{lin}}) \right\rVert_{L^2_t L^1_{\omega}(\Omega_{\infty})} \le \left\lVert \Delta(C\mathcal H^{\operatorname{lin}}) \right\rVert_{L^2_t L^1_{\omega}(\Omega_{\infty})} +\left\lVert \Delta(C\mathcal I^{\operatorname{lin}}) \right\rVert_{L^2_t L^1_{\omega}(\Omega_{\infty})} \text{ and } \\ 
&\left\lVert \Delta(CZ^{\operatorname{lin}}) \right\rVert_{L^2(\Omega_{\infty})} \le \left\lVert \Delta(C\mathcal H^{\operatorname{lin}}) \right\rVert_{L^2(\Omega_{\infty})} +\left\lVert \Delta(C\mathcal I^{\operatorname{lin}}) \right\rVert_{L^2(\Omega_{\infty})}.
\end{split}
\end{equation}

For the first terms on the right-hand side of \eqref{eq:estone} we have using 
\begin{itemize}
\item the Cauchy-Schwarz inequality in (1), 
\item the explicit expression for the homogeneous solution in (2), and 
\item the first estimate of \eqref{eq:estmbil} in (3)
\end{itemize}
that
\begin{equation}
\begin{split}
\label{eq:prelimestm}
\left\lVert \Delta(C\mathcal H^{\operatorname{lin}}) \right\rVert_{L^2_t L^1_{\omega}(\Omega_{\infty})} &\overset{(1)}{\le} \left\lVert \Delta(C\mathcal H^{\operatorname{lin}}) \right\rVert_{L^2(\Omega_{\infty})}\\
&\overset{(2)}{\le}  \left\lVert \Delta \left(C\Phi^{\operatorname{lin}} B_{\operatorname{in}}\right)\right\rVert_{L^2(\Omega_{\infty},\operatorname{HS}(\mathbb R^k, \mathbb R^n))}\left\lVert \xi \right\rVert_{L^{2}(\Omega)}\\
&\overset{(3)}{\le} \left\lVert \Delta\left(H^{\operatorname{lin}}\right) \right\rVert_{\operatorname{HS}} \left\lVert \xi \right\rVert_{L^{2}(\Omega)}.
\end{split}
\end{equation}

To estimate the second terms on the right-hand side of \eqref{eq:estone} we require some additional estimates on the inhomogeneous flow \eqref{eq:per}
\begin{equation}
\begin{split}
\label{eq:d2eq}
&\Vert \Delta(C \mathcal I^{\text{lin}}) \Vert_{L^2_t L^1_{\omega}(\Omega_{\infty})}^2\le\int_0^{\infty} \left(\mathbb E \int_0^t \left\Vert \Delta(C\Phi_{t,s}B)  \right\rVert \left\lVert u_s \right\rVert \ \mathrm ds \right)^2 \ \mathrm dt \\
&\overset{(1)}{\le} \int_0^{\infty} \left( \int_0^t \sqrt{\mathbb E(\left\Vert \Delta(C\Phi_{t,s}B)  \right\rVert^2)} \sqrt{\mathbb E(\left\lVert u_s \right\rVert^2)} \ \mathrm ds \right)^2 \ \mathrm dt \\
&\overset{(2)}{\le} \int_0^{\infty} \left( \int_0^t \sqrt{\mathbb E(\left\Vert \Delta(C\Phi_{t-s}B)  \right\rVert^2_{\operatorname{HS}})} \sqrt{\mathbb E(\left\lVert u_s \right\rVert^2)} \ \mathrm ds \right)^2 \ \mathrm dt \\
&\overset{(3)}{=} \int_{\mathbb{R}} \left( \int_{\mathbb R} \indic_{[0,\infty)}(t-s) \sqrt{\mathbb E(\left\Vert \Delta(C\Phi_{t-s}B)  \right\rVert_{\operatorname{HS}}^2)} \indic_{[0,\infty)}(s)\sqrt{\mathbb E(\left\lVert u_s \right\rVert^2)} \ \mathrm ds \right)^2 \ \mathrm dt.
\end{split}
\end{equation}
In (1) we applied H\"older's inequality in the expectation value and in (2) we use the Markov property, cf. \cite[(5.15)]{BH19}. In (3) we just rewrote the expression using indicator functions to make the convolutional structure more apparent.
If we then introduce auxiliary functions $f(s):=\indic_{[0,\infty)}(s) \sqrt{\mathbb E(\left\Vert \Delta(C\Phi_{s}B)  \right\rVert_{\operatorname{HS}}^2)}$ and $g(s):=\indic_{[0,\infty)}(s)\sqrt{\mathbb E(\left\lVert u_s \right\rVert^2)},$ we can interpret the above estimate as a convolution estimate
\[\Vert \Delta(C \mathcal I^{\text{lin}}) \Vert_{L^2_t L^1_{\omega}(\Omega_{\infty})} \le \Vert f*g \Vert_{L^2}.\]
If we then apply Young's convolution inequality we find
\[\Vert f*g \Vert_{L^2} \le \Vert f \Vert_{L^1} \Vert g \Vert_{L^2}.\]
Using that $\Vert f \Vert_{L^1} = \left\lVert \Delta \left(C\Phi^{\operatorname{lin}} B\right) \right\rVert_{L^1_tL^2_{\omega}(\Omega_{\infty},\operatorname{HS}(\mathbb R^m, \mathbb R^n))}$ and $\Vert g \Vert_{L^2}=\Vert u \Vert_{L^2(\Omega_{\infty})}$ and combining this with the second inequality in \eqref{eq:estmbil} yields
\begin{equation}
\begin{split}
\label{eq:d3eq}
\Vert \Delta(C\mathcal I^{\text{lin}}) \Vert_{L^2_t L^1_{\omega}(\Omega_{\infty})} 
&\le  \left\lVert \Delta \left(C\Phi^{\operatorname{lin}} B\right) \right\rVert_{L^1_tL^2_{\omega}(\Omega_{\infty},\operatorname{HS}(\mathbb R^m, \mathbb R^n))} \Vert u \Vert_{L^2(\Omega_{(0,\infty)})}\\
&\le 2\left\lVert \Delta \left(H^{\operatorname{lin}} \right) \right\rVert_{\operatorname{TC}}\Vert u \Vert_{L^2(\Omega_{\infty})}.
\end{split}
\end{equation}
Analogously, we find using Minkowski's integral inequality in (1) and analogous arguments as presented in estimates \eqref{eq:d2eq} and \eqref{eq:d3eq} to obtain (2) and (3) respectively, and using the second estimate in \eqref{eq:estmbil} to get (4) that  
\begin{equation}
\begin{split}
\label{eq:four}
&\Vert \Delta(\mathcal C\mathcal I^{\text{lin}}) \Vert_{L^2(\Omega_{\infty})}^2=\int_0^{\infty}\mathbb E \left( \int_0^t \left\Vert \Delta(C\Phi_{t,s}B)  \right\rVert \left\lVert u_s \right\rVert \ \mathrm ds \right)^2 \ \mathrm dt \\
&\overset{(1)}{\le} \int_0^{\infty} \left( \int_0^t \sqrt{\mathbb E(\left\Vert \Delta(C\Phi_{t,s}B)  \right\rVert^2)} \left\lVert u_s \right\rVert_{L^{\infty}(\Omega)} \ \mathrm ds \right)^2 \ \mathrm dt \\
&\overset{(2)}{\le} \int_0^{\infty} \left( \int_{\mathbb R} \indic_{(0,\infty)}(t-s) \sqrt{\mathbb E(\left\Vert \Delta(C\Phi_{t-s}B)  \right\rVert^2_{\operatorname{HS}})}\indic_{(0,\infty)}(s) \left\lVert u_s \right\rVert_{L^{\infty}(\Omega)} \ \mathrm ds \right)^2 \ \mathrm dt\\
&\overset{(3)}{\le} \left\lVert \Delta \left(C\Phi^{\operatorname{lin}} B\right) \right\rVert_{L^1_tL^2_{\omega}(\Omega_{\infty},\operatorname{HS}(\mathbb R^m, \mathbb R^n))}^2 \Vert u \Vert_{L^2_tL^{\infty}_{\omega}(\Omega_{\infty})}^2 \\
&  \overset{(4)}{\le} 4 \left\lVert \Delta \left(H^{\operatorname{lin}}\right) \right\rVert_{\operatorname{HS}}^2 \Vert u \Vert_{L^2_tL^{\infty}_{\omega}(\Omega_{\infty})}^2.
\end{split}
\end{equation}
Inserting bounds \eqref{eq:prelimestm}, \eqref{eq:d3eq}, \eqref{eq:four} into \eqref{eq:estone} then yields the claim.
\end{proof}
{We observe that the bounds of Theorem \ref{theo:stochbil} depend on $\left\lVert \Delta(H^{\operatorname{lin}}) \right\rVert_{\operatorname{TC}}$, which indicates once more that a reduced order model by BT will lead to a small error also in the case of multiplicative noise.}
We can (formally) improve our previous convergence result using interpolation to $q \in (1,2).$ The convex case $q=2$ will be analyzed separately in Section \ref{sec:LQR} for Wiener noise.

\begin{corr}
\label{corr:important}
Consider two linear systems with multiplicative noise profile that we assume to be either i.i.d. or the same for both systems.
For initial conditions $\xi = \sum_{i=1}^k \langle v, \widehat{e_i} \rangle_{\mathbb{R}^k} \xi_i$ with $L^2(\Omega,\mathcal F_0,X)$ orthonormal system $(\xi_i),$ and $\widetilde{\xi}:= \sum_{i=1}^k \langle  v, \widehat{e_i} \rangle_{\mathbb{R}^k} \widetilde{\xi_i}.$
 Let $q \in (1,2)$ then the following estimate holds
 \[ \Vert \Delta(C Z^{\operatorname{lin}}) \Vert_{L^2_t L^q_{\omega}(\Omega_T)} \le \Vert \Delta(C Z^{\operatorname{lin}}) \Vert^{2q^{-1}-1}_{L^2_t L^1_{\omega}(\Omega_T)} \Vert \Delta(C Z^{\operatorname{lin}}) \Vert^{2(1-q^{-1})}_{L^2(\Omega_T)}\]
 Moreover, we have that for any $T \in [0,\infty]$ that for $u \in L^2_{\operatorname{ad}}(\Omega_T)$ and $\gamma, c$ as in \eqref{eq:flow}
\begin{equation}
\label{eq:bd}
\Vert CZ^{\operatorname{lin}} \Vert_{L^2(\Omega_{\infty})} \le \gamma \Vert C \Vert  \left( \frac{\Vert \xi \Vert_{L^2(\Omega)}}{\sqrt{2c}} + \frac{\Vert B \Vert }{c} \Vert u \Vert_{L^2(\Omega_{\infty})}\right). 
 \end{equation}
It follows that for two L\'evy processes $(M_t)_{t \ge 0}$, that we assume either to be independent or the same, that drive the dynamics of a linear system with multiplicative noise, we have for control functions $u \in L^2_{\operatorname{ad}}(\Omega_{T})$ that  
\begin{equation}
\begin{split}
 \Vert \Delta(C Z^{\operatorname{lin}}) \Vert_{L^2_t L^q_{\omega}(\Omega_T)} \le &\left(\left\lVert \Delta(H^{\operatorname{lin}}) \right\rVert_{\operatorname{TC}} \left(\left\lVert \xi \right\rVert_{L^{2}(\Omega)} + 2 \left\lVert u \right\rVert_{L^2(\Omega_{T})}\right) \right)^{2q^{-1}-1} \times \\
  &\times \left(\gamma \Vert C \Vert  \left( \frac{\Vert \xi \Vert_{L^2(\Omega)}}{\sqrt{2c}} + \frac{\Vert B \Vert }{c} \Vert u \Vert_{L^2(\Omega_{\infty})}\right)\right)^{2(1-q^{-1})}.
  \end{split}
\end{equation}
\end{corr}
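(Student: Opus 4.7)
The plan is to prove the three statements of Corollary \ref{corr:important} in sequence, using only interpolation/H\"older-type reasoning, mean-square stability from Assumption \ref{ass:lin}, and Theorem \ref{theo:stochbil}. For the interpolation inequality I would use log-convexity of $L^p$ norms. With $\theta := 2q^{-1}-1 \in (0,1)$ satisfying $\frac{1}{q} = \frac{\theta}{1} + \frac{1-\theta}{2}$, H\"older's inequality yields the pointwise (in $t$) bound
\[
\| \Delta(CZ^{\operatorname{lin}})(t,\cdot) \|_{L^q_\omega} \le \| \Delta(CZ^{\operatorname{lin}})(t,\cdot) \|_{L^1_\omega}^{\theta} \, \| \Delta(CZ^{\operatorname{lin}})(t,\cdot) \|_{L^2_\omega}^{1-\theta}.
\]
Squaring, integrating in $t$, and applying H\"older in $t$ with conjugate exponents $1/\theta$ and $1/(1-\theta)$ produces the stated inequality, after identifying $L^2_t L^2_\omega = L^2(\Omega_T)$ by Fubini.

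For the a priori bound \eqref{eq:bd}, I would start from the variation of constants representation \eqref{eq:process}, apply the triangle inequality in $L^2(\Omega_\infty)$, and treat the homogeneous and inhomogeneous parts separately. For the homogeneous contribution, Assumption \ref{ass:lin} gives $\mathbb{E}\|C\Phi^{\operatorname{lin}}_t\xi\|^2 \le \|C\|^2 \gamma\, e^{-ct} \|\xi\|^2_{L^2(\Omega)}$, whose integration over $t \in (0,\infty)$ contributes the $\|\xi\|_{L^2(\Omega)}/\sqrt{2c}$ term. For the stochastic convolution, Minkowski's integral inequality moves the $L^2(\Omega)$ norm inside the $ds$ integral, and the Markov/flow property (making $\Phi^{\operatorname{lin}}_{t,s}$ independent of $\mathcal{F}_s$) combined with \eqref{eq:flow} applied with initial time $s$ yields
\[
\| \Phi^{\operatorname{lin}}_{t,s} Bu_s \|_{L^2(\Omega)} \le \sqrt{\gamma}\, e^{-c(t-s)/2} \|B\| \|u_s\|_{L^2(\Omega)}.
\]
The resulting expression is a convolution of $\indic_{[0,\infty)} e^{-c\cdot/2}$ against $\indic_{[0,\infty)} \|u_\cdot\|_{L^2(\Omega)}$, so Young's inequality $\|f*g\|_{L^2} \le \|f\|_{L^1} \|g\|_{L^2}$ with $\|f\|_{L^1} = 2/c$ closes the bound and produces the $\|B\|/c$ factor.

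For the final combined estimate, I apply the interpolation inequality from the first step to $\Delta(CZ^{\operatorname{lin}})$. Theorem \ref{theo:stochbil} directly controls the $L^2_tL^1_\omega$ factor in terms of $\|\Delta(H^{\operatorname{lin}})\|_{\operatorname{TC}}$. For the $L^2(\Omega_T)$ factor I apply the a priori bound \eqref{eq:bd} to the coupled error system \eqref{eq:compsys}; because its state and noise operators are block-diagonal, it inherits mean-square exponential stability from the two subsystems, and the constants $\|B\|, \|C\|, \gamma, c$ are taken as the natural joint constants. Substituting these two bounds into the interpolation inequality with exponents $2q^{-1}-1$ and $2(1-q^{-1})$ gives the claim.

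I expect the main obstacle to be the a priori bound \eqref{eq:bd}: the key technical step is using the Markov property to decouple $\Phi^{\operatorname{lin}}_{t,s}$ from $u_s$ when bounding $\mathbb{E}\|\Phi^{\operatorname{lin}}_{t,s} Bu_s\|^2$, and then cleanly recognizing the resulting $ds$-integral as a convolution so that Young's inequality applies with a dimension-free constant. The interpolation is entirely routine, and once the first two steps are available, the third is an algebraic assembly.
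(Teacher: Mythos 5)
Your proposal is correct and follows essentially the same route as the paper: log-convexity/H\"older interpolation between $L^1_\omega$ and $L^2_\omega$ followed by H\"older in $t$, the a priori $L^2$ bound via mean-square exponential stability together with Minkowski's integral inequality and Young's convolution inequality, and assembly with Theorem \ref{theo:stochbil}, with the a priori bound applied to the coupled error system \eqref{eq:compsys} for the second interpolation factor exactly as the paper implicitly does. The only discrepancy is at the level of constants in \eqref{eq:bd} --- reading Assumption \ref{ass:lin} literally gives $\sqrt{\gamma}\,e^{-c(t-s)/2}$ and hence a factor $2\sqrt{\gamma}/c$ rather than $\gamma/c$ --- but the paper's own computation is equally loose on this point, so this is not a gap in your argument.
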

\begin{proof}
The result follows from applying H\"older's inequality twice: After applying H\"older's inequality in the expectation with parameters $p=(2-q)^{-1}$ and $\widetilde p=(q-1)^{-1}$ for $q$ as in the statement, we obtain
\begin{equation}
\begin{split}
\mathbb E\left(\Vert \Delta(CZ_t^{\operatorname{lin}}) \Vert^q \right) 
&\le \mathbb E\left(\Vert \Delta(CZ_t^{\operatorname{lin}}) \Vert^{2-q} \Vert \Delta(CZ_t^{\operatorname{lin}}) \Vert^{2(q-1)}\right)\ \\
&\le \left(  \mathbb E\left(\Vert \Delta(CZ_t^{\operatorname{lin}}) \Vert\right)  \right)^{2-q} \left(\mathbb{E}\left(\Vert \Delta(CZ_t^{\operatorname{lin}}) \Vert^2 \right)  \right)^{q-1}.
\end{split}
\end{equation}
We thus conclude that after applying H\"older's inequality with $p=(2q^{-1}-1)^{-1}$ and $\widetilde{p} = (2-2q^{-1})^{-1}$ in time that
\begin{equation}
\begin{split}
\label{eq:hoelders}
&\Vert \Delta(CZ^{\operatorname{lin}}) \Vert^2_{L^2_tL^q_{\omega}(\Omega_T)}=\int_0^T \mathbb E\left(\Vert \Delta(CZ_t^{\operatorname{lin}}) \Vert^q \right)^{2/q} \ \mathrm dt\\
&\le \int_0^T \left(  \mathbb E\left(\Vert \Delta(CZ_t^{\operatorname{lin}}) \Vert\right)  \right)^{4q^{-1}-2} \left(\mathbb{E}\left(\Vert \Delta(CZ_t^{\operatorname{lin}}) \Vert^2 \right)  \right)^{2(1-q^{-1})} \ \mathrm dt\\
&\le \left(\int_0^T \left(  \mathbb E\left(\Vert \Delta(CZ_t^{\operatorname{lin}}) \Vert\right)  \right)^{2} \ \mathrm dt\right)^{2q^{-1}-1}  \left(\int_0^T\left(\mathbb{E}\left(\Vert \Delta(CZ_t^{\operatorname{lin}}) \Vert^2 \right)  \right) \ \mathrm dt\right)^{2(1-q^{-1})} \\
&= \Vert \Delta(CZ^{\operatorname{lin}}) \Vert_{L^2_tL^1_{\omega}(\Omega_T)}^{2(2q^{-1}-1)}\Vert \Delta(CZ^{\operatorname{lin}}) \Vert_{L^2(\Omega_T)}^{4(1-q^{-1})}.
\end{split}
\end{equation}
It therefore suffices to verify the $L^2(\Omega_{T})$-boundedness of the process $CZ_t$, which is the second term in the last line of \eqref{eq:hoelders}, since the first term has been estimated in Theorem \ref{theo:stochbil}.

We then have from \eqref{eq:process} 
\begin{equation}
\begin{split}
\label{eq:estone1}
&\left\lVert \Delta(CZ^{\operatorname{lin}}) \right\rVert_{L^2(\Omega_{T})} \le \left\lVert \Delta(C\mathcal H^{\operatorname{lin}}) \right\rVert_{L^2(\Omega_{T})} +\left\lVert \Delta(C\mathcal I^{\operatorname{lin}}) \right\rVert_{L^2(\Omega_{T})}.
\end{split}
\end{equation}
The first term on the right-hand side, we can easily estimate as in \eqref{eq:prelimestm}
\begin{equation}
\begin{split}
\left\lVert \Delta(C\mathcal H^{\operatorname{lin}})\right\rVert_{L^2(\Omega_{T})}\le \left\lVert \Delta\left(H^{\operatorname{lin}}\right) \right\rVert_{\operatorname{HS}} \left\lVert \xi \right\rVert_{L^{2}(\Omega)}.
\end{split}
\end{equation}
Thus, it suffices to bound for $u \in L^2(\Omega_{\infty})$ the second term on the right-hand side of \eqref{eq:estone1}. This can be done by looking at 
\[ \mathcal I^{\operatorname{lin}}_t   = \int_0^t T_{t-s}N \mathcal I^{\operatorname{lin}}_s \ \mathrm dM_s + \int_0^t T_{t-s}Bu_s \ ds. \]

Recall that the solution is given 
\[CZ^{\operatorname{lin}}_t:=C\mathcal H^{\operatorname{lin}}_{t} +C\mathcal I^{\operatorname{lin}}_{t}  =C\Phi^{\operatorname{lin}}_t \xi + \int_0^t C \Phi^{\operatorname{lin}}_{t,s}B u_s \ \mathrm ds.\]
Using that the flow is exponentially stable, we find uniformly for all $T>0$
\[\Vert C\Phi^{\operatorname{lin}} \xi \Vert_{L^2(\Omega_T)} \le \gamma \Vert C \Vert \Vert \xi \Vert_{L^2(\Omega)} \sqrt{\int_0^T e^{-2ct} \ \mathrm dt} \le \gamma \frac{\Vert C \Vert \Vert \xi \Vert_{L^2(\Omega)}}{\sqrt{2c}}\]
and similarly for $X_t:= \int_0^t C \Phi^{\operatorname{lin}}_{t,s}B u_s \ \mathrm ds$ using exponential stability of the flow and Minkowski's integral inequality in (1) and Young's convolution inequality in (2)
\begin{equation}
\begin{split}
\Vert X \Vert_{L^2(\Omega_T)} 
&\overset{(1)}{\le}\sqrt{ \int_0^T \left(\int_0^t \gamma \Vert C \Vert \Vert B \Vert e^{-c(t-s)} \sqrt{\Vert \mathbb E \Vert u_s \Vert^2} \ \mathrm ds \right)^2 \ \mathrm dt } \\
& \overset{(2)}{\le} \frac{\gamma \Vert C \Vert \Vert B \Vert }{c} \Vert u \Vert_{L^2(\Omega_T)}.
\end{split}
\end{equation}
Thus, we have altogether that 
\[ \Vert CZ^{\operatorname{lin}} \Vert_{L^2(\Omega_T)} \le \gamma \Vert C \Vert  \left( \frac{\Vert \xi \Vert_{L^2(\Omega)}}{\sqrt{2c}} + \frac{\Vert B \Vert }{c} \Vert u \Vert_{L^2(\Omega_T)}\right). \]

\medskip

The final inequality in the statement of the Corollary then follows from the above estimates together with Theorem \ref{theo:stochbil}.

\end{proof}

\section{Optimal control theory}
\label{sec:OCT}
{
As discussed in the introduction of this paper, optimal control of large-scale SDEs \eqref{eq:standardform} given cost functionals \eqref{eq:energy2} is generally very expensive or even infeasible. Therefore, MOR is used to approximate these high-dimensional equations in order to subsequently solve the optimal control problem in the surrogate model. We denote the output of the full and the reduced system by $Y(u)= CZ(u)$ and  $\widetilde Y(u)= \widetilde C\widetilde Z(u)$, respectively, and write the dependence on the control $u$ explicitly. Obtaining an optimal control $\widetilde u_*$ in the reduced system, it is known by Theorems \ref{theo:stoch} and \ref{theo:stochbil} that $Y(\widetilde u_*)\approx \widetilde Y(\widetilde u_*)$ if we apply BT in order to ensure $\lVert \Delta(H^{\operatorname{ou}\vert \operatorname{lin}}) \rVert_{\operatorname{TC}}$ to be small. However, we are more interested in the performance of the reduced optimal control in the original system. This means that we measure the distances between $Y(u_*)$ and $ \widetilde Y(\widetilde u_*)$ as well as $u^*$ and $\widetilde u_*$ in terms of the cost functionals. Here, $u_*$ represents the optimal control in the original model. Therefore, we establish the following proposition. We start by showing that the abstract optimal control problems for the two stochastic equations \eqref{eq:standardform} with control functionals \eqref{eq:energy2} are well-posed. Moreover, we state explicit bounds on the cost functionals for the optimal control error under MOR.}
\begin{prop}
\label{prop:OU}
The optimal control problem \emph{(OCP)} for stochastic systems \eqref{eq:standardform} with associated energy functionals $J,$\footnote{We just write $J$ to denote any of the functionals in \eqref{eq:energy2}} as in \eqref{eq:energy2}, is well-posed and there exists a minimizer $u \in L^2(\Omega_T)$ to the \emph{OCP}. 
Let us now consider two systems, with outputs $CZ$ and $\widetilde C \widetilde Z$ satisfying the conditions of Theorems \ref{theo:stoch} and \ref{theo:stochbil} respectively, and consider two minimizers, of the two energy functionals systems given by
\begin{equation}
\begin{split}
u_* &= \operatorname{arg \ min}_{u} J (CZ(u),u,T) \text{ and }\widetilde{u}_* = \operatorname{arg \ min}_{u} J(\widetilde C \widetilde Z(u),u,T).
\end{split}
\end{equation}
In the case of Ornstein-Uhlenbeck processes we have
\begin{equation}
    \begin{split}
&\left\vert \sqrt{J_{\operatorname{LQR}}^{\operatorname{ou}}(CZ^{\operatorname{ou}}(u_{*}),u_{*},T)}-  \sqrt{J_{\operatorname{LQR}}^{\operatorname{ou}}(\widetilde{C}\widetilde{Z}^{\operatorname{ou}}(\widetilde u_{*}),\widetilde u_{*}),T)} \right\vert  \le \left\lVert \Delta(H^{\operatorname{ou}}) \right\rVert_{\operatorname{TC}}\times \\ &\quad \left(1+T^{-1/2}\left(\Vert \xi \Vert_{L^2(\Omega)} + 2\operatorname{max} \left\{\Vert u_{*}\Vert_{L^2(\Omega_T)},\Vert \widetilde{u}_{*}\Vert_{L^2(\Omega_T)}\right\}\right)\right)
\end{split}
\end{equation}
and for linear systems with multiplicative noise and $r \in [1,2)$ there is $C_T>0$ such that
\begin{equation}
    \begin{split}
&\left\vert \sqrt{J_r^{\operatorname{lin}}(CZ^{\operatorname{lin}}(u_{*}),u_{*},T)}-  \sqrt{J_r^{\operatorname{lin}}(\widetilde{C}\widetilde{Z}^{\operatorname{lin}}(\widetilde u_{*}),\widetilde u_{*},T)} \right\vert \\
& \qquad \le \left(\left\lVert \Delta(H^{\operatorname{lin}}) \right\rVert_{\operatorname{TC}} \left(\left\lVert \xi \right\rVert_{L^{2}(\Omega)} + 2 \operatorname{max}\{\left\lVert u_{*} \right\rVert_{L^2(\Omega_{\infty})},\left\lVert \widetilde{u}_{*} \right\rVert_{L^2(\Omega_{\infty})} \}\right) \right)^{2r^{-1}-1} \\
&\qquad\qquad \left(\gamma \Vert C \Vert  \left( \frac{\Vert \xi \Vert_{L^2(\Omega)}}{\sqrt{2c}} + \frac{\Vert B \Vert }{c}\operatorname{max}\{\Vert u_* \Vert_{L^2_{\operatorname{ad}}(\Omega_{T})},\Vert \widetilde{u}_* \Vert_{L^2_{\operatorname{ad}}(\Omega_{T})} \} \right)\right)^{2(1-r^{-1})}.
\end{split}
\end{equation}
\end{prop}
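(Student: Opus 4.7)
The plan is to first establish well-posedness via the direct method of the calculus of variations, and then derive the error bound via a standard sandwich argument combined with a reverse triangle inequality in an appropriately chosen product norm, invoking Theorem \ref{theo:stoch} and Corollary \ref{corr:important} at the end. For well-posedness, I would note that in both cases the control-to-state map $u \mapsto CZ(u)$ is affine and continuous from $L^2_{\operatorname{ad}}(\Omega_T,\mathbb R^m)$ into $L^2(\Omega_T,\mathcal H)$ in the OU case, respectively into $L^2_tL^r_\omega(\Omega_T,\mathcal H)$ in the linear multiplicative case (using the Lipschitz estimate from Corollary \ref{corr:important}, equation \eqref{eq:bd}). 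Since $R$ is strictly positive definite, the quadratic penalty $\langle u,Ru\rangle_{L^2(\Omega_T)}$ makes $J$ coercive and strictly convex. The state-dependent term is a composition of a continuous affine map with a convex norm and is therefore weakly lower semicontinuous on $L^2_{\operatorname{ad}}(\Omega_T,\mathbb R^m)$. Any minimizing sequence is bounded in $L^2$, hence admits a weak limit, and the direct method yields a unique minimizer.

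For the error bound, the key observation is that $\sqrt{J(CZ(u),u,T)}$ equals the norm of the pair $(CZ(u),R^{1/2}u)$ in a product Banach space, namely $L^2(\Omega_T,\mathcal H)\oplus L^2(\Omega_T,\mathbb R^m)$ (with the prefactor $T^{-1/2}$) for the OU cost, and $L^2_tL^r_\omega(\Omega_T,\mathcal H)\oplus L^2(\Omega_T,\mathbb R^m)$ for the linear multiplicative cost. The map $(a,b)\mapsto\sqrt{\|a\|^2+\|b\|^2}$ is a norm on any such direct sum (by Minkowski's inequality in $\mathbb R^2$), so the reverse triangle inequality together with the fact that the control component is identical when evaluated at the same $u$ gives
\[
\bigl|\sqrt{J(CZ(u),u,T)}-\sqrt{J(\widetilde C\widetilde Z(u),u,T)}\bigr|\;\le\;\|\Delta(CZ)(u)\|_{\ast},
\]
where $\|\cdot\|_\ast$ is the appropriate state-space norm ($T^{-1/2}\|\cdot\|_{L^2(\Omega_T)}$ for OU, $\|\cdot\|_{L^2_tL^r_\omega(\Omega_T)}$ for multiplicative noise).

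Next I would exploit optimality. Since $u_*$ minimizes $J$ for the first system and $\widetilde u_*$ for the second, we have $J(CZ(u_*),u_*,T)\le J(CZ(\widetilde u_*),\widetilde u_*,T)$ and $J(\widetilde C\widetilde Z(\widetilde u_*),\widetilde u_*,T)\le J(\widetilde C\widetilde Z(u_*),u_*,T)$. Combining each of these two chains with the reverse triangle inequality of the previous paragraph (evaluated at $u=\widetilde u_*$ and $u=u_*$ respectively) gives the sandwich bound
\[
\bigl|\sqrt{J(CZ(u_*),u_*,T)}-\sqrt{J(\widetilde C\widetilde Z(\widetilde u_*),\widetilde u_*,T)}\bigr|
\;\le\;\max\bigl\{\|\Delta(CZ)(u_*)\|_\ast,\|\Delta(CZ)(\widetilde u_*)\|_\ast\bigr\}.
\]
It then remains to insert Theorem \ref{theo:stoch} for the OU case and Corollary \ref{corr:important} (with exponents $2r^{-1}-1$ and $2(1-r^{-1})$) for the multiplicative case, replacing $\|u\|_{L^2(\Omega_T)}$ by $\max\{\|u_*\|_{L^2(\Omega_T)},\|\widetilde u_*\|_{L^2(\Omega_T)}\}$. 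This reproduces the two stated inequalities verbatim.

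The principal obstacle is the well-posedness step in the infinite-dimensional stochastic setting: one has to ensure that a weakly convergent minimizing sequence $u_n\rightharpoonup u_*$ in $L^2_{\operatorname{ad}}(\Omega_T,\mathbb R^m)$ yields $CZ(u_n)\rightharpoonup CZ(u_*)$ in the relevant state space so that lower semicontinuity of the norm can be applied. For the OU case this follows from the explicit variation-of-constants formula \eqref{eq:typea}; for the multiplicative case one needs the bound \eqref{eq:bd} to control the mild solution uniformly. Once well-posedness is in hand, the remainder of the proof is a purely algebraic combination of the sandwich trick with the norm-difference estimates already established, and no further analytic input is required.
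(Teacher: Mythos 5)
Your proposal is correct and follows essentially the same route as the paper: the direct method (boundedness of the minimizing sequence, weak compactness, weak continuity of the affine control-to-state map via the continuity of the stochastic and deterministic convolution operators, and weak lower semicontinuity of the norm) for existence, followed by the reverse-triangle-inequality sandwich argument exploiting optimality of $u_*$ and $\widetilde u_*$, and finally insertion of Theorem \ref{theo:stoch} and Corollary \ref{corr:important}. Your product-norm phrasing of the reverse triangle inequality and your explicit appeal to Corollary \ref{corr:important} for the exponents $2r^{-1}-1$ and $2(1-r^{-1})$ are, if anything, slightly cleaner than the paper's write-up, but the substance is identical.
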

{\begin{rem}
The result of Proposition \ref{prop:OU} is an indicator that the optimal control $\widetilde u_*$ obtained from a reduced system  is of good quality if we choose the surrogate model such that $\lVert \Delta(H^{\operatorname{ou}\vert \operatorname{lin}}) \rVert_{\operatorname{TC}}$ is small. This can be ensured if a reduced system by BT with appropriate reduced order dimension is chosen.
\end{rem}}
\begin{proof}[Proof of Proposition \ref{prop:OU}]
We restrict ourselves, for the proof of the existence of minimizers, to systems \eqref{eq:bil}, as controlled OU processes \eqref{eq:OU} can be studied in a similar way.

Since the control functional is bounded from below, we can find a minimizing sequence of $u_n \in L_{\operatorname{ad}}^2(\Omega_T)$ defining processes $Z_n^{\operatorname{lin}}$ such that 
\[ \lim_{n \rightarrow \infty} J_r^{\operatorname{lin}}(CZ_n^{\operatorname{lin}}( u_{n}),u_{n},T) = \inf_{u_{*} \in L_{\operatorname{ad}}^2(\Omega_T)} J_r^{\operatorname{lin}}(CZ^{\operatorname{lin}}( u_{*}), u_{*},T)\] so that the $u_n$ satisfy
\begin{equation}
Z_{n}^{\operatorname{lin}}(t) = T_t\xi+ \int_0^t T_{t-s} N Z_{n}^{\operatorname{lin}}(s) \ \mathrm dM_s + \int_0^t T_{t-s}Bu_{n}(s)  \ \mathrm ds. 
\end{equation}
Since the $L_{\operatorname{ad}}^2(\Omega_T)$ norm of the elements $(u_n)$ is bounded, it follows  from \eqref{eq:bd} that $(Z_n^{\operatorname{lin}})$ is uniformly bounded in $L^2_{\operatorname{ad}}(\Omega_T)$.

\medskip

Weak compactness implies the existence of weak limits in $L^2_{\operatorname{ad}}(\Omega_T)$ for subsequences, that we denote just as the original sequences, $Z_n^{\operatorname{lin}} \rightharpoonup Z^{\operatorname{lin}} \in L^2_{\operatorname{ad}}(\Omega_T)$ and $u_n \rightharpoonup u \in L^2_{\operatorname{ad}}(\Omega_T).$

\medskip

Recall that by Ito's isometry and $\Vert T_{t-s} \Vert \le \nu e^{-\omega(t-s)}$ in (1), and Young's inequality (2) with $f(s):=\indic_{[0,\infty)}(s) \nu^2 e^{-2\omega s}$ and $g(s):=\indic_{[0,T)}(s)\mathbb E \Vert \mathscr S_s \Vert^2$ we have
\[ \Vert f*g \Vert_{L^1(\mathbb{R})} \le \Vert f \Vert_{L^1(\mathbb{R})}\Vert g \Vert_{L^1(\mathbb{R})} = \nu^2/(2\omega) \Vert \mathscr S \Vert^2_{L^2(\Omega_{T})}, \]
there exists a linear continuous operator
\begin{equation}
\begin{split}
&I: L^2_{\operatorname{ad}}(\Omega_{T}) \rightarrow L^2_{\operatorname{ad}}(\Omega_T),\quad I(\mathscr S)_t= \int_0^t T_{t-s} N \mathscr S_s \ \mathrm dM(s) \\
&\left\Vert  I(\mathscr S) \right\rVert_{L^2(\Omega_T)} \overset{(1)}{\le}  \sqrt{\mathbb{E}(M_1^2)} \Vert N \Vert \sqrt{\Vert f*g \Vert_{L^2(\mathbb{R})}} \overset{(2)}{\le}  \frac{\nu \sqrt{\mathbb{E}(M_1^2)} \Vert N \Vert}{\sqrt{2\omega}} \Vert \mathscr S \Vert_{L^2(\Omega_{T})}.
 \end{split}
 \end{equation}
Similarly, there is a continuous linear operator
 \begin{equation}
\begin{split}
&D: L^2_{\operatorname{ad}}(\Omega_T) \rightarrow L^2_{\operatorname{ad}}(\Omega_T), \quad D(u)_t= \int_0^t T_{t-s}B u_s \ \mathrm ds \\
& \left\lVert D(u)  \right\Vert_{L^2(\Omega_T)} \le \frac{\Vert B \Vert \nu}{\omega} \Vert u \Vert_{L^2(\Omega_T)}.
 \end{split}
 \end{equation}
 Thus, by weak convergence $Z_n^{\operatorname{lin}} \rightharpoonup Z^{\operatorname{lin}}$ in $L^2_{\operatorname{ad}}(\Omega_T)$, we can take any functional $f \in  L^2_{\operatorname{ad}}(\Omega_T)^*.$ Then $f\circ I \in L^2_{\operatorname{ad}}(\Omega_T)^*$ and thus the following weak limit exists
 \[ I(Z_n^{\operatorname{lin}}) \rightharpoonup I(Z^{\operatorname{lin}}) \text{ in }L^2_{\operatorname{ad}}(\Omega_T).\]
 
 Furthermore, we have the following weak limits in $L^2_{\operatorname{ad}}(\Omega_T)$
\begin{equation}
\begin{split}
\int_0^t T_{t-s} N Z_n^{\operatorname{lin}}(s) \ \mathrm dM_s  &\rightharpoonup \int_0^t T_{t-s} N Z^{\operatorname{lin}}_s \ \mathrm dM_s\text{ and } \\ 
\int_0^t T_{t-s}B u_{n}(s) \ \mathrm ds  &\rightharpoonup \int_0^t T_{t-s}Bu_s\ \mathrm ds
 \end{split}
 \end{equation}
 such that the process $Z^{\operatorname{lin}}$ satisfies with optimal control $u$
 \[Z^{\operatorname{lin}}_t  = T_t \xi +  \int_0^t T_{t-s} N Z^{\operatorname{lin}}_s \ \mathrm dM_s + \int_0^t T_{t-s}Bu_s \ \mathrm ds. \]
 Finally, to see that this solution actually minimizes the optimal control functional, we use that by weak convergence and lower semicontinuity of the norm
\begin{equation}
\begin{split}
\left\lVert C Z(u) \right\rVert^2_{L^2_tL^r_{\omega}(\Omega_T)}+ \langle u, R u \rangle_{L^2(\Omega_T)} &\le \inf_{u_{*} \in L_{\operatorname{ad}}^2(\Omega_T)} J_r^{\operatorname{lin}}(CZ^{\operatorname{lin}}( u_{*}), u_{*},T)\\
&\le \lim_{n \rightarrow \infty} J_{r}^{\text{lin}} (CZ(u_n),u_n,T)
  \end{split}
 \end{equation}
 which means that by the assumption on the sequence $u_n$, the control function $u$ is a minimizer.

We now write $Z(u)$ or $\widetilde{Z}(u)$ where $u$ is a control in order to emphasize which control is used. We then observe that from the inverse triangle inequality, we have for Ornstein-Uhlenbeck processes using \eqref{eq:energy2}
\begin{equation}
    \begin{split}
   \sqrt{J_{\operatorname{LQR}}^{\text{ou}}(\widetilde C \widetilde Z^{\text{ou}}(u_{*}),u_{*},T)}-T^{-1/2}\Vert \Delta\left(CZ^{\text{ou}}(u_*)\right) \Vert_{L^2(\Omega_T)}&\le \sqrt{J_{\operatorname{LQR}}^{\text{ou}}(CZ^{\text{ou}}(u_{*}),u_{*},T)}  \\
   \sqrt{J_{\operatorname{LQR}}^{\text{ou}}( C Z^{\text{ou}}(\widetilde u_{*}),\widetilde u_{*},T)}-T^{-1/2}\Vert \Delta\left(CZ^{\text{ou}}(\widetilde u_*)\right) \Vert_{L^2(\Omega_T)}&\le \sqrt{J_{\operatorname{LQR}}^{\text{ou}}(\widetilde C\widetilde Z^{\text{ou}}(\widetilde u_{*}),\widetilde u_{*},T)}
    \end{split}
\end{equation}
and for systems with multiplicative noise
\begin{equation}
    \begin{split}
   \sqrt{J_{r}^{\text{lin}}(\widetilde C \widetilde Z^{\text{lin}}(u_{*}),u_{*},T)}-\Vert \Delta\left(CZ^{\text{lin}}(u_*)\right) \Vert_{L^2(\Omega_T)}&\le \sqrt{J_{r}^{\text{lin}}(CZ^{\text{lin}}(u_{*}),u_{*},T)}  \\
   \sqrt{J_{r}^{\text{lin}}( C Z^{\text{lin}}(\widetilde u_{*}),\widetilde u_{*},T)}-\Vert \Delta\left(CZ^{\text{lin}}(\widetilde u_*)\right) \Vert_{L^2(\Omega_T)}&\le \sqrt{J_{r}^{\text{lin}}(\widetilde C\widetilde Z^{\text{lin}}(\widetilde u_{*}),\widetilde u_{*},T)}. 
    \end{split}
\end{equation}
Since $u_*$ and $\widetilde u_*$ are minimizers of the respective functional, we have 
\begin{equation}
    \begin{split}
  &J_{\operatorname{LQR}\vert r}^{\text{ou}\vert \text{lin} }(\widetilde C \widetilde Z^{\text{ou}\vert \text{lin} }(\widetilde u_{*}),\widetilde u_{*},T) \le   J_{\operatorname{LQR}\vert r}^{\text{ou}\vert \text{lin} }( \widetilde  C \widetilde  Z^{\text{ou}\vert \text{lin} }( u_{*}),  u_{*},T)\\
  &J_{\operatorname{LQR}\vert r}^{\text{ou}\vert \text{lin} }( C Z^{\text{ou}\vert \text{lin} }(u_{*}),u_{*},T) \le   J_{\operatorname{LQR}\vert r}^{\text{ou}\vert \text{lin} }( C Z^{\text{ou}\vert \text{lin} }(\widetilde  u_{*}), \widetilde u_{*},T).
  \end{split}
\end{equation}

Both estimates imply immediately that
\begin{equation}
    \begin{split}
&\left\vert \sqrt{J_{\operatorname{LQR}}^{\text{ou} }(\widetilde C \widetilde Z^{\text{ou}}(\widetilde u_{*}),\widetilde u_{*},T)}- \sqrt{J_{\operatorname{LQR}}^{\text{ou}}( C Z^{\text{ou}}(u_{*}),u_{*},T)} \right\vert \\
&\qquad \le T^{-1/2} \operatorname{max} \left\{\Vert \Delta\left(CZ^{\text{ou}}(\widetilde{u}_{*})\right) \Vert_{L^2(\Omega_T)},\Vert \Delta\left(CZ^{\text{ou}}(u_{*})\right) \Vert_{L^2(\Omega_T)}\right\} \text{ and } \\
&\left\vert \sqrt{J_{r}^{ \text{lin} }(\widetilde C \widetilde Z^{ \text{lin} }(\widetilde u_{*}),\widetilde u_{*},T)}- \sqrt{J_{r}^{ \text{lin} }( C Z^{ \text{lin} }(u_{*}),u_{*}T)}\right\vert \\
&\qquad  \le \operatorname{max} \left\{\Vert \Delta\left(CZ^{\text{lin}}(\widetilde{u}_{*})\right) \Vert_{L^2(\Omega_T)},\Vert \Delta\left(CZ^{\text{lin}}(u_{*})\right) \Vert_{L^2(\Omega_T)}\right\}.
\end{split}
\end{equation}
The bounds then follow from the conditions stated in Theorems \ref{theo:stoch} and \ref{theo:stochbil} respectively.

\end{proof}

\subsection{Infinite time Linear Quadratic Regulator}
\label{sec:LQR}
In the previous subsection we showed that the energy functionals \eqref{eq:energy2} with optimal control are well-approximated by the reduced order models, cf. Proposition  \ref{prop:OU}.

\medskip

In this subsection we go one step further and focus on the control itself and discuss techniques to approximate the optimal control using a reduced order model with a focus on infinite time horizons.

\subsubsection{Ornstein-Uhlenbeck processes}
\label{secsec:OUP}
Before discussing further the links between MOR and optimal control theory, we state in the next Proposition an approximation result on the optimal control $u$ to a high-dimensional Ornstein-Uhlenbeck process with Gaussian noise \eqref{eq:OU} and error control.
\begin{prop}
\label{prop:Ric}
Let $X$ be finite-dimensional and let $(Z^{\operatorname{ou}}_t)_{t \ge 0}$ be a controlled Ornstein-Uhlenbeck process \eqref{eq:OU} satisfying Assumption \ref{ass:OU} with standard Wiener noise $(W_t)_{t \ge 0}$ such that the pair $(A,C)$ is observable. 
The solution to the OCP with $T=\infty$ and $R>0$ in \eqref{eq:energy2}
is given by the fixed-point equation\footnote{as $Z^{\operatorname{ou}}_t$ itself depends on $u$}
\begin{equation}
\label{eq:outputu}
 u_{P}(t)=-R^{-1}B^{*}PZ^{\operatorname{ou}}_t,
 \end{equation}
where $P$ is the unique positive-definite solution to the Riccati equation
\begin{equation}
\label{eq:Riccati}
A^*P  + PA + C^{*}C-PBR^{-1}B^{*}P  = 0. 
\end{equation}
For a sequence $P_k\ge 0$ of unique solutions to standard Lyapunov equations
 \begin{equation}
\label{eq:sLyap}
A_k^{*}P_k+ P_k A_k +C^{*}C+ L_k^{*}RL_k=0 
\end{equation} 
where $L_k := R^{-1}B^{*}P_{k-1}$ for $k \ge 1$ and $A_{k} := A- BL_k$ for $k \ge 0$ with $L_0:=0$, matrices $P_k$ then converge quadratically and monotonically, in the sense of operators, to $P$. 
The control functions
\[ u_{P_k}(t)=-R^{-1}B^{*}P_kZ^{\operatorname{ou}}_t,\]
satisfy for $\Vert P- P_k \Vert$ sufficiently small, uniformly in the final time parameter $T$,
\[T^{-1/2} \Vert u_P-u_{P_k} \Vert_{L^2(\Omega_T)}= \mathcal O(\Vert P_k-P_{k-1} \Vert^2). \]
\end{prop}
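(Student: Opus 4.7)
The proof combines three classical pieces: the Riccati characterization of the infinite-horizon LQR optimum, Kleinman's Newton iteration for that equation, and a uniform-in-$T$ stability estimate for the closed-loop OU process.

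For the first piece I would appeal to standard ergodic LQR theory (as in Bensoussan or Wonham). Under Assumption~\ref{ass:OU} the pair $(A,B)$ is stabilizable because $A$ itself is already exponentially stable, and observability of $(A,C)$ yields a unique positive-definite solution $P$ to the algebraic Riccati equation \eqref{eq:Riccati} with $A-BR^{-1}B^{*}P$ exponentially stable. The value function for the corresponding deterministic LQR problem is $V(x)=x^{*}Px$, and since the additive Wiener noise contributes only a constant $\operatorname{tr}(K^{*}PK)$ to the ergodic cost, the associated HJB equation is solved by $V$ and the stationary optimal feedback is $u_P(t)=-R^{-1}B^{*}PZ^{\operatorname{ou}}_t$.

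The second piece is Kleinman's theorem applied to \eqref{eq:sLyap}. The choice $L_0=0$ is stabilizing because $A_0=A$ is stable under Assumption~\ref{ass:OU}, so inductively each $A_k$ is stable, each Lyapunov equation has a unique $P_k\ge 0$, and the sequence decreases in the operator ordering to $P$ with the quadratic estimate $\|P-P_k\|\le c\|P-P_{k-1}\|^{2}$. The triangle inequality gives $\|P_k-P_{k-1}\|\ge(1-c\|P-P_{k-1}\|)\|P-P_{k-1}\|$, so once $\|P-P_{k-1}\|$ is small enough one has $\|P-P_k\|=\mathcal O(\|P_k-P_{k-1}\|^{2})$; this is the form of quadratic convergence that will feed the final bound.

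For the third piece, let $Z^{P}$ and $Z^{P_k}$ denote the two closed-loop OU processes driven by the same Wiener path from the common initial state, with $dZ^{P}_t=(A-BR^{-1}B^{*}P)Z^{P}_t\,dt+K\,dW_t$ and analogously for $P_k$. A direct algebraic manipulation yields
\[
u_P(t)-u_{P_k}(t)=-R^{-1}B^{*}\bigl[P\,(Z^{P}_t-Z^{P_k}_t)+(P-P_k)\,Z^{P_k}_t\bigr],
\]
and the pathwise difference $\Delta_t:=Z^{P}_t-Z^{P_k}_t$ satisfies the \emph{noise-free} linear ODE
\[
\dot\Delta_t=(A-BR^{-1}B^{*}P)\Delta_t+BR^{-1}B^{*}(P_k-P)Z^{P_k}_t,\qquad \Delta_0=0.
\]
Exponential stability of $A-BR^{-1}B^{*}P$ together with Young's convolution inequality, applied exactly as in the proof of Theorem~\ref{theo:stoch}, gives $\|\Delta\|_{L^{2}(\Omega_T)}\le c\|P-P_k\|\cdot\|Z^{P_k}\|_{L^{2}(\Omega_T)}$ with $c$ independent of $T$ and $k$. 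A standard Lyapunov perturbation argument shows that for $\|P-P_k\|$ small enough the matrices $A-BR^{-1}B^{*}P_k$ share a common exponential-stability margin with $A-BR^{-1}B^{*}P$; hence $\mathbb E\|Z^{P_k}_t\|^{2}$ is bounded uniformly in $t$ and $k$, giving $T^{-1/2}\|Z^{P_k}\|_{L^{2}(\Omega_T)}\le C$ uniformly in $T$. Assembling the pieces,
\[
T^{-1/2}\|u_P-u_{P_k}\|_{L^{2}(\Omega_T)}\le C'\|P-P_k\|=\mathcal O(\|P_k-P_{k-1}\|^{2}).
\]
The main obstacle is exactly this last uniform-in-$T$ step: since $\|Z^{P_k}\|_{L^{2}(\Omega_T)}$ grows like $\sqrt{T}$ for a stable OU process, it is the $T^{-1/2}$ prefactor that rescues the bound, and one has to propagate the stability margin of $A-BR^{-1}B^{*}P$ to the nearby closed-loop matrices in a way that is uniform in the iteration index $k$.
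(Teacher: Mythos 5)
Your proposal is correct and follows the same overall skeleton as the paper's proof: Kleinman's Newton iteration for the quadratic estimate $\Vert P-P_k\Vert\le c\Vert P_k-P_{k-1}\Vert^2$, a perturbation bound propagating the stability margin of $A-BR^{-1}B^{*}P$ to the closed-loop matrices $A-BR^{-1}B^{*}P_k$, and the $T^{-1/2}$ normalization absorbing the $\sqrt{T}$-growth of the stationary closed-loop process. The one place where you genuinely diverge is the treatment of $Z^{P}_t-Z^{P_k}_t$. The paper writes this difference as $(T_P-T_k)(t)\xi+\int_0^t(T_P-T_k)(t-s)K\,dW_s$, applies the Duhamel identity to the semigroup difference, and then must estimate the stochastic convolution term by It\^o's isometry together with the explicit integral identity \eqref{eq:auxint}; this produces the somewhat unwieldy $e^{-\omega t}\bigl(e^{\alpha\Vert P-P_k\Vert t}-1\bigr)$ factors. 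You instead use the complementary decomposition $u_P-u_{P_k}=-R^{-1}B^{*}\bigl[P\,\Delta_t+(P-P_k)Z^{P_k}_t\bigr]$ and observe that, for the two closed-loop processes driven by the same Wiener path, the noise cancels and $\Delta_t$ solves a noise-free linear ODE forced by $BR^{-1}B^{*}(P_k-P)Z^{P_k}_t$; a single application of Duhamel and Young's convolution inequality then gives $\Vert\Delta\Vert_{L^2(\Omega_T)}\le c\Vert P-P_k\Vert\,\Vert Z^{P_k}\Vert_{L^2(\Omega_T)}$ with $c$ independent of $T$ and $k$. This buys a cleaner argument (no It\^o isometry for the difference term, no explicit integral identity) at the cost of needing the uniform-in-$k$ moment bound on $Z^{P_k}$, which you correctly supply via the shared stability margin for $\Vert P-P_k\Vert$ small. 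Both routes yield the stated $\mathcal O(\Vert P_k-P_{k-1}\Vert^2)$ bound.
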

\begin{proof}
Substituting \eqref{eq:outputu} into \eqref{eq:OU} yields an Ornstein-Uhlenbeck process
\begin{equation}
\begin{split}
\label{eq:condensed}
dZ^{\operatorname{ou}}_t = (A-BR^{-1}B^{*}P) Z^{\operatorname{ou}}_t \ \mathrm dt+ K \ \mathrm dW_t.
\end{split}
\end{equation}
The operator $A_P:=A-BR^{-1}B^{*}P$ is the generator of an exponentially stable semigroup $\Vert T_P(t) \Vert \le \nu e^{-\omega t}$ \cite[Theorem $1$]{Z} for some $\nu,\omega>0$.

Here, $P$ is the unique positive solution to the Riccati equation such that for all $x,y \in D(A)$
\begin{equation*} 
 \langle Ax,Py \rangle_X +\langle Px,Ay \rangle_X+ \langle (C^{*}C-PBR^{-1}B^{*}P)x,y \rangle_X = 0. 
 \end{equation*}
By Newton's method, one can approximate $P$ by a sequence $P_k \ge 0$, where $P_k$ solve Lyapunov equations \eqref{eq:sLyap}, for Hurwitz matrices $A_k$ \cite[Proof 1)]{Kleinman} with quadratic convergence rate \cite[(13)]{Kleinman} to the solution of the Riccati equation, namely 
\begin{equation}
\label{eq:newton}
\Vert P-P_k \Vert \le c \Vert P_k-P_{k-1} \Vert^2. 
\end{equation}
Standard results from semigroup theory imply that $A_k$ is also a generator with semigroup satisfying \cite[1.3, Chap.\@ 3]{EN}
\begin{equation}
\label{eq:Tk}
\Vert T_k(t) \Vert \le \nu e^{(-\omega+\nu\Vert B R^{-1} B^{*}\Vert \Vert P-P_k \Vert)t}.
\end{equation}
For an approximation $P_k$ of $P$ we find using \eqref{eq:typea}, \eqref{eq:outputu}, and \eqref{eq:condensed}
\begin{equation*}
\begin{split}
 &\Vert (u_P-u_{P_k})(t) \Vert_{L^2(\Omega)} \\
 &\le \Vert R^{-1}B^{*}\Vert \Vert P-P_k \Vert \Vert Z^{\operatorname{ou}}_t \Vert_{L^2(\Omega)}+ \Vert R^{-1}B^{*}P_k\Vert \Vert Z^{\operatorname{ou}}_t- Z^{\operatorname{ou}}_{k,t}\Vert_{L^2(\Omega)} \\
 &\le \Vert R^{-1} B^{*} \Vert \Bigg( \Vert P-P_k \Vert \Bigg(  \left\Vert \int_0^t T_{P}(t-s) K \ \mathrm dW_s \right\Vert_{L^2(\Omega)}+ \nu e^{-\omega t} \Vert \xi  \Vert_{L^2(\Omega)}  \Bigg) \\ 
&\quad +\Vert P_k \Vert \left( \left\Vert \int_0^t (T_P-T_{k})(t-s) K \ \mathrm dW_s \right\Vert_{L^2(\Omega)}+ \Vert  (T_P-T_{k})(t) \xi   \Vert_{L^2(\Omega)} \right)\Bigg).
\end{split}
\end{equation*}

We then use that by the product rule of differentiation
\begin{equation}
\begin{split}
\label{eq:product}(T_P-T_{k})(t) \xi &=-\int_0^t \frac{\mathrm d}{\mathrm ds}\left(T_P(t-s)T_k(s)\xi\right) \ \mathrm ds \\
&= -\int_0^t T_{P} (t-s) BR^{-1}B^{*}(P_k-P)T_{k}(s) \xi \ \mathrm ds
\end{split}
\end{equation}
such that due to \eqref{eq:Tk} and \eqref{eq:product}
\begin{equation*}
\begin{split}
\left\lVert (T_P-T_{k})(t) \xi \right\rVert_{L^2(\Omega)} 
&\le \nu^2 \Vert BR^{-1}B^{*} \Vert \Vert P-P_k \Vert  \times \\
& \qquad \int_0^t e^{-\omega(t-s)} e^{-(\omega -\nu  \Vert BR^{-1}B^{*} \Vert\Vert P-P_k \Vert)s} \ \mathrm ds \ \Vert \xi \Vert_{L^2(\Omega)} \\
&= \nu^2e^{-\omega t} \Vert BR^{-1}B^{*} \Vert \Vert P-P_k \Vert  \int_0^t  e^{\nu  \Vert BR^{-1}B^{*} \Vert\Vert P-P_k \Vert s} \ \mathrm ds \ \Vert \xi \Vert_{L^2(\Omega)} \\
&= \nu  e^{-\omega t}\left(e^{\nu \Vert BR^{-1}B^{*} \Vert\Vert P-P_k \Vert t}-1\right) \Vert \xi \Vert_{L^2(\Omega)}.
\end{split}
\end{equation*}
Rearranging and estimating further using Ito's isometry and the integral identity
\begin{equation}
\label{eq:auxint}
\int_0^{\infty} \left( e^{-a t} \left( e^{ct}-1 \right) \right)^2 \ \mathrm dt = \frac{c^2}{4a^3-6a^2c+2ac^2}, \text{ for } \Re(a)>\Re(c), \Re(a)>0,
\end{equation}
we obtain by setting $\alpha:=\nu\Vert BR^{-1}B^{*} \Vert$ 
\begin{equation}
\begin{split}
\Vert &(u_P-u_{P_k})(t) \Vert_{L^2(\Omega)} \le \nu \Vert R^{-1} \Vert\Vert B \Vert    \Bigg(\Vert P-P_k \Vert  \left( \Vert \xi  \Vert_{L^2(\Omega)} + \frac{\Vert K\Vert_{\operatorname{HS}}}{\sqrt{2\omega}}\right)\\
&+\frac{\alpha\Vert K\Vert_{\operatorname{HS}} \Vert P-P_k \Vert}{\sqrt{4\omega^3-6\omega^2\alpha\Vert P-P_k \Vert+2\omega \alpha^2 \Vert P-P_k \Vert^2}}+ e^{-\omega t}\left(e^{\alpha\Vert P-P_k \Vert t}-1\right) \Vert \xi \Vert\Bigg).  
\end{split}
\end{equation}
By taking the $L^2$ norm and regularizing the expression by dividing it by $\sqrt{T}$, we then finally obtain, using $T^{-1/2}\Vert 1 \Vert_{L^2(\Omega_T)}=1$
and \eqref{eq:auxint} in the last term, the following estimate
\begin{equation}
\begin{split}
T^{-1/2} \Vert u_P-u_{P_k} \Vert_{L^2(\Omega_T)} &\le  \nu \Vert R^{-1} \Vert\Vert B \Vert \Vert P-P_k \Vert \times \Bigg( \left( \Vert \xi  \Vert_{L^2(\Omega)}  \frac{\Vert K\Vert_{\operatorname{HS}}}{\sqrt{2\omega}}\right)  
 \\
 &\qquad +\frac{\alpha \left(\Vert K\Vert_{\operatorname{HS}}+T^{-1/2} \Vert \xi \Vert \right)}{\sqrt{4\omega^3-6\omega^2\alpha\Vert P-P_k \Vert+2\omega \alpha^2 \Vert P-P_k \Vert^2}}\Bigg).
\end{split}
\end{equation}
 \end{proof}
Thus, by approximating the solution to the Riccati equation using the scheme outlined in Proposition \ref{prop:Ric}, the optimal feedback law \eqref{eq:outputu} is approximated by the output of a new (uncontrolled) linear system 
\begin{equation}
\begin{split}
\label{eq:auxiliaryoutput}
dZ^{\operatorname{ou}}_t &= \bar{A}Z^{\operatorname{ou}}_t \ \mathrm dt+K \ \mathrm dW_t, \\
Z^{\operatorname{ou}}_0 &=\xi, \text{ and }\\
u_{P_k}(t) &= \bar{C} Z^{\operatorname{ou}}_t
\end{split}
\end{equation}
with operators 
\[\bar{C} = -R^{-1}B^{*}P_k, \ \bar{A}= A-BR^{-1}B^{*}P_k, \text{ and } \operatorname{ran}(B_{\operatorname{in}}) \ni \xi.\]
If we now define a reduced model to \eqref{eq:auxiliaryoutput}, e.g., by balancing the system (which is \eqref{eq:OU} with $(A, B)$ replaced by $(\bar A, 0)$ and output operator $\bar C$), we can use Theorem \ref{theo:stoch} to control the error between the outputs. This allows us to approximate the optimal control of the full high-dimensional system by the output of a reduced system of \eqref{eq:auxiliaryoutput}.

\medskip

The method outlined in this section allows us to approximate the (unique) optimal control of the full system using an auxiliary reduced order model. This is a stronger result than the approximation of energy functionals in Proposition \ref{prop:OU}. In general, the approximation of the optimal control may not be possible, since the optimal control may not be unique and may not be given as the output of a linear system, again.

\subsubsection{Linear systems with multiplicative noise}

We now turn to the infinite time OCP for finite-dimensional linear systems with multiplicative standard Wiener noise $(W_t)$ \eqref{eq:bil} and optimal control functionals \eqref{eq:energy2} with optimal control
\begin{equation}
\label{eq:OCPbil}
u_* = \operatorname{arg min}_{ u \in L^2(\Omega_T)}J_{\operatorname{LQR}}^{\text{lin}} (CZ^{\operatorname{lin}},u,\infty).
\end{equation}
Let $P$ then be the solution to the augmented Riccati equation \cite[(5)]{RZ}
\[ A^{*}P+PA+N^{*}PN-PBR^{-1}B^TP+C^{*}C=0. \]
The optimal control to \eqref{eq:OCPbil} is then given by the fixed-point equation ($Z^{\operatorname{lin}}$ also depends on $u_*$)
\begin{equation}
\label{eq:optcontrol}
 u_{*}(t) = -R^{-1} B^{*}P Z^{\operatorname{lin}}_t.
 \end{equation}
Thus, by replacing $u_*$ in the above expression by \eqref{eq:optcontrol}, we find that $u_*$ is the output of 
\begin{equation}
\begin{split}
\label{eq:522}
dZ^{\operatorname{lin}}_t &= \bar{A}Z^{\operatorname{lin}}_t \ \mathrm dt+NZ^{\operatorname{lin}}_t \ \mathrm dW_t \\
Z^{\operatorname{lin}}_0 &= \xi, \text{ and }\\
u_{*}(t) &= \bar{C}Z^{\operatorname{lin}}_t 
\end{split}
\end{equation}
with operators
\[ \bar{C} = -R^{-1}B^{*}P, \ \bar{A}= A-BR^{-1}B^{*}P, \text{ and } \operatorname{ran}({B}_{\operatorname{in}}) \ni \xi.\]

Reducing \eqref{eq:522} leads to an approximation for time optimal control that is based on solving a low-dimensional system.

\section{Numerical Examples}\label{sec:num}

\subsection{Controlled Ornstein-Uhlenbeck}

For an illustration of the above bounds we consider an Ornstein-Uhlenbeck process with control $u_t = \sin(t) \textbf{1} \in \mathbb R^d$ governed  by
\begin{equation}
\begin{split}
d Z_t &= AZ_t \ \mathrm dt + B_1 u_t \ \mathrm dt + B_2 \ \mathrm dW_t,\\
Y_t &= CZ_t, \ Z_0 = z_0,
\end{split}
\end{equation}
with $Z_t, W_t \in \mathbb{R}^d, A, B_1, B_2 \in \mathbb{R}^{d\times d}, C \in \mathbb{R}^{m \times d}, d, m = 50$, where we choose the corresponding matrices such that the dynamics is most pronounced in the first $r=5$ dimensions, namely
\begin{equation}
    A, B_1, B_2, C = \operatorname{diag}(\underbrace{-1, \dots, -1}_{r\text{ times}}, \underbrace{-0.01, \dots, -0.01}_{d-r\text{ times}}) + (\alpha_{ij}), 
\end{equation}
with random noise $\alpha_{ij} \sim \mathcal{N}(0, 10^{-6})$ i.i.d. being different for each variable. We either choose $z_0^* = (0, \dots, 0)$ or $z_0^* = (\underbrace{1, \dots, 1}_{r\text{ times}}, \underbrace{0, \dots, 0}_{d-r\text{ times}})$ as an initial value, take $B_{\operatorname{in}}=z_0$ and compare the bounds obtained in Theorems \ref{general_bound_same_u} and \ref{theo:stoch} and Corollary \ref{L2-error-nonzero-initial-split} with a simulation of the full and the reduced dynamics using BT.

\begin{figure}[H]

\centering
  \includegraphics[width=1.2\linewidth]{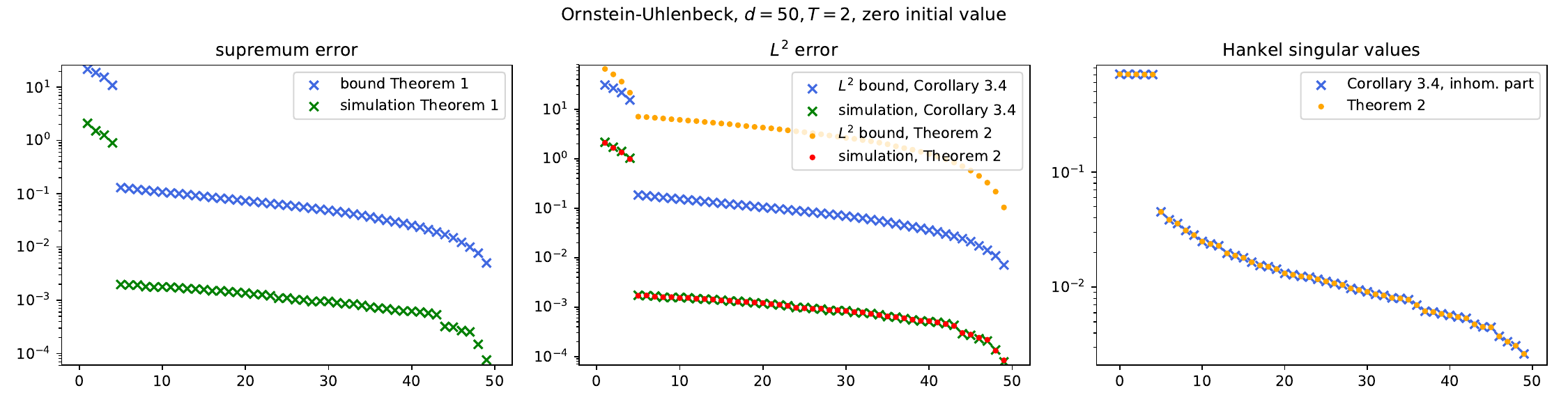}
  \includegraphics[width=0.8\linewidth]{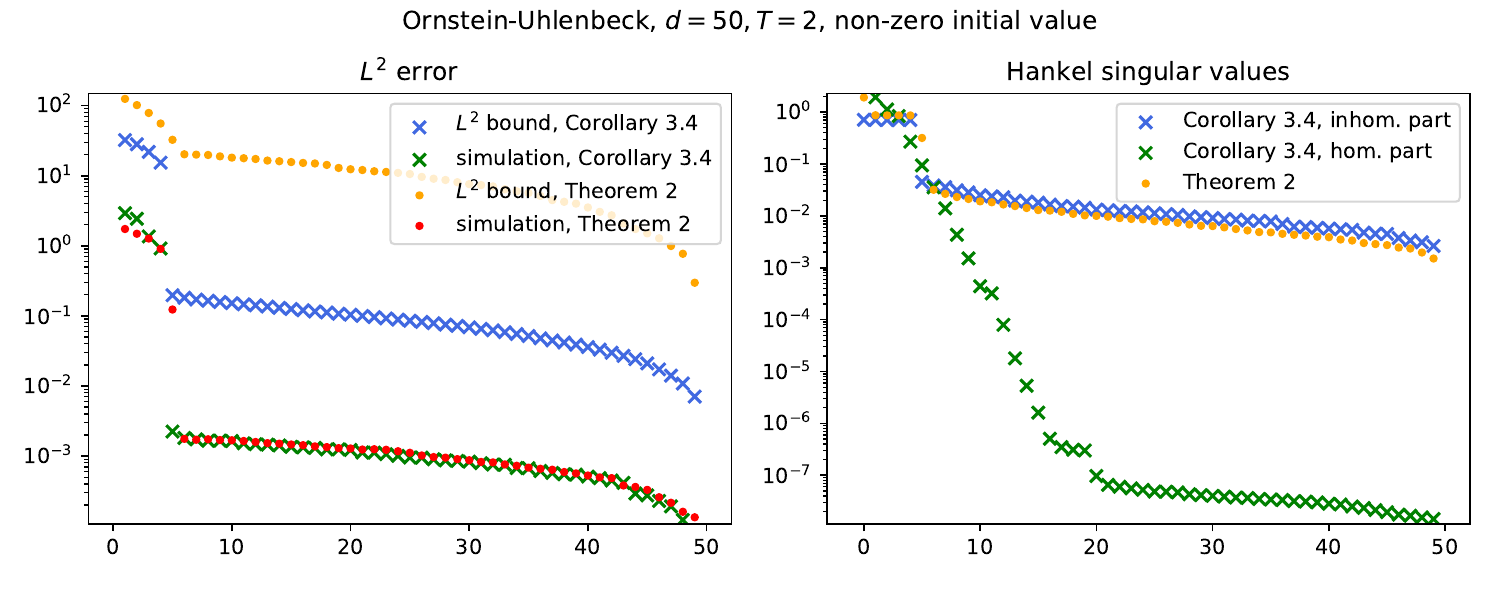}
\caption{Error bounds and simulations of BT of Ornstein-Uhlenbeck systems. The simulation is the numerically simulated error of the norm specified in the respective Theorem/Corollary.}
\label{OU_toy_example_BT} 
\end{figure}

In the top panel of Figure \ref{OU_toy_example_BT} we show the error bounds as well as the Hankel singular values and simulation results with varying dimension $r$ of the reduced model when starting in $z^*_0 = (0, \dots, 0)$. The simulation results are obtained with a simple Euler-Maruyama discretization with step-size $0.01$. We see that both bounds are rather conservative, the supremum bound on the left hand side seems to be a bit tighter than the $L^{2}$ bounds (also naturally due to the $\sqrt{T}$ scaling of the latter) and we in particular realize that the bound from Corollary \ref{L2-error-nonzero-initial-split} seems to be tighter than the one from Theorem \ref{theo:stoch}. The bottom panel shows the same approach, however, now choosing $z^*_0 = (1, \dots, 1, 0, \dots, 0)$. Here, we do not have a supremum bound anymore, but realize that the two $L^{2}$ bounds hold and that model reduction works well. For computing all the Gramians we use the formulas \eqref{eq:Lyapunoveq}. The code can be found at \url{github.com/lorenzrichter/balanced-truncation}.

 \subsection{Chain of oscillators}
 
 The one-dimensional chain of oscillators is a non-equilibrium statistical mechanics model that describes heat transport through a chain of $N$ particles coupled at each end to heat reservoirs at different temperatures with friction parameter $\gamma$ at the first and last particle. It was first introduced for the rigorous derivation of Fourier's law, or a rigorous proof of its breakdown: this is well described in \cite{BLR00}. We consider $N$ particles and denote by $q_i$ the location of each particle with respect to their equilibrium position and by $p_i$ its momentum. \\
The Hamilton function $H:\mathbb R^{2N} \rightarrow \RR$ of the system is given by
\begin{equation}
\begin{split}
H(\textbf{q},\textbf{p}) &= \frac{\langle \textbf{p},M^{-1}\textbf{p}\rangle}{2}+ V_{\eta,\zeta}(\textbf{q}), \text{ where } \\
V_{{\bf \eta,\zeta}}(\textbf{q}) &= \sum_{i=1}^N \eta_i q_i^2 + \sum_{i=1}^{N-1} \xi_i (q_i-q_{i+1})^2
\end{split}
\end{equation}
with mass matrix $M:=m \operatorname{id}_{\mathbb C^{N \times N}}$ and coupling strengths $\eta_i, \xi_i > 0.$
The above form of the potential describes particles that are fixed by a quadratic \textit{pinning} potential $U_{\operatorname{pin},i}(q) = \eta_i q^2 $ and interact with their nearest neighbors through a quadratic \textit{interaction} potential $U_{\operatorname{int},i}(q_i-q_j) =  \xi_i (q_i-q_j)^2$ for $j=i+1$ and $ i \in \{1,...,N\}$. \\
The $1^{\text{st}}$ and $N^{\text{th}}$ particle are each coupled to a heat bath at inverse temperatures $\beta_1$ and $\beta_N,$ respectively. We also assume these two particles $I=\left\{1,N \right\}$ to be subject to friction. The dynamics of the system is described by the Langevin dynamics
\begin{align*}
\mathrm d \mathbf q_t &= M^{-1}\mathbf  p_t  \ \mathrm d t \\
\mathrm d \mathbf  p_t &= (-S \mathbf  q_t - \Gamma \mathbf p_t + \sigma u_t) \ \mathrm d t + \sigma \ \mathrm d W_t 
\end{align*} 
where $u_t \in \mathbb R^N$ is an external control and $(W_t)$ an $\RR^N$-valued standard Wiener process.
Expressing the system using phase-space coordinates $\mathbf Z_t:=(\mathbf q_t^*, \mathbf p_t^*)^*$ we see that the entire system is described by the Ornstein-Uhlenbeck process 
\begin{equation}
\label{eq:system of SDEs}
\mathrm d  \mathbf Z_t = (A \mathbf Z_t +  B u_t) \ \mathrm dt +  K\ \mathrm d W_t
\end{equation}
with
\begin{equation}
\begin{split}
\label{eq:ourchoice}
&A = \begin{pmatrix} 0 & M^{-1} \\ -S & -\Gamma \end{pmatrix}, 
\quad K = B= \begin{pmatrix} 0 & 0 \\ 0&  \sigma \end{pmatrix}, \text{ with fluctuation-dissipation relation}\\ &\sigma = \operatorname{diag}\left(\frac{\sqrt{2m\gamma}}{\sqrt{\beta_1}},0,\dots,0,\frac{\sqrt{2m\gamma}}{\sqrt{\beta_N}}\right), \quad \Gamma = \operatorname{diag}(\gamma, 0,\dots, 0,\gamma).
\end{split}
\end{equation}
Here, we changed the notation so that $u_t \in \mathbb R^{2N}$ is an external control and $(W_t)$ an $\RR^{2N}$-valued standard Wiener process.

The operator $S$ is the Jacobi (tridiagonal) matrix for $f=(f_1,...,f_N) \in \RR^N$, defined as
\[(S f)_n = -\xi_n f_{n+1}- \xi_{n-1}f_{n-1} +(\eta_n+(2-\delta_{n \in I})\xi_n)f_n\]
where $f_0=f_{N+1}:=0.$
The matrix $A$ is Hurwitz if all parameters of the model are strictly positive.

The invariant distribution to the uncontrolled process \eqref{eq:system of SDEs} is given by \cite{LLR}
\begin{equation}
\label{eq:mu}
\mu_{\Sigma_{\beta}}(\textbf{q},\textbf{p}):= (2\pi)^{-N/2} \operatorname{det}(\Sigma_{\beta}^{-1/2}) \operatorname{exp} \left( -\tfrac{1}{2} \langle (\textbf{q},\textbf{p}),\Sigma_{\beta}^{-1} (\textbf{q},\textbf{p}) \rangle \right),
\end{equation}
where the covariance matrix $\Sigma_{\beta}$ is the solution to the Lyapunov equation \cite[(2.8)]{LLR}
\begin{equation}
\label{eq:Lyapu}
A\Sigma_{\beta}+\Sigma_{\beta}A^{*} +KK^* =0. 
\end{equation}

\subsection{Friction and spectral gap}

If in the chain of oscillators one chooses the friction according to \eqref{eq:ourchoice}, then the spectral gap of $A$ closes necessarily as $N \rightarrow \infty.$ This is apparent by studying $$\sum_{\lambda \in \sigma(A)} \lambda= \operatorname{tr}(A)  = \operatorname{tr}(-\Gamma) = -2\gamma.$$
Since we have $2N$ (counting multiplicity) eigenvalues with negative real parts, we conclude that the one with largest real part decays to zero at least with rate $\vert \Re(\lambda_S) \vert = \mathcal O(N^{-1})$.

\medskip

The situation changes once we apply a constant non-zero friction $\gamma:=\gamma_1=\gamma_2>0$ such that $\Gamma:=\operatorname{diag}\left(\gamma,\dots, \gamma \right)$ to all the particles. In this case, we find for the determinant using the block-determinant formula
\[\operatorname{det}\begin{pmatrix}  Q_{11}&Q_{12} \\ Q_{21}&Q_{22} \end{pmatrix}=\operatorname{det}(Q_{22}Q_{11}-Q_{21}Q_{12}) \text{ if } Q_{11}Q_{12}=Q_{12}Q_{11}\]
the decomposition
\[\operatorname{det}(A-\lambda I)=\operatorname{det}(\lambda^2 I +\lambda \Gamma+SM^{-1})=0.\]

This equation is equivalent to solving $\lambda^2+ \gamma \lambda +\mu=0$ where $\mu \in \sigma(SM^{-1}).$ By explicitly solving the quadratic equation, one can see that this equation has only solutions with strictly negative real part if $SM^{-1}$ has a uniform -- in the number of particles -- spectral gap. A comprehensive discussion of the spectral gap for this model can be found in \cite{M19,BM18}.

\medskip

For our numerical simulations we do not want the closing of the spectral gap to inflict the simulations. We therefore consider a mild constant friction parameter $\gamma_2$ and a larger friction parameter $\gamma_1$ at the terminal ends of the chain. To be precise, we choose a simulation time $T = 10$, $N = 75$ oscillators and $\gamma_1 = 10, \gamma_2 = 0.25, m =\xi_n = \eta_n = \beta_1 = \beta_N = 1$. Figure \ref{chain_of_oscillators_BT_plot} shows the BT bound from Theorem \ref{general_bound_same_u} on the left hand side and the $L^2$ bound from \ref{theo:stoch} in the middle subplot along with the simulated errors, again with varying reduced dimension $r$ on the $x$-axis. The plot on the right hand side shows the Hankel singular values. We can see that indeed one can reduce the dimension of the system significantly with only getting a small error and we note that the $L^2$ error bound seems to saturate for large $r$, which might be due to numerical issues.

\begin{figure}[H]

\centering
  \includegraphics[width=1.2\linewidth]{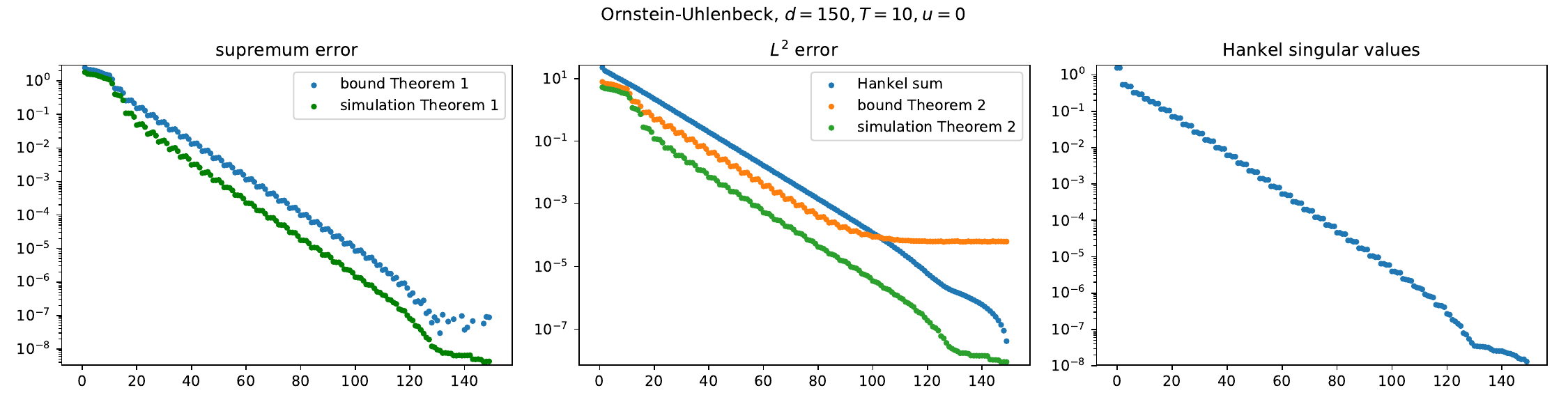}
  \caption{Error analysis of the chain of oscillators when applying BT.}
  \label{chain_of_oscillators_BT_plot}
\end{figure}

\subsection{Stochastic optimal control}
\label{sec:steer}

We now study the set of reachable distributions $\mathcal{N}(0, \Sigma)$ for a controlled OU process \eqref{eq:OU}. To be precise, we are looking for a feedback law of minimal energy 
\begin{equation}
\label{eq:LQRchain}
J_{\operatorname{LQR}}^{\text{ou}} (0,u,\infty):=   \lim_{T \rightarrow \infty} \frac{\left\lVert u  \right\rVert^{2}_{L^2(\Omega_T)}}{T} 
\end{equation}
to maintain an invariant state $\mu_{\Sigma}$ for some given $\Sigma>0$, namely
\begin{equation}
\label{eq:states}
\mu_{\Sigma}(\textbf{q},\textbf{p}):= (2\pi)^{-N} \operatorname{det}(\Sigma^{-1/2}) \operatorname{exp} \left( -\tfrac{1}{2} \langle (\textbf{q},\textbf{p}),\Sigma^{-1} (\textbf{q},\textbf{p}) \rangle \right).
\end{equation}
According to \cite[Theorem 4]{CGP2} this invariant state can be attained with a control $u^{*}_t = -K^{*} \Pi\mathbf Z_t$, where $\Pi$ is (any) symmetric matrix that satisfies
\begin{equation}
\label{Pavon_Pi_equation}
(A - B B^{*} \Pi) \Sigma + \Sigma (A - B B^{*} \Pi)^{*} + K K^{*} = 0.
\end{equation}

In our next Proposition we show that, from the invariant distribution for the chain of oscillators associated with some boundary temperatures $\beta=(\beta_1,\beta_N)$, we can reach the invariant state associated with any other boundary temperature $\beta'=(\beta_1',\beta'_N).$
\begin{prop}
There exists a control that steers the chain of oscillators \eqref{eq:system of SDEs}, with physical temperature $\beta=(\beta_1,\beta_N)$, to the invariant distribution $\mathcal N(0,\Sigma_{\beta'})$ with temperatures $\beta'=(\beta_1',\beta'_N).$ If $\beta_1=\beta_n$ and $\beta_1'=\beta'_N$ then the invariant state has covariance matrix
\begin{equation}
\label{eq:Sigmabeta}
 \Sigma_{\beta'} = \beta_1'^{-1} \begin{pmatrix} S^{-1} & 0 \\ 0 & M \end{pmatrix}
 \end{equation}
 and a solution $\Pi$ to \eqref{Pavon_Pi_equation} reads
\begin{equation}
\label{eq:Pi}
 \Pi= \operatorname{diag}\left(0,\frac{(\beta_1'-\beta_1)}{2}M^{-1}\right). 
 \end{equation}
\end{prop}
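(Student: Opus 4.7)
The plan has three independent pieces. First, for the existence claim I would appeal directly to \cite[Theorem 4]{CGP2}, which (given controllability of the pair $(A,B)$ for the chain, a property that is standard for the chain of oscillators since $B$ couples to the boundary momenta and $A$ propagates this to the interior through $S$ and $M^{-1}$) reduces the existence of a stabilising feedback $u^{*}_{t}=-B^{*}\Pi Z_{t}$ with target invariant Gaussian $\mathcal N(0,\Sigma_{\beta'})$ to the solvability of the matrix equation \eqref{Pavon_Pi_equation}. In the symmetric temperature case I will exhibit an explicit $\Pi$, which settles existence simultaneously with the closed-form claim.

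Second, I verify that $\Sigma_{\beta'}=(\beta'_{1})^{-1}\operatorname{diag}(S^{-1},M)$ is indeed the invariant covariance of the uncontrolled chain at the homogeneous temperature $\beta'_{1}=\beta'_{N}$, i.e.\ that $A\Sigma_{\beta'}+\Sigma_{\beta'}A^{*}+K'K'^{*}=0$ with $K'$ the noise coefficient at temperature $\beta'$. Using $A=\begin{pmatrix}0&M^{-1}\\-S&-\Gamma\end{pmatrix}$ and block multiplication, one computes
\begin{equation*}
A\Sigma_{\beta'}=(\beta'_{1})^{-1}\begin{pmatrix}0&\operatorname{id}\\-\operatorname{id}&-\Gamma M\end{pmatrix},\qquad \Sigma_{\beta'}A^{*}=(\beta'_{1})^{-1}\begin{pmatrix}0&-\operatorname{id}\\ \operatorname{id}&-M\Gamma\end{pmatrix},
\end{equation*}
so that the off-diagonal blocks cancel and, because $M=m\operatorname{id}$ commutes with $\Gamma$, the sum equals $-(2m\gamma/\beta'_{1})\,\Gamma'$ where $\Gamma':=\operatorname{diag}(0,\Gamma/\gamma)$. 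Since $K'K'^{*}=(2m\gamma/\beta'_{1})\Gamma'$, the Lyapunov identity follows. This justifies the formula \eqref{eq:Sigmabeta}.

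Third (and this is the main computation), I verify that $\Pi=\operatorname{diag}(0,\tfrac{\beta'_{1}-\beta_{1}}{2}M^{-1})$ solves \eqref{Pavon_Pi_equation} with the above $\Sigma_{\beta'}$ and with the noise level of the original system at temperature $\beta_{1}$, for which $KK^{*}=(2m\gamma/\beta_{1})\Gamma'$. Since $BB^{*}=KK^{*}=(2m\gamma/\beta_{1})\Gamma'$, one computes
\begin{equation*}
BB^{*}\Pi\,\Sigma_{\beta'}=\frac{m\gamma(\beta'_{1}-\beta_{1})}{\beta_{1}\beta'_{1}}\,\Gamma',
\end{equation*}
because $\Pi\Sigma_{\beta'}=\frac{\beta'_{1}-\beta_{1}}{2\beta'_{1}}\operatorname{diag}(0,\operatorname{id})$ and $\Gamma'\operatorname{diag}(0,\operatorname{id})=\Gamma'$. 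Symmetry of $\Sigma_{\beta'},\Pi,BB^{*}$ and of the resulting block implies $\Sigma_{\beta'}\Pi BB^{*}$ equals the same expression, so expanding \eqref{Pavon_Pi_equation} and combining with the Lyapunov identity from step two gives
\begin{equation*}
\Big[-\tfrac{2m\gamma}{\beta'_{1}}+\tfrac{2m\gamma}{\beta_{1}}-\tfrac{2m\gamma(\beta'_{1}-\beta_{1})}{\beta_{1}\beta'_{1}}\Big]\Gamma'=0,
\end{equation*}
and the scalar in brackets vanishes after the identity $(\beta'_{1}-\beta_{1})/(\beta_{1}\beta'_{1})=1/\beta_{1}-1/\beta'_{1}$.

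The main obstacle is the bookkeeping in step three; once one notices that $B=K$ so $BB^{*}=KK^{*}$, that $\Gamma'$ commutes with the diagonal projector $\operatorname{diag}(0,\operatorname{id})$, and that $M$ is a multiple of the identity so all the relevant matrices commute, everything collapses to the one scalar identity above. The only delicate point worth flagging explicitly is that the symmetric temperature assumption $\beta_{1}=\beta_{N}$, $\beta'_{1}=\beta'_{N}$ is what makes $KK^{*}$ (and hence $BB^{*}\Pi$) proportional to $\Gamma'$; without it the same ansatz would not balance the boundary entries individually.
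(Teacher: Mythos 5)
Your computations for the symmetric-temperature case are correct and match the paper's formulas: the block verification of the Lyapunov identity for $\Sigma_{\beta'}$, the observation that $B=K$ so $BB^{*}=KK^{*}=(2m\gamma/\beta_1)\Gamma'$, and the reduction of \eqref{Pavon_Pi_equation} to the scalar identity $1/\beta_1-1/\beta_1'=(\beta_1'-\beta_1)/(\beta_1\beta_1')$ all check out. Where the paper \emph{derives} $\Pi$ by writing it as a general symmetric block matrix and solving the $(12)$ and $(22)$ blocks separately, you \emph{verify} the stated $\Pi$ directly using commutativity of $M=m\,\mathrm{id}$, $\Gamma$ and the projector $\operatorname{diag}(0,\mathrm{id})$; that is a legitimate and somewhat slicker route to the same conclusion.

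However, there are two gaps. First, the opening claim of the proposition asserts existence of a steering control for \emph{arbitrary} target temperatures $\beta'=(\beta_1',\beta_N')$, not only the symmetric case $\beta_1'=\beta_N'$. Your plan settles existence only by exhibiting $\Pi$ in the symmetric case, so the asymmetric case is left unproven. The paper handles it differently: it invokes the sufficient condition of \cite[Theorem 4]{CGP2} in the form $\operatorname{im}(B)\subset\operatorname{im}(K)$ together with solvability of $A\Sigma_{\beta'}+\Sigma_{\beta'}A^{*}+KK^{*}+BX^{*}+XB^{*}=0$ for \emph{some} $X$, and then constructs a diagonal $X_{\delta}$ supported on the two boundary momentum entries so that $BX_{\delta}^{*}+X_{\delta}B^{*}$ absorbs the (diagonal, boundary-supported) difference $K_{\beta'}K_{\beta'}^{*}-K_{\beta}K_{\beta}^{*}$; combined with the Lyapunov equation \eqref{eq:Lyapu} at temperature $\beta'$ this gives existence for any $\beta'$. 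You would need to add an argument of this type (or otherwise solve \eqref{Pavon_Pi_equation} for asymmetric $\beta'$). Second, admissibility of $\Pi$ in the sense of \cite{CGP2} requires that the closed-loop matrix $A-BB^{*}\Pi$ be Hurwitz; you call the feedback ``stabilising'' but never verify this. The paper does so explicitly (noting that $-KK^{*}\Pi$ is diagonal and arguing from the Hurwitz property of $A$); without that check the feedback is not known to actually drive the system to the claimed invariant distribution.
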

\begin{proof}
A sufficient condition \cite[Theorem $4$]{CGP2} to be able to reach a state $\mathcal N(0,\Sigma_{\beta'})$ is that $\operatorname{im}(B) \subset \operatorname{im}(K)$ and $\Sigma$ solves the Lyapunov equation
\[ \Sigma_{\beta'} A^*+A\Sigma_{\beta'}+KK^*+BX^{*}+XB^{*}=0 \]
for some $X.$
We thus define diagonal matrices $X_{\delta}$ for $\delta_1,\delta_N \in \RR$ by
\begin{equation}
\begin{split}
X_{\delta}&:=\operatorname{diag} ( \underbrace{0,...,0}_{n \text{ times}}, \delta_1,\underbrace{0,...,0}_{n-2 \text{ times}}, \delta_N).
\end{split}
\end{equation}
It is then obvious that for a suitable choice of $\delta$ and any other temperature $\beta'=(\beta_1',\beta_N')$ at the terminal ends of the chain we have due to \eqref{eq:Lyapu}
\[ A\Sigma_{\beta'}+\Sigma_{\beta'}A^{*} +K_{\beta'}K_{\beta'}^* =0  \]
such that by choosing $\delta$ such that
 \[A K_{\beta}K_{\beta}^{*}+ B_{\beta}X_{\delta}^{*}+X_{\delta}B_{\beta}^{*} = K_{\beta'}K_{\beta'}^{*} \]
where we used the subscript $\beta$ to emphasize the temperature profile used in the respective matrix.
 This implies that the uncontrolled chain of oscillators \eqref{eq:system of SDEs} with equilibrium state \eqref{eq:mu} and temperature $\beta$ can be steered into the equilibrium state \eqref{eq:mu} for any other temperature $\beta'.$
 
 \medskip
 
 The form of the covariance matrix \eqref{eq:Sigmabeta} can be directly verified by inserting it into \eqref{eq:Lyapu}.

To verify \eqref{eq:Pi}, we use the fluctuation-dissipation relation $\sigma \sigma^{*} = \frac{2}{\beta_1} M \Gamma$ and write the symmetric matrix $\Pi$ as a block matrix 
\[\Pi = \begin{pmatrix} \Pi_{11} & \Pi_{12} \\ \Pi_{21} & \Pi_{22} \end{pmatrix},\]we then get
\begin{equation*}
\begin{split}
&\left(\begin{pmatrix} 0 & M^{-1} \\ -S & -\Gamma \end{pmatrix} - \begin{pmatrix} 0 & 0 \\ 0 & \sigma \sigma^{*} \end{pmatrix} \begin{pmatrix} \Pi_{11} & \Pi_{12} \\ \Pi_{21} & \Pi_{22} \end{pmatrix} \right) \begin{pmatrix} S^{-1} & 0 \\ 0 & M \end{pmatrix} \\
&\ + \begin{pmatrix} S^{-1} & 0 \\ 0 & M \end{pmatrix} \left( \begin{pmatrix} 0 & -S \\ M^{-1} & -\Gamma \end{pmatrix} -  \begin{pmatrix} \Pi_{11}^{*} & \Pi_{21}^{*} \\ \Pi_{12}^{*} & \Pi_{22}^{*} \end{pmatrix} \begin{pmatrix} 0 & 0 \\ 0 & \sigma \sigma^{*} \end{pmatrix}  \right) = -\beta_1' \begin{pmatrix} 0 & 0 \\ 0 & \sigma \sigma^{*} \end{pmatrix}
 \end{split}
\end{equation*}
which reduces to
\begin{equation*}
\begin{split}
& \begin{pmatrix} 0 & M^{-1} \\ -S-\sigma \sigma^{*} \Pi_{21} & -\Gamma - \sigma \sigma^{*} \Pi_{22}  \end{pmatrix}    \begin{pmatrix} S^{-1} & 0 \\ 0 & M \end{pmatrix} \\
&+ \begin{pmatrix} S^{-1} & 0 \\ 0 & M \end{pmatrix}   \begin{pmatrix} 0 & -S-\Pi_{21}^{*}\sigma\sigma^{*} \\ M^{-1} & -\Gamma - \Pi_{22}^{*}\sigma \sigma^{*}   \end{pmatrix} \\
&= - \beta_1'\begin{pmatrix} 0 & 0 \\ 0 & \sigma \sigma^{*} \end{pmatrix}.
  \end{split}
\end{equation*}
From the block $(12)$ we get
\begin{equation*}
M^{-1} M - S^{-1}S - S^{-1} \Pi_{21}^{*} \sigma \sigma^{*} = 0 \text{ such that we can choose } \Pi_{21} = 0.
\end{equation*}
From the block $(22)$ we get
\begin{equation*}
 \beta_1'^{-1}\left(-\Gamma M - \sigma \sigma^{*} \Pi_{22} M - M \Gamma - M \Pi_{22}^{*} \sigma \sigma^{*} \right)= - \sigma \sigma^{*}.
\end{equation*}
By symmetry, $\Pi_{12}=0$. One can check that
\begin{equation*}
\Pi_{22}  = \frac{(\beta_1'-\beta_1)}{2}M^{-1}.
\end{equation*}
At last, we may then choose $\Pi_{11}=0$ since this matrix does not enter in the Lyapunov equation. 

\medskip

To see that our choice of $\Pi$ is admissible it remains to verify that $A-KK^*\Pi$ is Hurwitz. This however follows immediately since $A$ is Hurwitz and $-KK^*\Pi$ is diagonal with non-positive entries.

\end{proof}
\par\bigskip
\begin{rem}
If one wants to solve \eqref{Pavon_Pi_equation} for a general covariance matrix $\Sigma$, vectorization can be used to get 
\begin{align*}
\operatorname{vec}(A \Sigma + \Sigma A^{*} + KK^{*}) &= \operatorname{vec}(BB^{*} \Pi \Sigma + \Sigma \Pi^{*} BB^{*})\\
&= (\Sigma \otimes BB^{*}) \operatorname{vec}(\Pi) + (BB^{*} \otimes \Sigma) \operatorname{vec}(\Pi^{*}) \\
&= (\Sigma \otimes BB^{*} + BB^{*} \otimes \Sigma) \operatorname{vec}(\Pi),
\end{align*}
since we assume $\Pi$ to be symmetric. Note that $\Pi$ is admissible only if the rank condition
\begin{equation*}
    \operatorname{rank}\begin{pmatrix} A\Sigma + \Sigma A^{*} + K K^{*} & B \\ B^{*} & 0 \end{pmatrix} = \operatorname{rank}\begin{pmatrix} 0 & B \\ B^{*} & 0 \end{pmatrix}
\end{equation*}
holds and $A - BB^{*} \Pi$ is Hurwitz (see \cite{CGP2}).
\end{rem}

\subsection{Optimal control meets balanced truncation}

We now discuss how to use BT to steer subsystems into a designated steady state.

We again consider the high-dimensional Ornstein-Uhlenbeck process \eqref{eq:system of SDEs}, for which we have discussed in Subsection \ref{sec:steer} the convergence of
\[ \Sigma_t = \mathbb E(Z_t Z_t^{*}) \]
to a designated covariance matrix $\Sigma>0,$ under certain conditions.

Now, we want to study the case where we only want to find a control that maintains a certain covariance matrix $\mathbb{R}^{r \times r}\ni \Sigma_{rr}>0$ for an $r \ll d$-dimensional projection of our original system. In this case, the above method does not apply immediately. 

\medskip

To be precise, we are interested in reaching the sub-covariance matrix $\Sigma_{rr}$ as the limiting covariance matrix of
\[  \mathcal Q\Sigma_t  \mathcal Q^{*} =\mathcal Q\mathbb E(Z_t Z_t^{*})\mathcal Q = \mathbb E((\mathcal QZ_t) (\mathcal QZ_t)^{*}),\]
where $\mathcal Q$ is a suitable projection matrix.

\medskip

We can now first reduce the model to $r$ dimensions (recall that $r$ is the rank of $\mathcal Q$) using BT with observability matrix $C=\mathcal Q$ and then apply the method described in Subsection \eqref{sec:steer} to the reduced system
$(\widetilde C,\widetilde A,\widetilde K,\widetilde B)$ by using that 
\[ \mathcal Q\Sigma_t \mathcal Q^{*} \approx \mathbb E\left(\widetilde C \widetilde Z_t (\widetilde C \widetilde Z_{t})^{*}\right).\]
More precisely, it follows that 
\begin{equation}
\begin{split}
&\left\lVert \mathbb E\left( (CZ_t)(CZ_t)^* \right)- \mathbb E\left( (\widetilde C\widetilde Z_t)(\widetilde C\widetilde Z_t)^*\right) \right\rVert \\
&\le \left\lVert \mathbb E\left( (CZ_t-\widetilde C\widetilde Z_t)(CZ_t)^*\right) \right\rVert+\left\lVert \mathbb E\left( (\widetilde C\widetilde Z_t)((CZ_t)^*-(\widetilde C\widetilde Z_t)^*)\right) \right\rVert \\
&\le \Vert  CZ_t-\widetilde C\widetilde Z_t \Vert_{L^2(\Omega)} \left(\Vert  CZ_t\Vert_{L^2(\Omega)}+ \left\lVert \widetilde C\widetilde Z_t \right\Vert_{L^2(\Omega)} \right).
\end{split}
\end{equation}
Thus, the covariance matrix $\E\left(\widetilde Z_{t}\widetilde Z_{t}^{*}\right)$ that the reduced process $\widetilde Z_{t}$ is supposed to maintain is the normal distribution \eqref{eq:states} with (formal inverse) $\Sigma^{-1} = \widetilde C^* \Sigma_{rr}^{-1}\widetilde C$. If $\Sigma^{-1}$ has full rank, and thus $\Sigma^{-1}$ is the inverse of an actual matrix $\Sigma$, then this auxiliary distribution for the reduced system can be used to compute an optimal control, as described in Section \ref{sec:steer}, for the full system.

\medskip

We illustrate the above ideas in the following example.

\begin{ex}[Target distribution of outmost oscillators.]
Let us say we want to prescribe the covariance matrix of the subsystem containing only the leftmost and rightmost oscillators and accordingly choose $\mathcal Q \in \mathbb{R}^{4\times d}$, $d=2N,$ with $\mathcal Q_{11} = 1, \mathcal Q_{2,N} = 1, \mathcal Q_{3, N+1} = 1, \mathcal Q_{4, 2N} = 1$, to retain position and momentum variables, and choose all other $\mathcal Q_{ij} = 0$. We can then employ BT to obtain a reduced system associated with the original system
\begin{equation}
\begin{split}
dZ_t &= (A Z_t+Bu_t) \ \mathrm dt + K_{\beta} \ \mathrm dW_t\\
Y_t&=\mathcal QZ_t.
\end{split}
\end{equation}
The reduced system is of lower dimension $r$ with $r  \ll d$,
\begin{equation}
\begin{split}
d\widetilde{Z}_t &= (\widetilde A \widetilde Z_t+\widetilde Bu_t) \ \mathrm dt + \widetilde K_{\beta} \ \mathrm dW_t\\
\widetilde Y_t&=\widetilde{\mathcal Q}\widetilde Z_t.
\end{split}
\end{equation}
To run a numerical simulation we choose the sub-covariance to be 
\begin{align}
    \Sigma_{kk} = S_{kk} + S_{kk}^{*}, \quad S_{kk} =\operatorname{diag}(3, \dots, 3) + (|a_{ij}|), \quad a_{ij} \sim \mathcal{N}(0, 1)
\end{align}
and compute the optimal control as described above. We have realized that it is important to actually check the speed of convergence as \cite{CGP2} does not say anything about the time needed to be ``close'' to the stationary distribution. This can for instance be done by looking a the smallest real part of the eigenvalues of the matrix $A - BB^* \Sigma$. To evaluate the closeness to our desired target distribution, we compare the empirical covariance $\hat{\Sigma}_{rr,t}$ to the desired covariance $\Sigma_{rr}$ by means of the scaled Frobenius norm $\frac{1}{d}\|\hat{\Sigma}_{rr,t} -  \Sigma_{rr}\|_F$. Figure \ref{chain_of_oscillator_BT_and_Pavon} displays this measure as a function of time by simulating $k$ different realizations of the reduced controlled process up to $T=30$. We see that we indeed get very close to the desired target, in particular if we choose $k$ large enough. The time discretization of the Euler-Maruyama scheme that we use for discretization seems to be small enough in all trials. 

\begin{figure}[H]
\centering
  \includegraphics[width=0.5\linewidth]{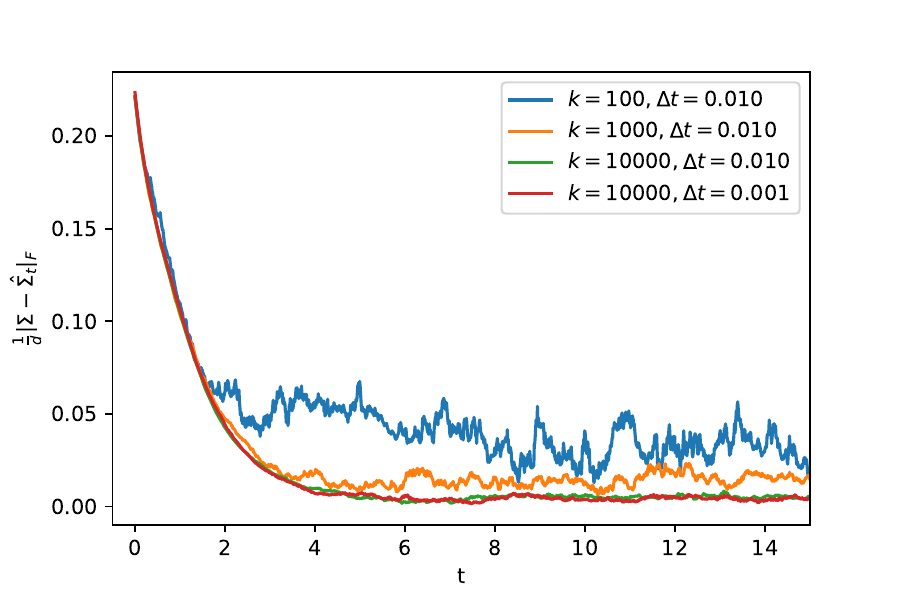}
  \caption{Convergence of the reduced chain of oscillator system to the desired target distribution.}
  \label{chain_of_oscillator_BT_and_Pavon}
\end{figure}
\end{ex}
\smallsection{Acknowledgements} 
This work was partly supported by the EPSRC grant EP/L016516/1 for the University of Cambridge CDT, the CCA (S.B.), and the DFG Collaborative Research Center 1114 ``Scaling Cascades in Complex Systems'', project A05 (L.R.). 

%


\end{document}